\documentclass[11pt]{article}
\usepackage{caption}
\usepackage{graphicx}
\usepackage{subcaption}
\usepackage{bbm}
\usepackage{amsmath,amsthm,amssymb}
\usepackage{pdflscape}
\usepackage{booktabs}
\usepackage{graphicx} 
\usepackage{tikz}
\usepackage{chemformula}

\usepackage{fontawesome5}
\usepackage{adjustbox}
\usepackage[utf8]{inputenc}
\usepackage{pgfplots}
\pgfplotsset{compat=1.17} 
\usepackage[colorlinks=true,urlcolor=blue,linkcolor=blue,citecolor=blue]{hyperref}
\usepackage{setspace}
\usepackage{pict2e,color}
\usepackage{multirow}
\usepackage{booktabs}
\usepackage{amsfonts}
\usepackage{verbatim} 
\usepackage{sectsty}
\usepackage{url}
\usepackage{empheq}
\usepackage[normalem]{ulem}
\usepackage[titletoc,title]{appendix}
\usepackage[flushleft]{threeparttable}
\usepackage{soul} 
\usetikzlibrary{shapes.geometric}
\usepackage{algorithm}
\usepackage{algpseudocode}
\usepackage{tikz}
\usepackage{mathtools}
\usepackage{pbox}
\usepackage{etoolbox}
\preto\subequations{\ifhmode\unskip\fi}
\usepackage{booktabs}
\usepackage{multirow}
\usepackage{xcolor}
\usepackage[numbers]{natbib}
\usepackage{natbib}
\theoremstyle{definition}

\newtheorem{theorem}{Theorem}

\newtheorem{proposition}{Proposition}

\newtheorem{assumption}{Assumption}

\allowdisplaybreaks
\usetikzlibrary{positioning, calc}
\tikzstyle{startstop} = [rectangle, rounded corners, minimum width=3cm, minimum height=1cm,text centered, draw=black, fill=red!30]
\tikzstyle{process} = [rectangle, minimum width=3cm, minimum height=1cm, text centered, draw=black, fill=orange!30]
\tikzstyle{arrow} = [thick,->,>=stealth]

\usepackage{threeparttable}

\usepackage{natbib}
\setcitestyle{authoryear} 

\title{Contextual Stochastic Optimization with Decision-Dependent Uncertainty via Nonparametric Learning}

\author{Huangrong Sun, Xian Yu\thanks{Corresponding author}\\
\footnotesize{The Ohio State University, Columbus, OH (USA)}\\
\footnotesize{\url{sun.3394@osu.edu}, \url{yu.3610@osu.edu}}
}
\date{}
\pgfplotsset{compat=1.18}

\begin{document}

\maketitle
\vspace*{-0.1in}
\begin{abstract}
    We study a general decision-dependent contextual stochastic program (DD-CSP) in which uncertainty depends on both exogenous contextual information and endogenous decisions. To learn the potentially complex dependence of uncertainty on decisions and contextual information, we employ several nonparametric regression models, including \(k\) nearest neighbors (\(k\)NN), classification and regression trees (CART), and ReLU neural networks. To account for estimation errors in predicting the uncertainty, we adopt an empirical residuals-based decision-dependent sample average approximation (ER-DD-SAA) framework, which adds empirical residuals to the point predictions from the learned regression models. 
For each nonparametric regression model, we develop exact mixed-integer programming (MIP) representations that can be seamlessly embedded within the ER-DD-SAA framework. Since \(k\)NN leads to the most computationally challenging MIP model and has been less studied in the literature, we derive both a pairwise distance comparison formulation and an equivalent but more compact bilevel formulation. For two-stage ER-DD-SAA problems with $k$NN, we further propose a tailored decomposition algorithm, named BD-CG, that combines Bender's decomposition
with constraint generation. 
Under suitable assumptions, we prove that the proposed BD-CG converges to a global optimum within a finite number of iterations. From a statistical perspective, we establish the consistency and asymptotic optimality of ER-DD-SAA with all three nonparametric regression models under mild regularity conditions. Numerical experiments on a newsvendor problem with pricing and a two-stage facility location problem demonstrate that the ER-DD-SAA model with nonparametric learning consistently outperforms a parametric benchmark in out-of-sample performance and the proposed reformulations and algorithm substantially improve computational tractability.

\noindent
{\bf Keywords:} Contextual stochastic optimization, Decision-dependent uncertainty, Nonparametric regression, Data-driven optimization, Empirical residuals-based sample average approximation
\end{abstract}



\section{Introduction}
We consider the following decision-dependent contextual stochastic program (DD-CSP):
\begin{align}\label{eq:dd-csp}
    \psi^\ast(\boldsymbol{w}) := 
    \min_{\boldsymbol{z} \in \mathcal{Z}} \ 
    \mathbb{E}_{\boldsymbol{Y}}\left[
    c(\boldsymbol{z},  \boldsymbol Y) 
    \mid \boldsymbol{Z}=\boldsymbol{z},\boldsymbol{W}=\boldsymbol{w}
    \right],
\end{align}
where \(\boldsymbol{z} \in \mathcal{Z}\subseteq \mathbb{R}^{d_z}\) denotes the decision vector, \( \boldsymbol{w}\in \mathcal{W} \subseteq \mathbb{R}^{d_w}\) denotes the contextual information, also known as \textit{covariates}, \textit{features}, or \textit{side information}, \(\boldsymbol{Y} \in \mathcal{Y} \subseteq \mathbb{R}^{d_y}\) represents the uncertainty that depends on both decision \(\boldsymbol{z}\) and 
covariate \(\boldsymbol{w}\), and $c(\boldsymbol{z}, \boldsymbol{Y})$ denotes the cost function. Given an observation of the covariate $\boldsymbol{W}=\boldsymbol{w}$, the expectation in \eqref{eq:dd-csp} is taken with respect to the conditional distribution of $\boldsymbol{Y}$ conditioned on the decision $\boldsymbol{z}$ to be optimized and the observed covariate $\boldsymbol{w}$.

Such DD-CSPs arise naturally in many real-world applications, where uncertainty is influenced by both exogenous contextual information and endogenous decisions, a phenomenon known as \textit{decision-dependent uncertainty} (DDU). For example, in a newsvendor problem with pricing, the decision-maker (DM) chooses the selling price and order quantity to minimize the expected cost under uncertain demand. In practice, the pricing decision directly affects customer demand, where a higher selling price may suppress demand, and a lower selling price may stimulate purchases \citep{liu2022coupled,bertsimas2020predictive}. At the same time, exogenous covariates, such as customer income, seasonal conditions, and local market characteristics, may also affect demand. Therefore, the uncertain demand depends jointly on the pricing decision and the contextual information. As another example, consider a two-stage facility location problem. In the first stage, the DM decides which facilities to open from a set of candidate locations. In the second stage, after customer demand is realized, shipments from the open facilities are allocated to customers to satisfy demand while minimizing the recourse cost. Facility location decisions may influence demand through service accessibility: opening a closer facility makes the service more convenient for customers to access, thereby increasing customer demand \citep{basciftci2021distributionally, yu2022multistage, mahmutougullari2023robust, li2024adaptive}. Meanwhile, contextual factors, such as population density, income level, and other regional characteristics, also affect demand \citep{sun2026contextual}. Therefore, customer demand depends jointly on the facility location decisions and the contextual information.

With the growing availability of data, uncertainty can be estimated from historical observations, albeit subject to prediction error. To account for both prediction errors and the aforementioned DDU, we adopt an empirical residuals-based decision-dependent sample average approximation (ER-DD-SAA) framework to approximate the DD-CSP. Specifically, we first train a regression model using historical data and compute the corresponding empirical residuals. Given a new realization of the contextual information, we then construct decision-dependent scenarios by adding these empirical residuals to the point prediction generated by the learned regression model, and solve the resulting SAA problem. Since the relationship between uncertainty, decisions, and non-decision covariates can be highly nonlinear and complex, we employ nonparametric learning methods, including $k$ nearest neighbors ($k$NN), classification and regression trees (CART), and rectified linear unit (ReLU) neural networks (NNs). Unlike the decision-independent setting, where uncertainty scenarios can be estimated solely from covariates prior to solving the optimization problem, the decision variables now become part of the input to the nonparametric regression models. Consequently, the learned regression models must be embedded directly into the downstream optimization problem. Although these nonparametric regression models can be trained efficiently, representing the learned regressors within the ER-DD-SAA framework and solving the resulting optimization problem require new mathematical formulations and efficient solution algorithms.


To address these challenges, we develop exact mixed-integer programming (MIP) formulations for ER-DD-SAA with the three nonparametric regressors described above. To this end, we assume that the feasible region $\mathcal{Z}$ is linear programming (LP)- or mixed-integer linear programming (MILP)-representable. We further assume that the projection onto $\mathcal{Y}$ admits an MILP representation, e.g., when the support set $\mathcal{Y}$ is the real space $\mathbb{R}^{d_y}$, a half-space, or a hyperrectangle. Under these assumptions, Table~\ref{tab:method-summary} summarizes the resulting reformulation types---either MILP or mixed-integer nonlinear programming (MINLP)---for both objective uncertainty and right-hand-side (RHS) uncertainty.

\begin{table}[htbp]
\centering
\caption{Formulation types under different nonparametric regressors with objective and RHS uncertainty.}
\label{tab:method-summary}
\begin{tabular}{lcc}
\hline
Regressor
& Objective Uncertainty\textsuperscript{a}
& RHS Uncertainty\textsuperscript{b}
\\ \hline
$k$NN    & MILP  & MILP \\
CART     & MILP  & MILP \\
ReLU NNs & MINLP & MILP \\
\hline
\end{tabular}

\vspace{2pt}
\begin{tabular}{@{}l@{\ }l@{}}
\textsuperscript{a} & Single-stage or two-stage stochastic (MI)LP with objective uncertainty.\\
\textsuperscript{b} & Two-stage stochastic (MI)LP with RHS uncertainty.
\end{tabular}
\end{table}

\subsection{Related Work}

In recent years, several paradigms have emerged in contextual stochastic optimization to effectively incorporate predictive information from data into downstream optimization problems. These include ``smart predict-then-optimize'' \citep{elmachtoub2022smart, el2019generalization, estes2023smart}, estimate-then-optimize \citep{bertsimas2020predictive, bertsimas2023dynamic}, feature-to-decision learning \citep{ban2019big, bertsimas2022data, zhang2024optimal, qi2024learning}, and empirical residuals-based approaches \citep{ban2019dynamic, liu2022coupled, kannan2020residuals,kannan2022data}. We refer readers to \cite{qi2022integrating} and \cite{sadana2024survey} for comprehensive reviews of this area. 
While these existing works primarily consider decision-independent uncertainty, in many real-world applications, decisions themselves can also influence the uncertainty distribution, giving rise to DDU. Motivated by this observation, a growing body of work has incorporated DDU into a variety of applications, including facility location \citep{basciftci2021distributionally, yu2022multistage, liu2022coupled, mahmutougullari2023robust}, newsvendor problems with pricing \citep{liu2023solving, bertsimas2020predictive}, and appointment scheduling \citep{homem2022simulation}. Although these models capture endogenous uncertainty, they typically do not account for exogenous contextual information.
To bridge this gap, recent work has begun to study DD-CSP, in which uncertainty depends jointly on decisions and contextual information. \citet{bertsimas2020predictive} proposed a reweighted SAA framework in which scenario weights are determined jointly by endogenous decisions and exogenous covariates, and solved the resulting problem by enumerating all candidate decisions. \citet{cao2024statistical} studied DD-CSP under both predict-then-optimize and estimate-then-optimize paradigms, allowing for arbitrary dependence structures and establishing non-asymptotic guarantees on approximation error and decision regret. More recently, \citet{sun2026contextual} extended the empirical residuals-based SAA (ER-SAA) framework proposed in \cite{kannan2022data} to the decision-dependent setting, namely, ER-DD-SAA, for an electric vehicle charging station location problem. Specifically, the authors assumed that customer demand depends jointly on facility location and capacity decisions as well as contextual information. They estimated demand using both parametric and nonparametric regression models and incorporated the learned predictors into an ER-DD-SAA framework. However, their paper focuses on a specific application, and their one-step nonparametric learning relies on Gurobi Machine Learning \citep{gurobiMLfeatures}, which is limited to predictors supported by the package (e.g., $k$NN is not supported) and does not provide explicit MIP formulations. Our paper addresses these limitations by considering a general DD-CSP and deriving exact MIP formulations for ER-DD-SAA with $k$NN, CART, and ReLU NNs. We further establish statistical guarantees for ER-DD-SAA under these three nonparametric regression models.

We next review the nonparametric regression models considered in this paper. Nonparametric regression models are widely used for their ability to capture complex, nonlinear relationships without imposing a prespecified functional form \citep{hardle1990applied}. Among them, the $k$NN method predicts the response at a query point by averaging the responses of its nearest neighbors \citep{altman1992introduction}.
CART recursively partition the feature space into axis-aligned regions to approximate the underlying response function by piecewise constant functions \citep{breiman2017classification}. NNs approximate nonlinear functions by composing affine transformations with nonlinear activation functions \citep{goodfellow2016deep}. Among these activation functions, ReLU is widely used in modern NN architectures \citep{nair2010rectified}. Embedding these trained regressors into downstream optimization problems requires exact mathematical optimization formulations. A growing body of research has developed MIP formulations for trained ML models. For tree-based models, much of the literature has focused on formulating the decision tree training problem as a mixed-integer optimization problem \citep{bertsimas2017optimal, aghaei2025strong}. For ReLU NNs, numerous studies have derived strong MIP formulations for the ReLU activation function \citep{tong2024optimization, tong2025optimization, pham2025optimization, badilla2023computational, anderson2020strong}. In contrast, exact MIP formulations for trained $k$NN regressors have received comparatively little attention.

At the intersection of CSP, DDU, and nonparametric regression, only a few studies have considered this setting.  \citet{liu2023solving} studied decision-dependent newsvendor problems and incorporated nonparametric models such as $k$NN and kernel regression into a reweighted SAA framework by developing exact reformulations together with an approximate gradient descent algorithm for scalability. However, their approach is limited to continuous decision variables, as the gradient-based algorithm is not applicable to discrete decisions. More recently, \cite{liu2026solving} studied contextual chance-constrained programs under
DDU based on a reweighted SAA framework combined with cluster-based
nonparametric regression models (e.g., $k$NN and CART). Their solution approach, however, requires the decisions that affect the
uncertainty to take values in a finite discrete set so that cluster
memberships can be pre-computed. In contrast, our approach accommodates both continuous and discrete decision variables and is built upon the ER-DD-SAA framework, which naturally supports a broad class of nonparametric regression models. 

\subsection{Summary of main contributions}
Our key contributions are summarized as follows:
\begin{enumerate}
    \item We study a general DD-CSP problem in which the uncertainty depends jointly on decisions and covariates. We propose a unified ER-DD-SAA framework that can integrate a broad range of nonparametric regression models into downstream stochastic optimization.
    \item We develop exact MIP reformulations for ER-DD-SAA with $k$NN, CART, and ReLU NNs under appropriate structural conditions. For ER-DD-SAA with \(k\)NN, we propose a pairwise distance comparison formulation and an equivalent but more compact bilevel formulation.
    \item For two-stage ER-DD-SAA with \(k\)NN, we further develop a tailored decomposition algorithm, named BD-CG, that combines Bender's decomposition with constraint generation, and establish its finite convergence to global optimality under suitable conditions.
    \item We establish statistical guarantees for the proposed ER-DD-SAA framework with the three nonparametric regression models by proving consistency and asymptotic optimality under mild conditions.
    \item We conduct numerical experiments on a newsvendor problem with pricing and a two-stage facility location problem to evaluate the computational performance and solution quality of the proposed methods.
\end{enumerate}

The remainder of this paper is organized as follows. Section~\ref{sec:preliminaries} reviews the ER-DD-SAA framework and the three nonparametric regression models considered in this paper. Section~\ref{sec:MIP-formulations} develops exact MIP formulations for ER-DD-SAA with $k$NN, CART, and ReLU NNs. Section~\ref{sec:knn_tractability} presents a tailored decomposition algorithm for two-stage ER-DD-SAA with \(k\)NN to enhance computational tractability. Section~\ref{sec:consistency} establishes statistical guarantees for the ER-DD-SAA framework under mild conditions. Finally, Section~\ref{sec:experiments} reports numerical results on a newsvendor problem with pricing and a two-stage facility location problem, demonstrating the effectiveness of the proposed approaches.

\section{Preliminaries}\label{sec:preliminaries}
This section presents the ER-DD-SAA framework and introduces the three nonparametric regression models considered in this paper.

\subsection{ER-DD-SAA}
We assume that the uncertainty \(\boldsymbol{Y}\) has the following relationship: 
\(
    \boldsymbol{Y} = Q^\ast(\boldsymbol{z}, \boldsymbol{w}) + \boldsymbol{\epsilon} \in  \mathcal{Y},
\)
where $Q^\ast(\cdot, \cdot)$ is the true (unknown) regression function, and $\epsilon$ is the zero-mean additive error term. Under this assumption, the original DD-CSP problem \eqref{eq:dd-csp} is equivalent to
\begin{align}\label{eq:er-dd-saa-true}
    \psi^\ast(\boldsymbol{w})
    := \min_{\boldsymbol{z} \in \mathcal{Z}}
    \left\{
      \upsilon(\boldsymbol{z}, \boldsymbol{w})
    := \mathbb{E}_{\boldsymbol{\epsilon}}\Big[c\Big(\boldsymbol{z},  Q^\ast(\boldsymbol{z}, \boldsymbol{w}) + \boldsymbol{\epsilon} \Big) \Big]
    \right\}.
\end{align}
Here, we assume that $\mathcal{Z}$ is a nonempty, compact set that is LP- or MILP-representable, the objective function $v(\cdot,\boldsymbol{w})$ is lower semicontinuous on $\mathcal{Z}$ for every $ \boldsymbol{w} \in\mathcal{W}$, and $\mathbb{E}_{\boldsymbol{\epsilon}}\left[|c\left(\boldsymbol{z},  Q^\ast(\boldsymbol{z}, \boldsymbol{w}) + \boldsymbol{\epsilon} \right) |\right]<+\infty$ for every $\boldsymbol{z} \in \mathcal{Z}$ and $ \boldsymbol{w} \in\mathcal{W}$.

Given a historical dataset $\mathcal{D}_N = \{(\boldsymbol{z}^i, \boldsymbol{w}^i, \boldsymbol{y}^i)\}_{i=1}^N$ with joint observations of $(\boldsymbol{Z},\boldsymbol{W},\boldsymbol{Y})$, if the ground truth regression function $Q^\ast$ is known, then we can construct the true residuals as $\boldsymbol{\epsilon}^i := \boldsymbol{y}^i - Q^\ast(\boldsymbol{z}^i, \boldsymbol{w}^i), \, \forall i \in [N]$ and build the following full-information decision-dependent SAA
\begin{align}\label{eq:fi-er-dd-saa}
    \psi^\ast_{N,\mathrm{ER}}(\boldsymbol{w})
    := \min_{\boldsymbol{z} \in \mathcal{Z}}
    \left\{
      \upsilon^\ast_N(\boldsymbol{z}, \boldsymbol{w})
    := \frac{1}{N} \sum_{i=1}^N c\Big(\boldsymbol{z},  Q^\ast(\boldsymbol{z}, \boldsymbol{w}) + \boldsymbol{\epsilon}^i \Big) 
    \right\}.
\end{align}
However, in practice, $Q^\ast$ is often unknown. We therefore estimate it using a nonparametric regression model $\hat{Q}_N(\cdot, \cdot)$ trained on the dataset $\mathcal{D}_N$. 
We then compute the empirical residuals $\hat{\boldsymbol{\epsilon}}^i := \boldsymbol{y}^i - \hat{Q}_N(\boldsymbol{z}^i, \boldsymbol{w}^i), \, \forall i \in [N]$. For a new covariate $\boldsymbol{w}$ and any decision $\boldsymbol{z}$, we build uncertainty scenarios by adding these empirical residuals to the point prediction $\hat{Q}_N(\boldsymbol{z}, \boldsymbol{w})$ and construct the following ER-DD-SAA
\begin{align}\label{eq:er-DD-SAA}
    \hat{\psi}_{N,\mathrm{ER}}(\boldsymbol{w})
    := \min_{\boldsymbol{z} \in \mathcal{Z}}
    \left\{
    \hat{  \upsilon}_N(\boldsymbol{z}, \boldsymbol{w})
    := \frac{1}{N} \sum_{i=1}^N c\Big(\boldsymbol{z}, \operatorname{Proj}_{\mathcal{Y}}\big( \hat{Q}_N(\boldsymbol{z}, \boldsymbol{w}) + \hat{\boldsymbol{\epsilon}}^i \big) \Big)
    \right\},
\end{align}
where \(\operatorname{Proj}_{\mathcal{Y}}(\cdot)\) projects each scenario onto the support \(\mathcal{Y}\) to ensure validity. We focus on projections that are MILP-representable, e.g., when $\mathcal{Y}$ is a half space or contains simple box constraints, so that embedding the projection preserves the MILP structure of the resulting reformulation. We denote the optimal solution set of the true problem~\eqref{eq:er-dd-saa-true} as \(\mathcal{F}^\ast(\boldsymbol{w})\), and the optimal solution set to the ER-DD-SAA problem~\eqref{eq:er-DD-SAA} as \(\hat{\mathcal{F}}_{N,\text{ER}}(\boldsymbol{w})\).

\subsection{Nonparametric regression}
We next review the three nonparametric regression models used in this paper.

$k$NN regression~\citep{altman1992introduction} predicts the response at a new input by averaging the responses of its $k$ nearest neighbors in the training dataset, i.e, the $k$NN prediction at a new input $(\boldsymbol{z}, \boldsymbol{w})$ is given by $\hat{Q}_N^{k\mathrm{NN}}(\boldsymbol{z}, \boldsymbol{w}) = \frac{1}{k} \sum_{i \in \mathcal{N}_k(\boldsymbol{z}, \boldsymbol{w})} \boldsymbol{y}^i$,
where $\mathcal{N}_k(\boldsymbol{z}, \boldsymbol{w})$ is the index set of the $k$ nearest neighbors of $(\boldsymbol{z}, \boldsymbol{w})$, defined by
\(
\mathcal{N}_k(\boldsymbol{z}, \boldsymbol{w}) := \left\{ i \in [N] : \sum_{j=1}^{N} \mathbb{I}\big\{ \|(\boldsymbol{z}, \boldsymbol{w}) - (\boldsymbol{z}^j, \boldsymbol{w}^j)\| \leq \|(\boldsymbol{z}, \boldsymbol{w}) - (\boldsymbol{z}^i, \boldsymbol{w}^i)\| \big\} \leq k \right\},
\)
and $\mathbb{I}\{\cdot\}$ is the indicator function.

CART~\citep{breiman2017classification} recursively partitions the feature space into axis-aligned rectangles $\mathcal{B}_r,\ r=1,\ldots, N_R$, called \textit{leaf regions} or \textit{leaf nodes}. The prediction for a new input is the average response of the training samples in the same leaf region: $\hat{Q}_N^{\mathrm{CART}}(\boldsymbol{z}, \boldsymbol{w}) = \sum_{r=1}^{N_R} \hat{y}_r \, \mathbb{I}\big\{ (\boldsymbol{z}, \boldsymbol{w}) \in \mathcal{B}_r \big\}$,
where 
$\hat{y}_r = \frac{1}{|\mathcal{B}_r|} \sum_{i : (\boldsymbol{z}^i, \boldsymbol{w}^i) \in \mathcal{B}_r} \boldsymbol{y}^i$ is the average response over the training samples in region $\mathcal{B}_r$.

ReLU NNs~\citep{goodfellow2016deep} approximate complex nonlinear functions by composing affine transformations with element-wise ReLU activations. Given an input $(\boldsymbol{z}, \boldsymbol{w})$, set $\boldsymbol{h}^{(0)} := [\boldsymbol{z}; \boldsymbol{w}]$. Each hidden layer $l \in [L]$ is computed as
$\boldsymbol{h}^{(l)} = \max\big\{ 0, \, \hat{\boldsymbol{v}}^{(l)} \boldsymbol{h}^{(l-1)} + \hat{\boldsymbol{\alpha}}^{(l)} \big\}$,
and the NN prediction is given by $\hat{Q}_N^{\mathrm{NN}}(\boldsymbol{z}, \boldsymbol{w}) = \hat{\boldsymbol{v}}^{(L+1)} \boldsymbol{h}^{(L)} + \hat{\boldsymbol{\alpha}}^{(L+1)}$, where $\hat{\boldsymbol{v}}^{(l)}$ and $\hat{\boldsymbol{\alpha}}^{(l)}$ are the trained weight matrix and bias vector at layer $l$ for $l\in [L]$.

\paragraph{Notation.}
Throughout the paper, we use bold symbols to denote vectors. For any positive integer \(N\), let \([N]:=\{1,2,\ldots,N\}\). We denote the extended real line by $\overline{\mathbb{R}} \coloneqq \mathbb{R} \cup \{-\infty, +\infty\}$. For any two nonempty sets $A, B \subseteq \mathbb{R}^n$, we define the deviation of $A$ from $B$ as  \( \mathbb{D} (A, B) := \sup_{a \in A}  \text{dist}(a, B)\) where \(\text{dist}(a, B)=\inf_{b \in B} \| a-b\|\).

\section{MIP Formulations for ER-DD-SAA with Nonparametric Learning}
\label{sec:MIP-formulations}
In this section, we propose MIP representations for nonparametric regression models within ER-DD-SAA framework. We present the MIP formulations for ER-DD-SAA with \(k\)NN in Section~\ref{sec:mip_knn}, with CART in Section~\ref{subsec:mip_cart}, and with ReLU NNs in Section~\ref{subsec:mip_nn}, respectively.

\subsection{MIP formulations for ER-DD-SAA with \(k\)NN}\label{sec:mip_knn}
We present two MIP formulations for ER-DD-SAA with \(k\)NN: a pairwise distance comparison formulation in Section~\ref{subsubsec:pairwise_distance} and a bilevel formulation with its equivalent single-level MIP reformulation in Section~\ref{subsubsec:bilevel}, respectively. A third MIP formulation based on distance ranking is presented in Appendix \ref{appendix:ranking}.

\subsubsection{Pairwise distance comparison formulation}\label{subsubsec:pairwise_distance}
Given the dataset \(\mathcal{D}_N=\{(\boldsymbol{z}^i, \boldsymbol{w}^i, \boldsymbol{y}^i)\}_{i=1}^N\),  we define the $L_1$ distance between the query point $(\boldsymbol z,\boldsymbol w)$ and data point $i$ as 
\(s_i := \left\|(\boldsymbol z,\boldsymbol w)-(\boldsymbol z^i,\boldsymbol w^i)\right\|_1,
\, \forall i\in [N].
\)
For simplicity, we use \(L_1\)-norm in this paper; however, the proposed framework can also be applied to other distance metrics that admit an MILP representation (e.g., $L_{\infty}$-norm).
For a fixed $k$, we first develop an MILP formulation that models the \(k\)-nearest-neighbor selection within ER-DD-SAA through pairwise distance comparisons. Specifically, let \( d_{ij} \) be a binary variable that indicates whether data point $i$ is farther than (or the same as) $j$ to the new query point \((\boldsymbol{z}, \boldsymbol{w})\), such that \(d_{ij}=1\) if \(s_i  \ge s_j \), and \(d_{ij}=0\) otherwise. Let \(t_i\) be a binary variable indicating whether historical observation \((\boldsymbol z^i,\boldsymbol w^i)\) belongs to the set of \(k\) nearest neighbors of the query point \((\boldsymbol z,\boldsymbol w)\). Then the
ER-DD-SAA with the \(k\)NN regression model can be recast as
follows: 
\allowdisplaybreaks
\begin{subequations}\label{eq:distance_compare_tie}
\begin{align}
\min_{\boldsymbol d, \boldsymbol{s}, \boldsymbol{t}, \boldsymbol z}\quad & \frac{1}{N}\sum_{i=1}^N
    c\!\left(
        \boldsymbol z,
        \operatorname{Proj}_{\mathcal Y}\!\left(
            \frac{1}{k}\sum_{j=1}^N t_j y^j + \hat{\epsilon}^i
        \right)
    \right) \label{eq:distance_comparison_obj}\\
    \text{s.t.} \quad 
    &\boldsymbol z \in \mathcal Z, \label{eq:decision_feasibility} \\
    & s_i = \|(\boldsymbol{z}^i, \boldsymbol{w}^i) - (\boldsymbol{z}, \boldsymbol{w}) \|_1, \quad \forall i\in [N], \label{eq:random_select_knn_p_norm} \\
    & \sum_{i=1}^N t_i = k, \label{eq:random_select_knn_sum_t} \\
    & M_1^{ij}(d_{ij} - 1) \le s_i  - s_j  \leq M_1^{ij} d_{ij}, \quad \forall i,j \in [N],\ i<j, \label{eq:random_select_knn_strict1} \\
    & k- M_2 t_i \le \sum_{i < j} d_{ij} + \sum_{i > j} (1-d_{ji})+ 1 
\le k+ M_2 (1-t_i) , \quad \forall i \in [N], \label{eq:random_select_knn_strict3} \\  
    & t_i \in \{0,1\}, \quad \forall i \in [N], \label{eq:knn_binary} \\
    & d_{ij} \in \{0,1\}, \quad \forall i, j \in [N], \ i < j. \label{eq:comparison_binary}
\end{align}
\label{eq:distance_compare}
\end{subequations} 
In the objective function \eqref{eq:distance_comparison_obj}, $\frac{1}{k}\sum_j t_j y^j$ denotes the $k$NN point prediction at the query point $(\boldsymbol{z},\boldsymbol{w})$, and $\operatorname{Proj}_{\mathcal{Y}} (\frac{1}{k} \sum_j t_j y^j+ \hat{\epsilon}^i)$ represents the uncertainty scenario $i$ in the ER-DD-SAA framework after adding the empirical residual $\hat{\epsilon}^i$ to the point prediction and projecting it onto the support set $\mathcal{Y}$. Constraints~\eqref{eq:random_select_knn_p_norm} compute the \(L_1\)-norm distance between each historical observation \((\boldsymbol{z}^i, \boldsymbol{w}^i)\) and the query point \((\boldsymbol{z}, \boldsymbol{w})\).  Constraint~\eqref{eq:random_select_knn_sum_t} enforces that exactly \(k\) observations are selected as $k$ nearest neighbors.  Constraints~\eqref{eq:random_select_knn_strict1} model pairwise distance comparisons, where \(d_{ij} = 1\) if \(s_i\ge s_j \), and $d_{ij}=0$ if $s_i\le s_j$. In the case when we have a tie, i.e., \(s_i = s_j\), constraints~\eqref{eq:random_select_knn_strict1} permit either \(d_{ij} = 0\) or \(d_{ij} = 1\). We set $d_{ii}=1$ for each $i\in [N]$. Here, $M_1^{ij}$ represents a sufficiently large constant. By triangle inequality, the tightest valid value can be set to \(M_1^{ij} = \big\|(\boldsymbol{z}^i,\boldsymbol{w}^i) - (\boldsymbol{z}^j,\boldsymbol{w}^j)\big\|_1\). In principle, representing the complete pairwise comparison requires \(N^2\) constraints and \(N^2\) decision variables of $d_{ij}$. However, we know that \(d_{ij}+d_{ji}=1\) for all \(1\le i,j\le N\). Therefore, it suffices to explicitly impose the comparison constraints~\eqref{eq:random_select_knn_strict1} for only one direction of each pair, e.g., for \(1\le i<j\le N\), and define the reverse direction by \(d_{ji}=1-d_{ij}\). In constraints~\eqref{eq:random_select_knn_strict3}, for each \(i \in [N]\), the expression
\(
\sum_{j=1}^N d_{ij}=\sum_{i<j} d_{ij}+\sum_{i>j}(1-d_{ji})+1
\)
counts the number of observations whose distances are no greater than distance \(s_i \), and thus represents the rank of \(s_i \) in the ordered list of distances. Hence, constraints~\eqref{eq:random_select_knn_strict3} enforce that \(t_i=1\) if and only if the rank of observation \(i\) is no greater than \(k\), that is, \(\sum_{j=1}^N d_{ij} \le k\). Here, $M_2$ is another sufficiently large constant and its tightest value can be set to $M_2=N-k$. Constraints \eqref{eq:knn_binary} and \eqref{eq:comparison_binary} enforce that $d_{ij}$ and $t_i$ are binary decision variables.


The final formulation type of \eqref{eq:distance_compare_tie} depends on the specific form of the cost function \(c\). In the single-stage setting, when \(c(\boldsymbol{z}, \boldsymbol{Y})\) is affine in \(\boldsymbol{z}\) for any fixed $\boldsymbol{Y}$ and affine in $ \boldsymbol{Y}$ for any fixed $ \boldsymbol{z}$, the objective~\eqref{eq:distance_comparison_obj} remains MILP-representable and the problem \eqref{eq:distance_compare_tie} is an MILP. For example, in the newsvendor problem with pricing considered in Section \ref{sec:newsvendor}, we have a revenue term $pY$, where $p$ is the continuous price decision and $Y$ is the uncertain demand. This results in a bilinear term $pt_j$, which can be linearized exactly using McCormick envelopes \citep{mccormick1976computability} due to the binary variable $t_j$. We refer to Section \ref{subsec:newsvendor_full_formulation} for the full model of this newsvendor problem. In two-stage stochastic MILPs with RHS uncertainty, $\operatorname{Proj}_{\mathcal{Y}} (\frac{1}{k} \sum_j t_j y^j+ \hat{\epsilon}^i)$ will show up on the right-hand side of the second-stage constraints and the resulting formulation can also be recast as an MILP. We refer to Section \ref{subsec:facility_formulation} for a full reformulation of the two-stage facility location example.

\paragraph{Discussion: objective-driven tie breaking.} When multiple historical observations are equidistant from the query point, the \(k\)NN selection may not be unique. For example, when $s_i=s_j$, both data points $i$ and $j$ can be selected as one of the $k$ nearest neighbors, i.e., both $t_i$ and $t_j$ could be set to $1$. In this case, the optimization model implicitly breaks ties in favor of observations that lead to a better objective value. 

\subsubsection{Bilevel formulation}\label{subsubsec:bilevel}
Model~\eqref{eq:distance_compare_tie} involves a quadratic number of variables and constraints, which limits its tractability for large-scale problems. To improve scalability, we propose the following equivalent but more compact bilevel formulation 
\begin{subequations}\label{eq:bilevel}
   \begin{align}
    \min_{\boldsymbol z ,\ \boldsymbol s}\quad
    & \frac{1}{N}\sum_{i=1}^N
    c\!\left(
        \boldsymbol z,
        \operatorname{Proj}_{\mathcal Y}\!\left(
            \frac{1}{k}\sum_{j=1}^N t_j y^j + \hat{\epsilon}^i
        \right)
    \right) \\
    \text{s.t.}\quad
    & \eqref{eq:decision_feasibility}--\eqref{eq:random_select_knn_p_norm}, \\
    & \boldsymbol t \in \arg\min_{\boldsymbol t}
    \left\{
        \sum_{j=1}^N s_j  t_j :
        \sum_{j=1}^N  t_j = k,\ 
         t_j \in \{0,1\},\ \forall j \in [N]
    \right\}. \label{eq:lower_level}
\end{align} 
\end{subequations}
Here, the upper-level problem imposes the ER-DD-SAA objective with variable-defining constraints \eqref{eq:decision_feasibility}--\eqref{eq:random_select_knn_p_norm}, and the lower-level problem~\eqref{eq:lower_level} identifies the \(k\) nearest neighbors by selecting \(k\) historical observations with the smallest total distance to the query point \(\sum_{j=1}^N s_j t_j\). For a given \(\boldsymbol{s}\), the lower-level problem~\((\ref{eq:lower_level})\) is a 0-1 knapsack problem whose associated constraint matrix is totally unimodular \citep{schrijver1998theory}. Therefore, when the binary variables \(t_j \in \{0,1\}\) are relaxed to \(t_j\in[0,1]\), the resulting LP relaxation admits integral optimal solutions at extreme points. Hence, the binary lower-level problem~\eqref{eq:lower_level} is equivalent to its LP relaxation \(\boldsymbol t \in \arg\min_{\boldsymbol t}
    \left\{
        \sum_{j=1}^N s_j  t_j :
        \sum_{j=1}^N  t_j = k,\ 
         t_j \in [0,1],\ \forall j \in [N]
    \right\}. \)


Let \(\boldsymbol{\pi}\ge 0\) and \(\beta\) denote the dual variables associated with the constraints \(t_j \le 1, \, \forall j\in[N]\), and \(\sum_{j=1}^N t_j = k\), respectively. By strong duality, the LP relaxation can be recast as its dual: $\max_{\boldsymbol{\pi} \ge 0,\ \beta} \ \left\{ -\sum_{j=1}^N \pi_j + k\beta \ : \ \beta - \pi_j \le s_j, \ \forall j \in [N] \right\}$.
As a result, the bilevel problem~\eqref{eq:bilevel} can be reformulated as the following single-level problem:
\allowdisplaybreaks
\begin{subequations}\label{eq:single_level}
   \begin{align}
    \min_{\boldsymbol z, \boldsymbol{s}, \boldsymbol t, \beta, \boldsymbol{\pi}}\quad
    & \frac{1}{N}\sum_{i=1}^N
    c\!\left(
        \boldsymbol z,
        \operatorname{Proj}_{\mathcal Y}\!\left(
            \frac{1}{k}\sum_{j=1}^N t_j y^j + \hat{\epsilon}^i
        \right)
    \right) \\
    \text{s.t.}\quad
    & \eqref{eq:decision_feasibility}--\eqref{eq:random_select_knn_p_norm}, \\
        & 0 \le t_j \le 1, \quad \forall j \in [N], \label{eq:bilevel_t_range} \\
    & \sum_{j=1}^N t_j = k, \label{eq:primal_feasible_sum}\\
    & \beta - \pi_j \le s_j, \quad \forall j \in [N], \label{eq:bilevel_dual_constraints}\\
        & \boldsymbol{\pi} \ge 0, \label{eq:bilevel_pi_constraints} \\
    & \sum_{j=1}^N s_j t_j \le -\sum_{j=1}^N \pi_j +  k \beta, \label{eq:strong_duality}
    \end{align} 
\end{subequations}
where \eqref{eq:bilevel_t_range} and \eqref{eq:primal_feasible_sum} enforce the primal feasibility, \eqref{eq:bilevel_dual_constraints} and \eqref{eq:bilevel_pi_constraints} enforce the dual feasibility, and \eqref{eq:strong_duality} enforces the strong duality. The single-level reformulation above still contains bilinear terms \(s_j t_j, \, \forall j \in [N]\). Since \eqref{eq:lower_level} is equivalent to \eqref{eq:bilevel_t_range}--\eqref{eq:strong_duality}, the solution $\boldsymbol t^*$ of \eqref{eq:single_level} will achieve binary values at optimality. We may therefore replace \eqref{eq:bilevel_t_range} with \(t_j\in\{0,1\}\), \(j\in[N]\) in the single-level reformulation \eqref{eq:single_level} without changing its optimal value. The resulting bilinear terms can then be linearized exactly using McCormick envelopes \citep{mccormick1976computability}. We denote $\overline{s}_i$ as a valid upper bound on $s_i, \, \forall i \in [N]$. This yields the following equivalent MILP reformulation:
\begin{subequations}\label{eq:single_level_binary}
   \begin{align}
    \min_{\boldsymbol z, \boldsymbol{s}, \boldsymbol t, \beta, \boldsymbol{\pi},\boldsymbol{\gamma}}\quad
    & \frac{1}{N}\sum_{i=1}^N
    c\!\left(
        \boldsymbol z,
        \operatorname{Proj}_{\mathcal Y}\!\left(
            \frac{1}{k}\sum_{j=1}^N t_j y^j + \hat{\epsilon}^i
        \right)
    \right) \\
    \text{s.t.}\quad
    & \eqref{eq:decision_feasibility}--\eqref{eq:random_select_knn_p_norm},\\
    & \eqref{eq:primal_feasible_sum} -- \eqref{eq:bilevel_pi_constraints}, \label{eq:bilevel-1}\\
    & 0\le \gamma_{j} \le s_j, \quad \forall j \in [N],\label{eq:single_level_binary_1}\\
    &s_j + \overline{s}_j (t_j-1)\le \gamma_{j} \le \overline{s}_j t_j,\quad \forall j \in [N],\\
    & \sum_{j=1}^N \gamma_j \le -\sum_{j=1}^N \pi_j +  k \beta, \label{eq:single_level_binary_2} \\
    & t_j \in \{0,1\}, \quad \forall j \in [N].\label{eq:bilevel-last}
    \end{align} 
\end{subequations}
Compared to Model \eqref{eq:distance_compare_tie}, the number of decision variables and number of constraints in Model~\eqref{eq:single_level_binary} grow only linearly with the sample size $N$, which is more computationally tractable.

\subsection{ER-DD-SAA with CART}\label{subsec:mip_cart}
Once trained, the CART model assigns any query point \((\boldsymbol z,\boldsymbol w)\) to a unique leaf region \(r\in [N_R]\), and the associated prediction is given by the average response of the training samples contained in that leaf. We denote the leaf region \(r\) as $\mathcal{B}_r
=
\left\{
(\boldsymbol z,\boldsymbol w)\in\mathbb R^{d_z}\times\mathbb R^{d_w}
:
\boldsymbol a_z^r < \boldsymbol z \le \boldsymbol b_z^r,\;
\boldsymbol a_w^r < \boldsymbol w \le \boldsymbol b_w^r
\right\}$,
where \(\boldsymbol a_z^r,\boldsymbol b_z^r\in\overline{\mathbb{R}}^{d_z}\) and
\(\boldsymbol a_w^r,\boldsymbol b_w^r\in\overline{\mathbb{R}}^{d_w}\)
are the componentwise lower and upper bounds of $\boldsymbol{z}$ and $\boldsymbol{w}$ in leaf node \(r\) obtained from training. Given a realization of the covariate \(\boldsymbol w\), only those leaf nodes whose covariate intervals contain \(\boldsymbol w\) remain feasible. Accordingly, we may consider only a subset of candidate leaf nodes, denoted by
$\mathcal{N}^{\boldsymbol w}
:=
\left\{
r \in [N_R]
: \exists \boldsymbol{z}, 
(\boldsymbol z, \boldsymbol w)\in \mathcal{B}_r\}=\{r \in [N_R]
: \boldsymbol{w} \in (\boldsymbol a_w^r,\boldsymbol b_w^r]
\right\}$.
For each \(r\in\mathcal{N}^{\boldsymbol w}\), the associated region in the decision space is given by \((\boldsymbol a_z^r,\boldsymbol b_z^r]\). Therefore, once \(\boldsymbol w\) is fixed, determining whether $(\boldsymbol z,\boldsymbol{w})\in \mathcal{B}_r$ is equivalent to determining whether \(\boldsymbol z \in (\boldsymbol a_z^r,\boldsymbol b_z^r]\) for each $r\in\mathcal{N}^{\boldsymbol{w}}$. Let $N^{\boldsymbol{w}}:=|\mathcal{N}^{\boldsymbol{w}}|$ denote the number of leaf regions in the reduced set $\mathcal{N}^{\boldsymbol{w}}$. 

To embed the CART predictor into the optimization model, let \(R_r\in\{0,1\}\) denote whether leaf node \(r\in\mathcal{N}^{\boldsymbol w}\) contains the query point $(\boldsymbol{z},\boldsymbol{w})$, that is, \(R_r=1\) if and only if \(\boldsymbol z\) satisfies the bounds of region \(r\), namely, \(\boldsymbol a_z^r < \boldsymbol z \le \boldsymbol b_z^r\). For each leaf node \(r\), we further introduce binary decision variables \(l_r, u_r \in \{0,1\}\) to indicate whether \(\boldsymbol z\) satisfies the corresponding lower and upper bounds of region \(r\), respectively. Let \(\hat{y}^r\) denote the prediction associated with leaf node \(r\). The resulting exact MIP formulation for the ER-DD-SAA with CART is given by
\allowdisplaybreaks
\begin{subequations}
\begin{align}\min_{\boldsymbol{l},\boldsymbol{u},\boldsymbol{R}, \boldsymbol{z}} \quad & \frac{1}{N}\sum_{i=1}^N
    c\!\left(
        \boldsymbol z,
        \operatorname{Proj}_{\mathcal Y}\!\left(
            \sum_{r\in \mathcal{N}^{\boldsymbol{w}}} R_r \hat{y}^r + \hat{\epsilon}^i
        \right)
    \right) \\
    \text{s.t.} \quad 
    & \boldsymbol M (l_{r} - 1 ) + \boldsymbol \delta \le \boldsymbol{z} - \boldsymbol{a}_z^r \leq  \boldsymbol M l_{r}, \quad  \forall r \in \mathcal{N}^{\boldsymbol w},\ \label{eq:ind_region_lowerbound1} \\
    & -\boldsymbol M (1- u_{r}) \le \boldsymbol{b}_z^r - \boldsymbol{z} \leq \boldsymbol M u_{r} - \boldsymbol \delta, \quad  \forall r \in \mathcal{N}^{\boldsymbol w}, \label{eq:ind_region_upperbound1} \\
    &l_{r} + u_{r} \geq 2R_r, \quad  \forall r \in \mathcal{N}^{\boldsymbol w}, \label{eq:box-activation} 
    \\
    & \sum_{r\in \mathcal{N}^{\boldsymbol w}} R_r = 1, \label{eq:unique-box}\\
    & \boldsymbol z \in \mathcal Z, \label{eq:cart_decision_feasiblity}\\
    & l_{r}, u_{r}, R_r \in \{0, 1\}, \quad \forall r\in \mathcal{N}^{\boldsymbol w}, \label{eq:cart_binary_decision}
\end{align}
\label{eq:er_dd_saa_cart}
\end{subequations}
where constraints~\eqref{eq:ind_region_lowerbound1} enforce that $l_r = 1$ if and only if the decision vector satisfies the constraint $\boldsymbol{z} > \boldsymbol{a}_z^r$ componentwise. Similarly, Constraints~\eqref{eq:ind_region_upperbound1} ensure that $u_r = 1$ if and only if the decision vector satisfies $\boldsymbol{z} \le \boldsymbol{b}_z^r$ componentwise. Here, $\boldsymbol{M}$ is a sufficiently large vector, and $\boldsymbol{\delta}>0$ is a sufficiently small positive vector to enforce the strict inequalities. Constraints~\eqref{eq:box-activation} couple these bound-satisfaction indicators with the region-selection variable $R_r$, dictating that $R_r = 1$ only when both the lower and upper bounds of region $r$ are satisfied simultaneously, i.e., $l_r=u_r=1$. Furthermore, Constraint~\eqref{eq:unique-box} restricts the model to select exactly one region for any feasible decision $\boldsymbol{z}$. Consequently, Constraints~\eqref{eq:box-activation} and~\eqref{eq:unique-box} jointly guarantee the activation of the unique region containing $\boldsymbol{z}$. The final formulation type of \eqref{eq:er_dd_saa_cart} also depends on the specific cost function $c$. Since the CART prediction term contains binary variables $R_r$, we obtain the same conclusions as in the $k$NN case, where the resulting formulation can be recast as an MILP with both objective uncertainty and RHS uncertainty. We refer to Appendix~\ref{sec:full_formulation} for full reformulations under CART. For the special case when the decision is single-dimensional (\(d_z=1\)), we also provide an equivalent formulation in Appendix~\ref{sec:er_dd_saa_cart_omitted}.

\subsection{ER-DD-SAA with ReLU NNs}\label{subsec:mip_nn}
In this section, we consider a feedforward neural network trained prior to solving the downstream optimization problem. Let $\{(\hat{\boldsymbol{v}}^{(l)},\hat{\boldsymbol{\alpha}}^{(l)})\}_{l=1}^{L+1}$ denote the estimated weight matrices and bias vectors, trained on the historical dataset $\mathcal{D}_N=\{(\boldsymbol{z}^i, \boldsymbol{w}^i, \boldsymbol{y}^i)\}_{i=1}^N$. These parameters can be learned by minimizing mean squared error via gradient-based stochastic optimization algorithms. 

The resulting ER-DD-SAA problem is obtained by embedding the trained ReLU NNs into the optimization model as follows:
\allowdisplaybreaks
\begin{subequations}
\begin{align}
\min_{\hat{y},\boldsymbol{h},\boldsymbol{z}}\quad
& \frac{1}{N}\sum_{i=1}^N
    c\!\left(
        \boldsymbol{z},
        \operatorname{Proj}_{\mathcal{Y}}\!\left(
            \hat{y} + \hat{\boldsymbol{\epsilon}}_i
        \right)
    \right) \\
\text{s.t.}\quad &\boldsymbol{z} \in \mathcal{Z}, \\
& \boldsymbol{h}^{(0)} = [\boldsymbol{z};\boldsymbol{w}], \label{eq:nn_first_layer}\\
& \boldsymbol{h}^{(l)} = \max \big\{0, \hat{\boldsymbol{v}}^{(l)}\boldsymbol{h}^{(l-1)} + \hat{\boldsymbol{\alpha}}^{(l)} \big\},
\quad \forall l \in [L], \label{eq:relu_activation} \\
& \hat{y} = \hat{\boldsymbol{v}}^{(L+1)}\boldsymbol{h}^{(L)} + \hat{\boldsymbol{\alpha}}^{(L+1)}. \label{eq:nn_output_layer}
\end{align}
\label{eq:nn_erddsaa}
\end{subequations}
Let \(N_l\) denote the number of neurons for each layer. For each neuron $n \in [N_l]$ in layer $l \in [L]$, the ReLU activation function in \eqref{eq:relu_activation} can be modeled as \citep{badilla2023computational}: 
\begin{subequations}\label{eq:relu_milp}
\begin{align}
& h_n^{(l)} - \eta_n^{(l)} = \hat{\boldsymbol{v}}_n^{(l)}\boldsymbol{h}^{(l-1)} + \hat{\alpha}_n^{(l)}, \label{eq:relu_linear_1} \\
& 0 \le h_n^{(l)} \le U_n^{(l)}   \kappa_n^{(l)}, \label{eq:relu_linear_2} \\
& 0 \le \eta_n^{(l)} \le -L_n^{(l)} \big(1-  \kappa_n^{(l)}\big), \label{eq:relu_linear_3} \\
&   \kappa_n^{(l)} \in \{0,1\}, \label{eq:relu_linear_4}
\end{align}
\end{subequations}
where $h_n^{(l)}$ and $\eta_n^{(l)}$ denote the positive and negative components of the \(\hat{\boldsymbol{v}}_n^{(l)}\boldsymbol{h}^{(l-1)} + \hat{\alpha}_n^{(l)}\), $U_n^{(l)}$ and $L_n^{(l)}$ are valid upper and lower bounds on \(\hat{\boldsymbol{v}}_n^{(l)}\boldsymbol{h}^{(l-1)} + \hat{\alpha}_n^{(l)}\), and $  \kappa_n^{(l)}$ indicates whether \(\hat{\boldsymbol{v}}_n^{(l)}\boldsymbol{h}^{(l-1)} + \hat{\alpha}_n^{(l)}\) is non-negative. The complexity of \eqref{eq:nn_erddsaa} again depends on the specific cost function \(c\). Unlike the $k$NN and CART models, when uncertainty enters the objective function through products with other decision variables, the resulting formulation may contain bilinear terms involving the continuous prediction variable $\hat{y}$. If $\hat{y}$
 is multiplied only by binary decision variables, these bilinear terms can be linearized exactly using McCormick envelopes, allowing the overall problem to be reformulated as an MILP. In contrast, if $\hat{y}$
 appears in bilinear products with other continuous decision variables, exact linearization is generally not possible, and the resulting formulation becomes an MINLP. For example, in the newsvendor problem with pricing presented in Section~\ref{sec:newsvendor}, the cost function $c(p,q,Y)= f q - pY + h(q-Y)^+ + b(Y-q)^+$ contains the bilinear term $p\hat{y}$. Since both $p$ and $\hat{y}$ are continuous decision variables, the resulting model \eqref{eq:nn_erddsaa} is an MINLP. The complete MINLP formulation is presented in Model \eqref{eq:newsvendor_nn} in Appendix \ref{subsec:newsvendor_full_formulation}. When uncertainty appears on the RHS of the constraints (e.g., in two-stage facility location problems presented in Section~\ref{sec:facility_location_problem}), the resulting model \eqref{eq:nn_erddsaa} can be recast as an MILP. Please refer to Model \eqref{eq:er_dd_saa_facility_full_nn} for the full MILP representation.

\subsection{Comparisons between different nonparametric regression models}
We compare the formulation size of the resulting ER-DD-SAA problem under different nonparametric regression models in Table~\ref{tab:complexity_comparison}.
As shown in Table~\ref{tab:complexity_comparison}, the sizes of all \(k\)NN-based formulations depend on the sample size \(N\). Even the most compact bilevel reformulation (i.e.  Model~\eqref{eq:single_level_binary}) scales linearly with \(N\). In contrast, the sizes of the CART and ReLU NNs formulations depend solely on the number of leaf nodes \(N^{\boldsymbol{w}}\) and the total number of hidden neurons \(\sum_{l=1}^L N_l\), respectively. In practice, the sizes of the leaf nodes and neurons are typically orders of magnitude smaller than the sample size \(N\). Consequently, \(k\)NN-based formulations are the most computationally prohibitive ones (especially formulation \eqref{eq:distance_compare_tie}) among all three nonparametric regression models. Motivated by this computational bottleneck, Section~\ref{sec:knn_tractability} focuses on developing tailored decomposition algorithm to improve the tractability of \(k\)NN-based formulations.

\begin{table}[htbp]
\centering
\caption{Comparison of formulation complexity across different regression models.}
\label{tab:complexity_comparison}
\begin{tabular}{lcccc}
\toprule
& \(k\)NN~\eqref{eq:distance_compare_tie}
& \(k\)NN~\eqref{eq:single_level_binary}
& CART~\eqref{eq:er_dd_saa_cart}
& ReLU NNs~\eqref{eq:nn_erddsaa}
\\ \midrule
\# binary variables
& \(\mathcal{O}(N^2)\)
& \(\mathcal{O}(N)\)
& \(\mathcal{O}(N^{\boldsymbol{w}})\)
& \(\mathcal{O}\bigl(\sum_{l=1}^L N_l\bigr)\)
\\
\# constraints
& \(\mathcal{O}(N^2)\)
& \(\mathcal{O}(N)\)
& \(\mathcal{O}(N^{\boldsymbol{w}})\)
& \(\mathcal{O}\bigl(\sum_{l=1}^L N_l\bigr)\)
\\ \bottomrule
\end{tabular}
\end{table}


\section{Decomposition Algorithm for Two-Stage ER-DD-SAA with \(k\)NN} \label{sec:knn_tractability} 

In this section, we develop a tailored decomposition algorithm for a general two-stage stochastic programming problem with decision-dependent uncertainty of the following form:
\allowdisplaybreaks
\begin{align}\label{eq:two_stage_problem}
    \min_{\boldsymbol{z} \in \mathcal{Z}} \quad \boldsymbol{f}^\top \boldsymbol{z} + \mathbb{E}[h(\boldsymbol{z}, \boldsymbol{Y}(\boldsymbol{z}, \boldsymbol{w}))|Z=\boldsymbol{z}, W=\boldsymbol{w}], 
\end{align}
where the second-stage recourse function $h(\boldsymbol{z},\boldsymbol{Y}(\boldsymbol{z}, \boldsymbol{w}))$ is given by:
\allowdisplaybreaks
\begin{subequations}\label{eq:recourse_function}
\begin{align} 
h(\boldsymbol{z}, \boldsymbol{Y}(\boldsymbol{z}, \boldsymbol{w})) := \min_{\boldsymbol{x}} \quad & \boldsymbol{g}^{\top}\boldsymbol{x} \\ 
\text{s.t.} \quad & \boldsymbol{A}\boldsymbol{x} \ge H(\boldsymbol{z}) + \boldsymbol{T}\boldsymbol{Y}(\boldsymbol{z}, \boldsymbol{w}), \\
& \boldsymbol{x} \in \mathbb{R}_{+}^{d_x}.
\end{align} 
\end{subequations}
Here, $\boldsymbol{z}\in\mathcal{Z}$ denotes the first-stage decision with cost vector $ \boldsymbol f\in\mathbb{R}^{d_z}$, $\boldsymbol{x}\in\mathbb{R}_+^{d_x}$ denotes the second-stage decision with cost vector $ \boldsymbol g\in\mathbb{R}^{d_x}$, $\boldsymbol Y(\boldsymbol z,\boldsymbol w)$ is the uncertainty that depends on first-stage decision $\boldsymbol z$ and covariate $\boldsymbol w$, and $H(\boldsymbol{z})$ is affine in first-stage decision $ \boldsymbol z$. Note that we only require the second-stage decision variables $\boldsymbol{x}$ to be continuous, while the first-stage decision $\boldsymbol{z}$ can be either continuous or discrete. For simplicity of presentation, we assume $\mathcal{Y}=\mathbb{R}^{d_y}$ and therefore do not consider the projection step in this section. To ensure the problem \eqref{eq:two_stage_problem} is valid, we assume relatively complete recourse and sufficiently expensive recourse as follows:
\begin{assumption} \label{ass:complete_recourse}
    For every feasible first-stage decision $\boldsymbol z\in \mathcal{Z}$ and every realization of $\boldsymbol{Y}$, the second-stage problem~\eqref{eq:recourse_function} is feasible.
\end{assumption}
\begin{assumption}\label{ass:expensive_recourse}
    For every feasible first-stage decision $ \boldsymbol{z}\in\mathcal{Z}$ and every realization of $\boldsymbol{Y}$, the dual of the second-stage problem \eqref{eq:recourse_function} is feasible.
\end{assumption}

Assumptions 1 and 2 ensure that $-\infty< h(\boldsymbol z, \boldsymbol Y(\boldsymbol z,\boldsymbol w))< +\infty$. Since $k$NN's formulation \eqref{eq:distance_compare_tie} is the most computationally challenging one, we focus on developing a decomposition algorithm for two-stage ER-DD-SAA with formulation \eqref{eq:distance_compare_tie} in the following form
\begin{subequations}\label{eq:two_stage_problem_saa}
    \begin{align}
           \min_{\boldsymbol{z}, \boldsymbol{d}, \boldsymbol{s},
           \boldsymbol{t}, \boldsymbol{x}} \quad & \boldsymbol{f}^\top \boldsymbol{z} + \frac{1}{N} \sum_{i=1}^N \boldsymbol{g}^T \boldsymbol{x}_i \\
    \text{s.t.} \quad & \eqref{eq:decision_feasibility}--\eqref{eq:comparison_binary}, \\
    & \boldsymbol{A} \boldsymbol{x}_i \ge H(\boldsymbol{z})+ \boldsymbol{T} \left(\frac{1}{k} \sum_{j=1}^N t_j y^j + \hat{\epsilon}^i\right), \quad \forall i \in [N], \label{eq:recourse_function_constraints} \\
    & \boldsymbol x_i\in \mathbb{R}_+^{d_x}, \quad \forall i\in [N].\label{eq:recourse_function_x}
    \end{align}
\end{subequations}
The main computational difficulty comes from two sources: the large number of constraints required to characterize the \(k\)NN selection in constraints \eqref{eq:decision_feasibility}--\eqref{eq:comparison_binary}, and the large number of scenarios due to empirical residuals in constraints \eqref{eq:recourse_function_constraints}. To address these challenges, this section develops a unified decomposition algorithm named Bender's Decomposition with Constraint Generation (BD-CG), where we treat $\boldsymbol{z},\boldsymbol{d}, \boldsymbol{s}, \boldsymbol{t}$ as first-stage decision variables subject to constraints \eqref{eq:decision_feasibility}--\eqref{eq:comparison_binary} and $\boldsymbol{x}$ as second-stage decision variables subject to constraints \eqref{eq:recourse_function_constraints}--\eqref{eq:recourse_function_x}. Our approach dynamically adds violated first-stage constraints to identify the exact $k$NN neighborhood, while using Bender's optimality cuts to approximate the second-stage recourse function.

At iteration \(m\), the relaxed master problem (RMP) is defined as follows: \allowdisplaybreaks 
\begin{subequations}\label{eq:master_problem} 
\begin{align} 
\min_{\boldsymbol z, \boldsymbol{d}, \boldsymbol{s}, \boldsymbol{t}, \theta} \quad & \boldsymbol{f}^{\top}\boldsymbol{z} + \theta \\ 
\text{s.t.} \quad & \eqref{eq:decision_feasibility}--\eqref{eq:random_select_knn_sum_t}, \eqref{eq:knn_binary}, \label{eq:algorithm_constraint_1}\\ 
& d_{ij} + d_{ji}=1, \quad \forall i \in I,\ j \in J, \label{eq:alg_constraint1}\\ & M_1^{ij}(d_{ij}-1) \le s_i - s_j \le M_1^{ij} d_{ij}, \quad \forall i \in I,\ j \in J, \label{eq:alg_constraint2}\\ 
& k-M_2 t_i \le \sum_{j \in (I \cup J)\setminus\{i\}} d_{ij} + 1 \le k+M_2(1-t_i), \quad \forall i \in I \cup J,\\ 
& d_{ij}\in\{0,1\},\ \forall i,j\in I\cup J,\\
& \theta \ge L_{\theta}, \label{eq:lower_bound_theta} \\
& \theta \ge \Theta^{\ell}(\boldsymbol z, \boldsymbol t), \quad \forall \ell \in [m-1]. \label{eq:Bender's_cuts}
\end{align} 
\end{subequations}
Here, instead of enforcing the pairwise distance comparison constraints \eqref{eq:alg_constraint2} for all $i,j\in [N]$, we start with a small subset $i\in I\subset[N],\ j\in J\subset[N]$, which forms a relaxation of the first-stage feasible region. At the same time, we use \(\theta\) to approximate the expected second-stage recourse function, and \(\{\theta \ge \Theta^{\ell}(\boldsymbol z, \boldsymbol t)\}_{\ell=1}^m\) denotes the set of Bender's optimality cuts generated up to iteration \(m\), where we initialize the algorithm with an empty cut set. The constant \(L_{\theta}\) is a valid lower bound on the expected recourse function and is included to ensure that the RMP is bounded at initialization.


Given an optimal solution $(\boldsymbol z^m, \boldsymbol t^m)$ of the RMP at iteration $m$, we solve the following subproblem, which is a dual problem of \eqref{eq:recourse_function} under scenario $i$:
\allowdisplaybreaks
\begin{subequations}\label{eq:sub_problem}
\begin{align}
    G^i (\boldsymbol z^m, \boldsymbol t^m) 
    := 
    \max_{\boldsymbol{\pi}_i} \quad 
    & \boldsymbol{\pi}_i^{\top} \left(
        H (\boldsymbol z^m) 
        + 
        \boldsymbol{T}\left( 
            \frac{1}{k} \sum_{j=1}^N t^m_j y^j + \hat{\epsilon}^i 
        \right) 
    \right) \\
    \text{s.t.} \quad 
    & \boldsymbol{A}^{\top}\boldsymbol{\pi}_i \le \boldsymbol{g},\ \boldsymbol{\pi}_i \ge \boldsymbol{0}.
\end{align}
\end{subequations}
Let $\boldsymbol{\pi}_i^m\ge 0$ denote an optimal solution of \eqref{eq:sub_problem}. The resulting Bender's optimality cut is given by: 
\begin{equation}\label{eq:dual_cut} 
\theta \ge \Theta^m(\boldsymbol z, \boldsymbol t) := \frac{1}{N} \sum_{i=1}^N \left(\boldsymbol{\pi}_i^m\right)^{\top} \left[  H (\boldsymbol z) 
        + 
        \boldsymbol{T} \left( \frac{1}{k} \sum_{j=1}^N t_j y^j + \hat{\epsilon}^i \right) \right]. 
\end{equation}
Note that although we present the cut in the above single-cut formulation, a multi-cut variant can also be used here. We initialize the algorithm with \(I=\emptyset\), \   \(J=\emptyset\). At each iteration \(m\), we solve the RMP~\eqref{eq:master_problem} and obtain an optimal solution \((\boldsymbol z^m,\boldsymbol d^m,\boldsymbol s^m,\boldsymbol t^m,\theta^m)\), together with the RMP-selected $k$NN set \(F^m:=\{i:t_i^m=1\}\). Because the RMP contains only a subset of the original distance comparison constraints, its objective value serves as a lower bound to the original problem, and the selected $k$NN set \(F^m\) may not coincide with the true \(k\) nearest neighbors for \(\boldsymbol z^m\). Given the current solution \(\boldsymbol z^m\), we identify the true \(k\)NN set \(S^m \coloneqq \{i: i \in \mathcal{N}_k(\boldsymbol{z}^m, \boldsymbol{w})\}\). We then update the first-stage constraints by setting \(I \leftarrow I \cup S^m\) and \(J \leftarrow J \cup F^m\). Given the current candidate solution \((\boldsymbol z^m,\boldsymbol t^m)\), we solve the subproblem~\eqref{eq:sub_problem} for each \(i \in [N]\), generate the Bender's optimality cut~\eqref{eq:dual_cut}, and add it to the RMP~\eqref{eq:master_problem}. We also compute a candidate upper bound by evaluating the recourse function using the true \(k\)NN prediction induced by \(S^m\). We repeatedly solve the updated RMP until convergence. The algorithm terminates once the RMP-selected $k$NN set is consistent with the true $k$NN set, i.e., \(F^m = S^m\), and the newly generated Bender's optimality cut is satisfied. Note that when there is a tie, $S^m$ may contain more than $k$ points if multiple points are equidistant to the query point and can all be served as a $k$NN. In this case, we require $F^m \subseteq S^m$.  Alternatively, one can also terminate the algorithm when $F^m\subseteq S^m$ and the optimality gap between the lower bound and upper bound falls below a predefined tolerance. The detailed procedure is presented in Algorithm~\ref{alg:row_generation_knn}.
\begin{algorithm}[!htbp]
\caption{BD-CG algorithm for ER-DD-SAA with \(k\)NN under formulation \eqref{eq:distance_compare_tie}}
\label{alg:row_generation_knn}
\begin{algorithmic}[1]
\State Initialization: \(I \leftarrow \emptyset\), \(J \leftarrow \emptyset\), \(m \leftarrow 1\), \(LB \leftarrow -\infty\), \(UB \leftarrow +\infty\), and \(\texttt{converged}\leftarrow \texttt{False}\).

\While{\(\texttt{converged}=\texttt{False}\) \textbf{and} \(m<T_{\max}\)}

    \State Solve RMP~\eqref{eq:master_problem} to
    obtain an optimal solution \((\boldsymbol z^m,\boldsymbol d^m,\boldsymbol s^m,\boldsymbol t^m,\theta^m)\) and the RMP-selected $k$NN set \(F^m:=\{i:t_i^m=1\}\). Set \(LB=\) RMP's objective value.
    
    \State Given \(\boldsymbol z^m\) and  \(\boldsymbol w\), identify the true $k$NN set \(S^m \coloneqq \{i: i \in \mathcal{N}_k(\boldsymbol{z}^m, \boldsymbol{w})\}\). Define the indicator vector \(\boldsymbol{\tau}^m \in \{0, 1\}^N\) such that \(\tau_i^m = 1\) if \(i \in S^m\), and \(\tau_i^m = 0\) otherwise.

    \State Update the sets:
    \(
        I \leftarrow I\cup S^m,
        \,
        J \leftarrow J\cup F^m.
    \)

    \State Given \((\boldsymbol z^m, \boldsymbol{t}^m)\) , solve subproblem~\eqref{eq:sub_problem} for each $i\in [N]$, and add the following Bender's optimality cut~\eqref{eq:dual_cut} to the master problem~\eqref{eq:master_problem}: $\theta \ge \Theta^m(\boldsymbol z, \boldsymbol t)$.

    \State Compute the candidate upper bound
    \(
        UB^{\mathrm{cand}}
        =
        \boldsymbol f^\top \boldsymbol z^m
        +
        \frac{1}{N} \sum_{i=1}^NG^i(\boldsymbol z^m, \boldsymbol{\tau}^m).
    \)
    \State Update
    \(
        UB \leftarrow \min\{UB,UB^{\mathrm{cand}}\}
    \) and compute the optimality gap:
    \(
        \mathrm{Gap}
        =
        (UB-LB)/|UB|.
    \)

    \If{\(F^m \subseteq S^m\) \textbf{and} $\theta^m \ge \Theta^m(\boldsymbol z^m, \boldsymbol t^m)$}
        \State Set \(\texttt{converged}\leftarrow \texttt{True}\).
    \EndIf
    \State Set \(m \leftarrow m+1\).
\EndWhile

\State Return the incumbent solution \((\boldsymbol z^m,\boldsymbol d^m,\boldsymbol s^m,\boldsymbol t^m,\theta^m)\).
\end{algorithmic}
\end{algorithm}

We next establish the convergence of the proposed BD-CG algorithm. When the algorithm fails to meet the exact termination criteria, it either adds the violated constraints to the master problem by updating sets \(I\) and \(J\) or generates a Bender's optimality cut. These new constraints prune the current infeasible or suboptimal solution. Since the total number of pairwise distance comparison constraints and Bender's cuts is finite, the algorithm is guaranteed to converge to the global optimum in a finite number of iterations, as shown in the following theorem. All the omitted proofs are shown in Appendix \ref{proof:algorithm_convergence}.

\begin{theorem}\label{thm:alg_convergence}
Suppose Assumptions~\ref{ass:complete_recourse} and~\ref{ass:expensive_recourse} hold, 
\(\mathcal Z\) is nonempty and compact, and the
RMP~\eqref{eq:master_problem} is solved to
global optimality at each iteration. The
proposed BD-CG algorithm converges to a globally optimal solution in finitely
many iterations.
\end{theorem}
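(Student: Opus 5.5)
The argument has three parts: every RMP value is a valid lower bound, only finitely many distinct cuts and comparison constraints can ever be generated, and the stopping test certifies optimality.

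\emph{Lower bound.} For every $\boldsymbol z\in\mathcal Z$, the true $k$NN indicator $\boldsymbol t$, with pairwise indicators $d_{ij}$ consistent with the true distance order, satisfies every comparison constraint for every pair $(I,J)$. By weak duality of \eqref{eq:sub_problem}, every optimality cut \eqref{eq:dual_cut} satisfies $\Theta^\ell(\boldsymbol z,\boldsymbol t)\le \frac1N\sum_i h(\boldsymbol z,\cdot)$ at any $(\boldsymbol z,\boldsymbol t)$. Assumptions~\ref{ass:complete_recourse}--\ref{ass:expensive_recourse} keep all recourse values finite, so a finite $L_\theta$ exists. Hence the RMP is a relaxation of \eqref{eq:two_stage_problem_saa}, and $LB$ is a valid lower bound at every iteration. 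The RMP has an optimal solution because $\mathcal Z$ is compact and nonempty, the remaining variables are binary or bounded, and $\theta$ is bounded below.

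\emph{Finiteness.} Assumptions~\ref{ass:complete_recourse}--\ref{ass:expensive_recourse} make each dual feasible region $\{\boldsymbol\pi:\boldsymbol A^\top\boldsymbol\pi\le\boldsymbol g,\ \boldsymbol\pi\ge0\}$ a nonempty polyhedron, and the dual is bounded. Taking $\boldsymbol\pi_i^m$ to be an extreme-point optimum, as the simplex method returns, there are finitely many possible cut functions $\Theta^\ell$. The index sets satisfy $I,J\subseteq[N]$ and grow monotonically, so they change only finitely often. Consequently, after finitely many iterations neither $I$, $J$, nor the cut collection can change. I then show that any iteration producing no new constraint must terminate:
\begin{itemize}
\item If $F^m\not\subseteq S^m$, adding $S^m$ to $I$ and $F^m$ to $J$ either enlarges the sets or, if it does not, the comparisons between $F^m$ and $S^m$ were already enforced. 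In the latter case $(\boldsymbol z^m,\boldsymbol t^m)$ would satisfy rank constraints that force $F^m$ to be true nearest neighbors, a contradiction.
\item If $\theta^m<\Theta^m(\boldsymbol z^m,\boldsymbol t^m)$, the cut is new because it strictly cuts off the current point, and the repeated-cut argument forbids this once the cut collection is fixed.
\end{itemize}

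\emph{Optimality at termination.} When $F^m\subseteq S^m$, $\boldsymbol t^m$ is a valid $k$NN selection for $\boldsymbol z^m$, with ties resolved as in \eqref{eq:distance_compare_tie}, and there exists $\boldsymbol d$ satisfying all $N^2$ comparison constraints. So $(\boldsymbol z^m,\boldsymbol t^m)$ is feasible for \eqref{eq:two_stage_problem_saa}. Strong duality gives $\Theta^m(\boldsymbol z^m,\boldsymbol t^m)=\frac1N\sum_i G^i(\boldsymbol z^m,\boldsymbol t^m)$, the true recourse cost. Together with $\theta^m\ge\Theta^m(\boldsymbol z^m,\boldsymbol t^m)$ and $\theta^m$ being a lower approximation, the objective of this feasible point equals $LB$, so it is globally optimal.

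The main obstacle is the first bullet of the finiteness step: showing that when the sets do not grow, $F^m\subseteq S^m$ is actually forced. The difficulty is that the rank constraint in the RMP counts only comparisons within $I\cup J$. I would argue by contradiction. Suppose some $j\in F^m\setminus S^m$. Because $|F^m|=k$, some $i\in S^m\setminus F^m$ exists with $s_i<s_j$. Both indices already lie in $I\cup J$, so $d_{ji}=1$ is enforced, and $i$ ranks ahead of $j$ in the restricted count. Counting over $S^m\cup F^m\subseteq I\cup J$ should then force the rank of $j$ above $k$, contradicting $t_j=1$. I would need to check this counting carefully when ties are present, perhaps by a pigeonhole bound over the strictly closer points.
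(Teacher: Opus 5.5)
Your outline matches the paper's proof: RMP relaxation gives $LB$, finitely many comparison constraints and extreme-point cuts give finiteness, and a sandwich argument certifies optimality at termination. You are right that the constraint-generation step needs more than the paper's one-line assertion. Your counting closes it, and more simply than you feared. Read $S^m$ as $\{l: s_l\le s_{(k)}\}$, where $s_{(k)}$ is the $k$-th smallest distance; this is the reading the test $F^m\subseteq S^m$ needs under ties. Then every $j\in F^m\setminus S^m$ is strictly farther than every $l\in S^m$. If $S^m\subseteq I$ and $F^m\subseteq J$, each pair $(l,j)$ is enforced, which forces $d_{jl}=1$ for all $l\in S^m$. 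So $j$'s truncated count is at least $|S^m|+1\ge k+1$, contradicting the upper half of the rank constraint with $t_j=1$. You need neither an $i\in S^m\setminus F^m$ nor a count over $F^m$.

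The genuine gap is your lower-bound step, and the paper's step 1(b) has the same hole. You check only the comparison constraints. But the RMP's rank constraint sums over $(I\cup J)\setminus\{i\}$ only and keeps its lower half, which for $t_i=0$ reads $\sum_{j\in(I\cup J)\setminus\{i\}}d_{ij}+1\ge k$. Truncating the sum makes this stronger than in \eqref{eq:distance_compare_tie}, so for $k\ge 2$ the true $k$NN selection can be infeasible for the RMP.

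Here is a concrete failure. Take $k=2$, $\mathcal Z=[0,11]$, and points $e,g,c,a,b$ at $0,8,9,10,10.5$ with responses $0,0,0,100,100$. Use a recourse LP whose value is $-Y$ on the relevant range, so one cut is exact.
\begin{enumerate}
\item The first RMP has only $\theta\ge L_\theta$, so $z^1=8.5$ with $F^1=\{c,e\}$ is an admissible optimum. Then $S^1=\{g,c\}$, $I=\{g,c\}$, $J=\{c,e\}$, and every pair in $\{g,c,e\}$ is enforced (reading the pairwise constraints for $i\ne j$).
\item At any $z$, the member of $\{g,c,e\}$ nearest to $z$ has truncated count $1<2$, so it must have $t=1$. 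Hence $\{a,b\}$ is infeasible at every $z$.
\item The second RMP therefore has value $-50$ (up to the mean residual). This value is attained at $(z,t)=(9.5,\{c,a\})$, a true $2$NN selection that passes the termination test.
\item The true optimum, at $z=10.25$ with neighbors $\{a,b\}$, is $-100$.
\end{enumerate}
So $LB$ is not a valid bound, and your final sandwich breaks.

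The repair is to drop the lower half of the rank constraint in the RMP. Then the truncated count never exceeds the full one, the RMP is a genuine relaxation, and your counting argument, which uses only the upper half, is unaffected. For $k=1$, the case run in the experiments, the lower half is vacuous and your proof goes through as written.
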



\section{Statistical Guarantee: Consistency and Asymptotic Optimality}\label{sec:consistency}
In this section, we establish the asymptotic optimality and consistency of the ER-DD-SAA problem~\eqref{eq:er-DD-SAA} under $k$NN, CART, and ReLU NNs. To establish these theoretical guarantees, we first introduce several assumptions.

\begin{assumption}\label{assumption:liptschitz}
For each \(\boldsymbol{z} \in \mathcal{Z}\), the cost function \(c(\boldsymbol{z}, \cdot)\) in problem~\eqref{eq:dd-csp} is Lipschitz continuous with respect to $\boldsymbol{Y}\in \mathcal{Y}$, i.e.,
\(
|c(\boldsymbol{z}, \bar{\boldsymbol{y}}) - c(\boldsymbol{z}, \boldsymbol{y})| \leq L(\boldsymbol{z}) \| \bar{\boldsymbol{y}} - \boldsymbol{y} \|, \ \forall \boldsymbol{y}, \bar{\boldsymbol{y}} \in \mathcal{Y},
\)
where the Lipschitz modulus \(L(\boldsymbol{z})\) satisfies \(\sup_{\boldsymbol{z} \in \mathcal{Z}} L(\boldsymbol{z}) < +\infty\). 
\end{assumption}

Assumption~\ref{assumption:liptschitz} is satisfied for a broad class of problems, including piecewise-linear costs such as the newsvendor problem presented in Section~\ref{sec:newsvendor} and the two-stage stochastic MILPs with continuous recourse studied in Section~\ref{sec:knn_tractability}.

\begin{assumption}\label{assumption:weak_LLN_and_dominated}
(i) The weak Law of Large Numbers (LLN) holds pointwise for error samples $\{\boldsymbol{\epsilon}^i\}_{i=1}^N$; (ii) for almost every \(\boldsymbol{w} \in \mathcal{W}\), $c(\cdot,\boldsymbol{Y}(\cdot,\boldsymbol{w}))$ is continuous on $\mathcal{Z}$; and (iii) \(c(\cdot, \boldsymbol{Y}(\cdot, \boldsymbol{w}))\) is dominated by an integrable function.
\end{assumption}

Assumption~\ref{assumption:weak_LLN_and_dominated}(i) holds when the error
samples \(\{\boldsymbol{\epsilon}^i\}_{i=1}^N\) are independent and identically distributed (i.i.d.), and more generally for various mixing and stationary
processes \citep{mcleish}. 



\begin{proposition}\label{proposition:saa_convergence}
If \(\mathcal{Z}\) is compact and  Assumption~\ref{assumption:weak_LLN_and_dominated} holds, then for almost every \(\boldsymbol{w} \in \mathcal{W}\), the sequence of sample average functions \(\{   \upsilon_N^\ast(\cdot,\boldsymbol{w}) \}\) defined in~\eqref{eq:fi-er-dd-saa} converges in probability to the true function \(  \upsilon(\cdot, \boldsymbol{w})\) defined in~\eqref{eq:er-dd-saa-true}, uniformly on \(\mathcal{Z}\).
\end{proposition}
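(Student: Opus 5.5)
This is a uniform weak law of large numbers on the compact set $\mathcal{Z}$, and I would prove it by the classical argument of \citet{shapiro} (Theorem 7.48 in Shapiro--Dentcheva--Ruszczy\'nski), adapted from almost-sure to in-probability convergence. Fix $\boldsymbol{w}$ outside the null set where Assumption~\ref{assumption:weak_LLN_and_dominated}(ii) fails. Write $F(\boldsymbol{z},\boldsymbol{\epsilon}) := c\big(\boldsymbol{z}, Q^\ast(\boldsymbol{z},\boldsymbol{w})+\boldsymbol{\epsilon}\big)$. Then $\upsilon_N^\ast(\boldsymbol{z},\boldsymbol{w}) = \frac1N\sum_i F(\boldsymbol{z},\boldsymbol{\epsilon}^i)$ and $\upsilon(\boldsymbol{z},\boldsymbol{w})=\mathbb{E}[F(\boldsymbol{z},\boldsymbol{\epsilon})]$. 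By Assumption~\ref{assumption:weak_LLN_and_dominated}(ii), $F(\cdot,\boldsymbol{\epsilon})$ is continuous on $\mathcal{Z}$ for a.e.\ $\boldsymbol{\epsilon}$. By (iii), $|F(\boldsymbol{z},\boldsymbol{\epsilon})|\le G(\boldsymbol{\epsilon})$ with $G$ integrable.

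The first step establishes continuity of $\upsilon(\cdot,\boldsymbol{w})$ and a modulus-of-continuity bound. For $\bar{\boldsymbol{z}}\in\mathcal{Z}$ and $\gamma>0$, set $\Delta(\bar{\boldsymbol{z}},\gamma,\boldsymbol{\epsilon}) := \sup_{\boldsymbol{z}\in\mathcal{Z},\|\boldsymbol{z}-\bar{\boldsymbol{z}}\|\le\gamma}|F(\boldsymbol{z},\boldsymbol{\epsilon})-F(\bar{\boldsymbol{z}},\boldsymbol{\epsilon})|$. This quantity is bounded by $2G$ and tends to $0$ a.s.\ as $\gamma\downarrow0$ by continuity. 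Dominated convergence then gives $\mathbb{E}[\Delta(\bar{\boldsymbol{z}},\gamma,\boldsymbol{\epsilon})]\to0$, and in particular $\upsilon(\cdot,\boldsymbol{w})$ is continuous on $\mathcal{Z}$. Measurability of the supremum can be handled by taking it over a countable dense subset, which is justified by continuity.

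The second step is a finite-cover argument. Fix $\eta>0$. For each $\bar{\boldsymbol{z}}$, choose $\gamma(\bar{\boldsymbol{z}})$ so that $\mathbb{E}[\Delta(\bar{\boldsymbol{z}},\gamma,\boldsymbol{\epsilon})]<\eta$. By compactness, finitely many balls centered at $\bar{\boldsymbol{z}}_1,\dots,\bar{\boldsymbol{z}}_K$ cover $\mathcal{Z}$. For $\boldsymbol{z}$ in the $\ell$-th ball,
\[
|\upsilon_N^\ast(\boldsymbol{z},\boldsymbol{w})-\upsilon(\boldsymbol{z},\boldsymbol{w})| \le \tfrac1N\sum_i \Delta_\ell(\boldsymbol{\epsilon}^i) + |\upsilon_N^\ast(\bar{\boldsymbol{z}}_\ell,\boldsymbol{w})-\upsilon(\bar{\boldsymbol{z}}_\ell,\boldsymbol{w})| + \eta .
\]
Apply the pointwise weak LLN of Assumption~\ref{assumption:weak_LLN_and_dominated}(i) to the finitely many functions $F(\bar{\boldsymbol{z}}_\ell,\cdot)$ and $\Delta_\ell(\cdot)$. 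Then, with probability tending to one, the supremum over $\mathcal{Z}$ is at most $4\eta$. A union bound over the $K$ indices suffices here, since $K$ is fixed as $N\to\infty$. Since $\eta$ is arbitrary, this yields $\sup_{\mathcal{Z}}|\upsilon_N^\ast-\upsilon|\to0$ in probability.

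The main obstacle is in the second step. The LLN must be applied to the auxiliary functions $\Delta_\ell$, not only to $F(\boldsymbol{z},\cdot)$ at fixed $\boldsymbol{z}$. I would interpret ``pointwise weak LLN'' as holding for every integrable measurable function of $\boldsymbol{\epsilon}$, which is true in the i.i.d.\ and standard mixing settings. Note also that only the upper bound on the oscillation is needed, so no lower semicontinuity subtlety arises.
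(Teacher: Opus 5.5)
Your proposal is correct and is essentially the paper's approach: the paper simply invokes the uniform law of large numbers in Theorem~7.48 of \citet{shapiro2021lectures}, and you unpack that theorem's proof (oscillation functions $\Delta_\ell$ controlled by dominated convergence, a finite cover of the compact $\mathcal{Z}$, the LLN at finitely many points), with the strong LLN replaced by the weak LLN plus a fixed-size union bound. Your observation that the weak LLN must also hold for the auxiliary functions $\Delta_\ell$, and not only for $F(\boldsymbol{z},\cdot)$ at fixed $\boldsymbol{z}$, is exactly what the paper's one-line claim of a ``non-i.i.d.\ extension by pointwise weak LLN results'' implicitly assumes.
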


Under i.i.d. error samples \(\{\boldsymbol{\epsilon}^i\}_{i=1}^N\), the proof of Proposition~\ref{proposition:saa_convergence} follows from \citet[Theorem~7.48]{shapiro2021lectures}. The proof also extends to non-i.i.d. settings satisfying Assumption~\ref{assumption:weak_LLN_and_dominated}(i) by using pointwise weak LLN results.

\begin{assumption}\label{assumption:estimator}
The regression estimator \(\hat{Q}_N(\boldsymbol{z}, \boldsymbol{w})\) satisfies the following consistency properties: (i) \(\hat{Q}_N(\boldsymbol{z}, \boldsymbol{w}) \xrightarrow{P} Q^\ast(\boldsymbol{z}, \boldsymbol{w})\) uniformly over \((\boldsymbol{z},\boldsymbol{w})\in \mathcal{Z}\times \mathcal{W}\); and (ii) \(
    \frac{1}{N} \sum_{i=1}^N \left\| Q^\ast(\boldsymbol{z}^i, \boldsymbol{w}^i) - \hat{Q}_N(\boldsymbol{z}^i, \boldsymbol{w}^i) \right\| \xrightarrow{P} 0.
    \)
\end{assumption}

We verify that the three nonparametric regression models (i.e., $k$NN, CART, ReLU NNs) satisfy Assumption~\ref{assumption:estimator}(i) under mild conditions in Appendix \ref{subsubsec:ommitted_proof}. Assumption~\ref{assumption:estimator}(ii) is a direct result of Assumption~\ref{assumption:estimator}(i).



\begin{theorem}\label{lem:generic-er-dd-saa}
Suppose Assumptions~\ref{assumption:liptschitz}–\ref{assumption:estimator} hold. Then, for almost every \(\boldsymbol{w} \in \mathcal{W}\), the following results hold for the ER-DD-SAA problem~\eqref{eq:er-DD-SAA}: (i) \(\hat{\psi}_{N,\text{ER}} (\boldsymbol{w}) \xrightarrow{P} \psi ^\ast(\boldsymbol{w})\); (ii) \(\mathbb{D}(\hat{\mathcal{F}}_{N,\mathrm{ER}}(\boldsymbol{w}), \mathcal{F}^\ast(\boldsymbol{w})) \xrightarrow{P} 0\); and (iii) \(\sup_{\boldsymbol{z} \in \hat{\mathcal{F}}_{N,\mathrm{ER}}(\boldsymbol{w})}   \upsilon (\boldsymbol{z}, \boldsymbol{w}) \xrightarrow{P} \psi ^\ast(\boldsymbol{w})\).
\end{theorem}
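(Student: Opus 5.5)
The plan follows the standard ER-SAA argument of \citet{kannan2022data}: show that $\hat{\upsilon}_N(\cdot,\boldsymbol{w})$ converges in probability to $\upsilon(\cdot,\boldsymbol{w})$ uniformly on $\mathcal{Z}$. Conclusions (i)--(iii) then follow from standard epi-convergence and uniform-convergence arguments for compact $\mathcal{Z}$, in the spirit of \citet[Theorem~5.3]{shapiro2021lectures}. The uniform bound is obtained by the triangle inequality
\[
\sup_{\boldsymbol{z}\in\mathcal{Z}}|\hat{\upsilon}_N(\boldsymbol{z},\boldsymbol{w})-\upsilon(\boldsymbol{z},\boldsymbol{w})| \le \sup_{\boldsymbol{z}\in\mathcal{Z}}|\hat{\upsilon}_N(\boldsymbol{z},\boldsymbol{w})-\upsilon^\ast_N(\boldsymbol{z},\boldsymbol{w})| + \sup_{\boldsymbol{z}\in\mathcal{Z}}|\upsilon^\ast_N(\boldsymbol{z},\boldsymbol{w})-\upsilon(\boldsymbol{z},\boldsymbol{w})|.
\]
The second term vanishes in probability for almost every $\boldsymbol{w}$ by Proposition~\ref{proposition:saa_convergence}.

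For the first term, Assumption~\ref{assumption:liptschitz} with $\bar L:=\sup_{\boldsymbol z}L(\boldsymbol z)<\infty$ gives a pointwise bound. Since $\boldsymbol{y}^i=Q^\ast(\boldsymbol z^i,\boldsymbol w^i)+\boldsymbol\epsilon^i\in\mathcal Y$, we have $Q^\ast(\boldsymbol z,\boldsymbol w)+\boldsymbol\epsilon^i\in\mathcal Y$ (assuming this is in $\mathcal Y$, or treating it via the projection), and projection onto a closed convex $\mathcal{Y}$ is nonexpansive. Hence
\[
|\hat{\upsilon}_N-\upsilon^\ast_N| \le \frac{\bar L}{N}\sum_{i=1}^N \big\|\hat Q_N(\boldsymbol z,\boldsymbol w)+\hat{\boldsymbol\epsilon}^i - Q^\ast(\boldsymbol z,\boldsymbol w)-\boldsymbol\epsilon^i\big\|.
\]
Substituting $\hat{\boldsymbol\epsilon}^i-\boldsymbol\epsilon^i = Q^\ast(\boldsymbol z^i,\boldsymbol w^i)-\hat Q_N(\boldsymbol z^i,\boldsymbol w^i)$ bounds the right-hand side by
\[
\bar L\,\sup_{\mathcal Z\times\mathcal W}\|\hat Q_N-Q^\ast\| + \frac{\bar L}{N}\sum_{i=1}^N\|Q^\ast(\boldsymbol z^i,\boldsymbol w^i)-\hat Q_N(\boldsymbol z^i,\boldsymbol w^i)\|.
\]
This bound is uniform in $\boldsymbol z$, and both terms tend to zero in probability by Assumption~\ref{assumption:estimator}(i) and (ii).

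With uniform convergence in probability established, (i) follows from $|\min_{\mathcal Z}\hat\upsilon_N-\min_{\mathcal Z}\upsilon|\le\sup_{\mathcal Z}|\hat\upsilon_N-\upsilon|$. Here $\upsilon(\cdot,\boldsymbol w)$ is continuous by Assumption~\ref{assumption:weak_LLN_and_dominated}(ii)--(iii) and dominated convergence, so $\mathcal F^\ast(\boldsymbol w)$ is nonempty and compact. For (iii), take any $\hat{\boldsymbol z}\in\hat{\mathcal F}_{N,\mathrm{ER}}$. Then $\upsilon(\hat{\boldsymbol z})\le\hat\upsilon_N(\hat{\boldsymbol z})+\sup|\hat\upsilon_N-\upsilon| = \hat\psi_{N,\mathrm{ER}}+\sup|\cdot|$, and combining this with (i) and $\upsilon(\hat{\boldsymbol z})\ge\psi^\ast$ gives the claim.

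For (ii), argue by contradiction through a continuity/compactness argument. For any $\eta>0$, continuity of $\upsilon$ and compactness of $\mathcal Z$ give a $\delta>0$ such that $\upsilon(\boldsymbol z)\ge\psi^\ast+\delta$ whenever $\mathrm{dist}(\boldsymbol z,\mathcal F^\ast)\ge\eta$, since $\{\boldsymbol z\in\mathcal Z:\mathrm{dist}(\boldsymbol z,\mathcal F^\ast)\ge\eta\}$ is compact and $\upsilon>\psi^\ast$ on it. Consequently the event $\mathbb D(\hat{\mathcal F}_{N,\mathrm{ER}},\mathcal F^\ast)\ge\eta$ is contained in the event $\sup_{\hat{\mathcal F}}\upsilon-\psi^\ast\ge\delta$, whose probability vanishes by (iii). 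Nonemptiness of $\hat{\mathcal F}_{N,\mathrm{ER}}$ should follow from lower semicontinuity of the piecewise-constant regressors; where this fails, one works with $\varepsilon$-optimal solutions.

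The main obstacle is the first-term bound. It needs care with the projection step, namely nonexpansiveness and the requirement that $Q^\ast(\boldsymbol z,\boldsymbol w)+\boldsymbol\epsilon^i$ lie in the support (assumed, or handled with an additional term). It also needs measurability of the suprema. Everything else is standard.
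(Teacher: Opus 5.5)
Your proposal is correct and follows essentially the route the paper relies on. The paper omits the proof and defers to Theorem~1 of \citet{sun2026contextual}, an adaptation of the ER-SAA argument of \citet{kannan2022data}, and your steps match it:
\begin{itemize}
\item split $\hat{\upsilon}_N-\upsilon$ through the full-information SAA $\upsilon_N^\ast$;
\item bound the first gap uniformly in $\boldsymbol z$ by the Lipschitz modulus times the two error terms of Assumption~\ref{assumption:estimator};
\item invoke Proposition~\ref{proposition:saa_convergence} for the second gap;
\item conclude (i)--(iii) from uniform convergence on the compact $\mathcal Z$.
\end{itemize}

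Three small repairs are needed:
\begin{itemize}
\item The membership $Q^\ast(\boldsymbol z,\boldsymbol w)+\boldsymbol\epsilon^i\in\mathcal Y$ comes from the modeling assumption $\boldsymbol Y=Q^\ast(\boldsymbol z,\boldsymbol w)+\boldsymbol\epsilon\in\mathcal Y$, not from $\boldsymbol y^i\in\mathcal Y$.
\item If $\mathcal Y$ is not convex, replace nonexpansiveness with $\|\operatorname{Proj}_{\mathcal Y}(\boldsymbol a)-\boldsymbol b\|\le 2\|\boldsymbol a-\boldsymbol b\|$ for $\boldsymbol b\in\mathcal Y$.
\item Nonemptiness of $\hat{\mathcal F}_{N,\mathrm{ER}}(\boldsymbol w)$ is presupposed by the statement, not a consequence of the piecewise-constant regressors. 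For example, CART's half-open leaves need not make $\hat{\upsilon}_N$ lower semicontinuous.
\end{itemize}
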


The proof of Theorem \ref{lem:generic-er-dd-saa} mainly follows from Theorem 1 in \cite{sun2026contextual} and we omit it here. The above theorem states that as the sample size $N$ increases, the optimal solution for the ER-DD-SAA problem~\eqref{eq:er-DD-SAA} converges in probability to that of the true problem~\eqref{eq:dd-csp}, and the objective value of the true problem~\eqref{eq:dd-csp} with the optimal solution of the ER-DD-SAA problem~\eqref{eq:er-DD-SAA} converges to the true optimal objective value in probability.

\section{Numerical Results}\label{sec:experiments}
In this section, we consider two representative problems: a newsvendor problem with continuous pricing decision in Section~\ref{sec:newsvendor} and a two-stage facility location problem with binary first-stage decision in
Section~\ref{sec:facility_location_problem}.  Before solving the optimization problem, we tune the hyperparameters in the nonparametric regression models (e.g., $k$ in $k$NN, tree structure in CART, and network architecture in ReLU NNs). The tuning procedures are described in Appendix~\ref{sec:parameters_tuning}. All
reported results are averaged over five independent runs generated using
different random seeds. All regression models are implemented using
\texttt{scikit-learn} version 1.2.1, and all optimization problems are solved
using Gurobi Optimizer 12.0.3. The entire pipeline is executed in Python 3.10.13
on the high-performance computing resources of the Ohio Supercomputer Center
\citep{osc1987}. All approaches use an optimality gap tolerance of \(10^{-4}\) and a three-hour time limit.

\subsection{Newsvendor Problem with Pricing}\label{sec:newsvendor}
We consider the following newsvendor problem with pricing
\begin{align}\label{eq:newsvendor_cost}
 \min_{p, q\in\mathcal{Z}} \mathbb{E}_Y[f q - pY + h(q-Y)^+ + b(Y-q)^+| P=p, W=w],
\end{align}
where the DM decides the selling price \( p \in [0,20]\) and order quantity \( q \in \mathbb{Z}_{+}\) to minimize the expected total cost. The demand \( Y \) depends on two factors: the pricing decision \( p \), and a covariate \( w \) (e.g., local temperature), which is observed before solving the optimization problem. Following \cite{bertsimas2020predictive}, the ground-truth relationship between the uncertain demand $Y$, pricing decision $p$, and the covariate $w$ is assumed to have the following form: 
\begin{equation}\label{eq:ground_truth}
    Y = \frac{500}{1+e^{p-10}} \max \{0, 10 + 2(w-60)\} + \epsilon
\end{equation}
where \( \epsilon \sim N(0, 3^2) \) is a normally distributed noise term with standard deviation \( \sigma = 3 \). 
The unit procurement cost is \(f=4\). To protect against shortage, the retailer must place an emergency order at a higher unit cost \(b=8>f\) if the realized demand exceeds the initial inventory level \(q\). Any leftover inventory incurs a unit holding cost of \(h=1\). 
We refer readers to Appendix \ref{subsec:newsvendor_full_formulation} for the full formulations of ER-DD-SAA with the three nonparametric regression models in this setting. 

We generate a synthetic dataset \(\mathcal{D}_N = \{ (p^i, w^i, y^i) \}_{i=1}^{N}\) with \(N = 10000\) observations. To generate each data point, the price \( p^i \) is sampled from a log-normal distribution \( \log(p) \sim N(2.3, 0.25^2) \), the temperature \( w^i \) is sampled from \( N(72, 5^2) \), and the corresponding demand $y^i$ is constructed from the ground truth demand function \eqref{eq:ground_truth}. To evaluate model performance, we
compute the out-of-sample (OOS) cost of the obtained optimal solution. For each
test instance, we generate $1000$ independent OOS demand realizations from the
ground-truth demand model evaluated at the obtained solution, with independently
sampled noise terms added to each realization.

\paragraph{Comparison between different nonparametric regression models.}
We compare \(k\)NN, CART, and ReLU NNs with a linear regression benchmark:
\(
Y=\tau_p p+\tau_w w+\rho + \epsilon,
\)
estimated by ordinary least squares on the same training data \(\mathcal{D}_N\). For \(k\)NN and CART, the products involving pricing decision \(p\) and binary variables \(\boldsymbol t\) and \(\boldsymbol R\) can be linearized exactly using McCormick envelopes. The resulting formulations are therefore MILPs. In contrast, the linear regression benchmark and ReLU NNs involve products between continuous pricing decision \(p\) and continuous prediction variables \(\hat{y}\), leading to nonconvex formulations. Table~\ref{tab:newsvendor_models_comparison} reports the average training time (Train), optimization time (Opt.), both measured in seconds, and OOS cost. 

Table~\ref{tab:newsvendor_models_comparison} shows that CART yields the shortest optimization time across all sample sizes, and it achieves better OOS performance than linear regression and $k$NN as the sample size increases. The \(k\)NN-based approach generally improves OOS performance relative to linear regression and CART for smaller sample sizes, but it requires substantially longer optimization time. For \(N=1200\), only \(3/5\) instances using \(k\)NN are solved to optimality. ReLU NNs achieve the best OOS performance with manageable optimization time across all tested sample sizes. Comparisons of our proposed formulations with other benchmark formulations are presented in Appendix~\ref{sec:knn_comparison}--~\ref{sec:nn_comparison_newsvendor}.

\begin{table}[htbp]
\centering
\caption{Performance comparison across different regression models}
\label{tab:newsvendor_models_comparison}
\resizebox{\textwidth}{!}{%
\begin{tabular}{c|ccc|ccc|ccc|ccc}
\hline
\multirow{2}{*}{$N$}
& \multicolumn{3}{c|}{Linear Regression}
& \multicolumn{3}{c|}{$k$NN~\eqref{eq:single_level_binary}}
& \multicolumn{3}{c|}{CART~\eqref{eq:er_dd_saa_cart}}
& \multicolumn{3}{c}{ReLU NNs~\eqref{eq:nn_erddsaa}} \\
\cline{2-4} \cline{5-7} \cline{8-10} \cline{11-13}
& Train & Opt. & OOS Cost
& Train & Opt. & OOS Cost
& Train & Opt. & OOS Cost
& Train & Opt. & OOS Cost \\ \hline
600  & 0.02 & 26.90  & -49,498.94 & 6.54 & 553.18   & -53,383.53 & \textbf{0.00} & \textbf{6.43} & -49,145.96 & 25.84 & 11.48  & \textbf{-54,957.83} \\
800  & 0.02 & 44.56  & -49,373.91 & 6.54 & 1,195.71 & -52,814.03 & \textbf{0.00} & \textbf{6.84} & -51,124.38 & 35.00 & 26.66  & \textbf{-55,070.71} \\
1000 & 0.02 & 75.50  & -49,350.27 & 6.73 & 4,287.74 & -53,725.28 & \textbf{0.00} & \textbf{6.97} & -51,500.96 & 39.85 & 48.15  & \textbf{-55,136.74} \\
1200 & 0.01 & 100.32 & -49,188.07 & 6.37 & 7,646.40 & -50,876.30 & \textbf{0.00} & \textbf{7.34} & -52,216.36 & 44.69 & 103.76 & \textbf{-55,132.69} \\ \hline
\end{tabular}%
}
\end{table}

\subsection{Two-stage facility location problem}\label{sec:facility_location_problem}
We next consider a two-stage facility location problem with decision-dependent demand. Let $\Gamma_1$ denote the set of candidate facilities and $\Gamma_2$ the set of customer sites. Here, we consider a problem with \(|\Gamma_1|=20, \, |\Gamma_2|=10\). Let $z_i \in \{0, 1\}$ be the binary variable indicating whether facility $i \in \Gamma_1$ is chosen, with $f_i$ representing its fixed opening cost. The two-stage facility location problem is:
\begin{align}\label{eq:two_stage_facility_location_problem}
    \min_{\boldsymbol{z} \in \{0,1\}^{|\Gamma_1|}} \quad
    & \sum_{i\in \Gamma_1} f_i z_i
    + \mathbb{E}\!\left[
        h\big(\boldsymbol{z},\boldsymbol{Y}(\boldsymbol{z},\boldsymbol{w})\big)
        \mid \boldsymbol{Z}=\boldsymbol{z}, \boldsymbol{W}=\boldsymbol{w}
    \right] 
\end{align}
where the second-stage recourse function $h(\boldsymbol{z},\boldsymbol{Y}(\boldsymbol{z}, \boldsymbol{w}))$ and the full formulations of ER-DD-SAA under this setting are presented in Appendix \ref{subsec:facility_formulation}.

We assume that the demand $\boldsymbol{Y}$ depends on the number of open facilities $\sum_{j} z_j$ and gas-price covariate $e$, which follows the nonlinear ground-truth function: 
\begin{equation}
Y = \chi_1 + \chi_2 \left[ \log(\chi_3+1) \frac{1 - \exp(-\sum_{j} z_j/7)}{1 - \exp(-\chi_3/7)} \right] \max\{0, 5e - 10\} + \epsilon.
\end{equation}
This function ensures that demand increases with the number of open facilities $\sum_{j} z_j$, but the increase becomes slower as more facilities are opened. Demand also increases with the gas price $e$ up to a threshold.  The detailed parameter setup is described in Appendix \ref{subsec:facility_formulation}.
We generate synthetic dataset \(\mathcal{D}_N=\{(\sum_{j} z_j^i, e^i, y^i)\}_{i=1}^N\) with a size of \(N=10000\). For each sample, the gas price and the number of open facilities are sampled according to $e \sim N(4,1)$ and $ \sum_{j} z_j \sim U(0, 30)$, respectively, and the random noise is generated as $\epsilon \sim N(0,2^2)$. 


\paragraph{Comparisons between different approaches for ER-DD-SAA with \(k\)NN.}
Table~\ref{tab:distance_algorithm_comparison} compares the performance of Gurobi, vanilla Bender's decomposition, and BD-CG (Algorithm~\ref{alg:row_generation_knn}) under formulation~\eqref{eq:distance_compare_tie}. In Step~5 of Algorithm~\ref{alg:row_generation_knn}, we also add critical observations whose demand values exceed those in the current true \(k\)NN set to set \(J\). Here, ``TL'' indicates that 
the time limit was reached. From Table~\ref{tab:distance_algorithm_comparison}, Gurobi and vanilla Benders decomposition both fail to find a feasible solution within the time limit for \(N\ge 1000\), whereas BD-CG solves all instances within the time limit. This demonstrates the efficiency of our BD-CG algorithm in solving formulation \eqref{eq:distance_compare_tie}.

\begin{table}[htbp]
\centering
\caption{Computational comparison for ER-DD-SAA with \(k\)NN under formulation~\eqref{eq:distance_compare_tie} when $k$=1.}
\label{tab:distance_algorithm_comparison}
\resizebox{\textwidth}{!}{%
\begin{tabular}{c|rrr|rrr|rrr}
\hline
\multirow{2}{*}{$N$} & \multicolumn{3}{c|}{Gurobi} & \multicolumn{3}{c|}{Vanilla BD} & \multicolumn{3}{c}{BD-CG (Algorithm \ref{alg:row_generation_knn})} \\
\cline{2-4} \cline{5-7} \cline{8-10}
& \multicolumn{1}{c}{IS Cost} & \multicolumn{1}{c}{Gap} & \multicolumn{1}{c|}{Time (s)} & \multicolumn{1}{c}{IS Cost} & \multicolumn{1}{c}{Gap} & \multicolumn{1}{c|}{Time (s)} & \multicolumn{1}{c}{IS Cost} & \multicolumn{1}{c}{Gap} & \multicolumn{1}{c}{Time (s)} \\
\hline
500  & -2,017,506.78 & 0.66 & 8,511.47  & -2,016,845.65 & 0.09 & 10,263.96 & -2,017,506.78 & 0.00 & 357.85   \\
1000 & - & - & TL & - & - & TL & -2,010,574.63 & 0.01 & 1,568.19 \\
1500 & - & - & TL & - & - & TL & -2,002,573.35 & 0.00 & 3,881.70 \\
2000 & - & - & TL & - & - & TL & -1,992,462.15 & 0.00 & 9,026.81 \\ \hline
\end{tabular}%
}
\end{table}

We further compare our proposed formulation~\eqref{eq:distance_compare_tie} solved by BD-CG (Algorithm \ref{alg:row_generation_knn}), bilevel formulation \eqref{eq:single_level_binary} solved by Gurobi and a benchmark proposed in \cite{liu2023solving} solved by Gurobi. In particular, in \cite{liu2023solving}, constraints \eqref{eq:random_select_knn_strict1} and \eqref{eq:random_select_knn_strict3} are replaced by
\begin{equation}\label{eq:zhihai}
    s_i - s_j \le M(t_j - t_i + 1), 
    \quad \forall i,j \in [N],\ i \ne j.
\end{equation} Table~\ref{tab:benders_bdcg_comparison} reports the average IS cost, optimality gap, and runtime.  From Table~\ref{tab:benders_bdcg_comparison}, BD-CG also consistently outperforms the bilevel formulation \eqref{eq:single_level_binary} and the benchmark~\eqref{eq:zhihai} solved by Gurobi directly, which often reaches the time limit with a large gap or without finding a feasible solution. 

\begin{table}[htbp]
\centering
\caption{Computational comparison of bilevel formulation \eqref{eq:single_level_binary}, benchmark \eqref{eq:zhihai}, and BD-CG (Algorithm \ref{alg:row_generation_knn}).}
\label{tab:benders_bdcg_comparison}
\resizebox{\textwidth}{!}{%
\begin{tabular}{l|rrr|rrr|rrr}
\hline
\multirow{2}{*}{$N$} & \multicolumn{3}{c|}{Bilevel~\eqref{eq:single_level_binary}}
& \multicolumn{3}{c|}{Benchmark~\eqref{eq:zhihai}}
& \multicolumn{3}{c}{BD-CG (Algorithm \ref{alg:row_generation_knn})} \\
\cline{2-4} \cline{5-7} \cline{8-10}
& \multicolumn{1}{c}{IS Cost} & \multicolumn{1}{c}{Gap} & \multicolumn{1}{c|}{Time (s)}
& \multicolumn{1}{c}{IS Cost} & \multicolumn{1}{c}{Gap} & \multicolumn{1}{c|}{Time (s)}
& \multicolumn{1}{c}{IS Cost} & \multicolumn{1}{c}{Gap} & \multicolumn{1}{c}{Time (s)} \\ \hline
500  & -2,017,506.78 & 0.00  & 794.40   & -2,017,506.78 & 0.00   & 3,807.37 & -2,017,506.78 & 0.00 & 357.85   \\
1000 & -2,010,574.63 & 0.00  & 4,792.86 & -1,507,570.12 & 101.60 & TL       & -2,010,574.63 & 0.01 & 1,568.19 \\
1500 & -1,994,364.92 & 9.35  & 9,002.10 & --            & --     & TL       & -2,002,573.35 & 0.00 & 3,881.70 \\
2000 & -1,978,302.08 & 23.48 & TL       & --            & --     & TL       & -1,992,462.15 & 0.00 & 9,026.81 \\ \hline
\end{tabular}%
}
\end{table}


\paragraph{Comparison between different nonparametric regression models.}
Table~\ref{tab:facility_models_comparison} compares linear regression, \(k\)NN, CART, and ReLU NNs within the ER-DD-SAA framework. 
Table~\ref{tab:facility_models_comparison} shows that ER-DD-SAA with linear regression is the fastest but yields the weakest OOS performance. CART provides a favorable balance between solution quality and computational time. The \(k\)NN model improves upon linear regression but becomes increasingly expensive as \(N\) grows. ReLU NNs achieve the best OOS performance for all tested sample sizes and are generally faster to solve than \(k\)NN. Additional numerical results on different CART formulations can be found in Appendix~\ref{sec:facility_cart_comparison}.

\begin{table}[htbp]
\centering
\caption{Computational comparison between different nonparametric regression models.}
\label{tab:facility_models_comparison}
\resizebox{\textwidth}{!}{%
\begin{tabular}{c|rrr|rrr|rrr|rrr}
\hline
\multirow{2}{*}{$N$}
& \multicolumn{3}{c|}{Linear Regression}
& \multicolumn{3}{c|}{$k$NN (BD-CG, $k=1$)}
& \multicolumn{3}{c|}{CART~\eqref{eq:er_dd_saa_cart}}
& \multicolumn{3}{c}{ReLU NNs~\eqref{eq:nn_erddsaa}} \\
\cline{2-4} \cline{5-7} \cline{8-10} \cline{11-13}
& \multicolumn{1}{c}{Train} & \multicolumn{1}{c}{Opt.} & \multicolumn{1}{c|}{OOS Cost}
& \multicolumn{1}{c}{Train} & \multicolumn{1}{c}{Opt.} & \multicolumn{1}{c|}{OOS Cost}
& \multicolumn{1}{c}{Train} & \multicolumn{1}{c}{Opt.} & \multicolumn{1}{c|}{OOS Cost}
& \multicolumn{1}{c}{Train} & \multicolumn{1}{c}{Opt.} & \multicolumn{1}{c}{OOS Cost} \\ \hline
500  & \textbf{0.00} & 2.81           & -1,815,424.99 & 5.98 & 357.85   & -1,849,296.57 & 9.47  & 20.05 & -1,852,206.52 & 20.62 & \textbf{16.44} & \textbf{-1,855,540.72} \\
1000 & \textbf{0.00} & \textbf{5.88}  & -1,815,424.99 & 6.97 & 1,568.19 & -1,850,493.19 & 9.67  & 49.21 & -1,853,490.67 & 21.74 & 525.06         & \textbf{-1,855,689.25} \\
1500 & \textbf{0.06} & \textbf{9.17}  & -1,815,424.99 & 8.29 & 3,881.70 & -1,851,152.89 & 9.69  & 26.19 & -1,852,041.71 & 27.09 & 51.84          & \textbf{-1,856,184.15} \\
2000 & \textbf{0.00} & \textbf{12.50} & -1,815,424.99 & 5.86 & 9,026.81 & -1,854,126.37 & 14.30 & 90.62 & -1,855,618.67 & 44.38 & 94.94          & \textbf{-1,856,029.75} \\ \hline
\end{tabular}%
}
\end{table}





\section{Conclusions}
This paper studied contextual stochastic programming with decision-dependent uncertainty, where the uncertainty depends jointly on the decision variables and contextual information. We proposed a unified ER-DD-SAA framework that integrates learned nonparametric regression models, including \(k\)NN, CART, and ReLU NNs, into the downstream stochastic optimization problem. Under suitable structural conditions, we derived exact MILP/MINLP reformulations and established consistency and asymptotic optimality of the proposed ER-DD-SAA framework with three nonparametric regression models. For ER-DD-SAA with \(k\)NN, we developed both a pairwise distance comparison formulation and a bilevel formulation. To improve computational tractability, we further designed a tailored decomposition algorithm that combines Bender's decomposition with constraint generation. Numerical experiments on newsvendor and two-stage facility location problems demonstrate the effectiveness of the proposed methods in terms of optimization performance and highlight the trade-off among predictive flexibility, optimization quality, and computational efficiency when embedding different regression models into ER-DD-SAA. Future work includes improving the computational efficiency of \(k\)NN-based formulations under nonlinear distance metrics, extending the framework to more complex regression models, and incorporating multiple predictors and richer covariate structures.


  


\bibliography{mybib}

\newpage
\appendix
\section{Omitted Formulations}\label{sec:omitted_formulations}
\subsection{ER-DD-SAA with \(k\)NN}
\label{appendix:ranking}
\textbf{Distance Ranking Formulation.}
In this formulation, we introduce ranks \(p=1,\ldots,N\) to explicitly represent the ranking of the distances \(s_j, \, \forall j \in [N]\) from smallest to largest. The observations assigned to the first \(k\) ranks are then identified as the \(k\) nearest neighbors. We define binary variables \(o_{i,p}\in\{0,1\}\), such that \(o_{i,p}=1\) if observation \(i\) is assigned to rank \(p\), and \(o_{i,p}=0\) otherwise. Then ER-DD-SAA with \(k\)NN can be formulated as
\allowdisplaybreaks
\begin{subequations}
\begin{align}
\min_{\boldsymbol d, \boldsymbol{o}, \boldsymbol{s}, \boldsymbol{t}, \boldsymbol z}\quad & \frac{1}{N}\sum_{i=1}^N
    c\!\left(
        \boldsymbol z,
        \operatorname{Proj}_{\mathcal Y}\!\left(
            \frac{1}{k}\sum_{j=1}^N t_j y^j + \hat{\epsilon}^i
        \right)
    \right) \\
    \text{s.t.} \quad 
    & \eqref{eq:decision_feasibility}--\eqref{eq:random_select_knn_p_norm} \\
    & \sum_{p=1}^N o_{i,p} = 1, \quad \forall i \in [N], \label{eq:ordering_assign}\\
    & \sum_{i=1}^N o_{i,p} = 1, \quad \forall p \in [N],  \label{eq:ordering_assigN_1} \\
    & \sum_{i=1}^N s_i  o_{i,p}  \leq \sum_{i=1}^N s_i  o_{i,p+1}, \quad \forall p\in [N-1],  \label{eq:ordering_distance_order} \\
    & t_i = \sum_{p=1}^k o_{i, p}, \quad \forall i\in [N], \label{eq:ordering_knn_selection} \\
    & o_{i,p} \in \{0,1\}, \quad \forall i\in [N], p \in [N],
\end{align}
\label{eq:ordering}
\end{subequations}
where constraints~\eqref{eq:ordering_assign} assign each historical observation to exactly one rank, and constraints~\eqref{eq:ordering_assigN_1} ensure that each rank is occupied by exactly one observation. Constraints~\eqref{eq:ordering_distance_order} enforce a non-decreasing ranking of the distances. Constraints~\eqref{eq:ordering_knn_selection} then identify the \(k\) nearest neighbors by determining whether observation \(i\) is assigned to one of the first \(k\) ranks. Let \(\overline{s}_i\) be a valid upper bound on \(s_i\), so that \(s_i \in [0, \overline{s}_i]\). The bilinear terms \(s_i  o_{i,p}\) can be linearized using McCormick envelopes \citep{mccormick1976computability} with a set of constraints
\begin{equation}\label{eq:mccormick}
0\le m_{i,p}\le s_i, \, 
s_i+\bar{s}_i(o_{i,p}-1)\le m_{i,p}\le \bar{s}_io_{i,p},\ \forall i,p\in[N].
\end{equation}

We show that this distance ranking formulation (23) is equivalent to pairwise distance comparison formulation (5) in the next theorem. \begin{theorem}\label{thm:knn_equivalence}
    Model \eqref{eq:distance_compare_tie} and Model \eqref{eq:ordering} are equivalent.
\end{theorem}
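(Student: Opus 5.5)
We establish equivalence by showing that both models have the same feasible set when projected onto the variables $(\boldsymbol z,\boldsymbol s,\boldsymbol t)$. The objective depends only on $(\boldsymbol z,\boldsymbol t)$ and is identical in both models, so equal projections give equal optimal values and a direct correspondence of optimal solutions. Concretely, we prove two directions: (a) every feasible $(\boldsymbol z,\boldsymbol s,\boldsymbol d,\boldsymbol t)$ of Model~\eqref{eq:distance_compare_tie} extends to a feasible $(\boldsymbol z,\boldsymbol s,\boldsymbol o,\boldsymbol t)$ of Model~\eqref{eq:ordering}; and (b) the converse. A useful intermediate characterization is that, for fixed $\boldsymbol s$ from \eqref{eq:random_select_knn_p_norm}, the set of admissible $\boldsymbol t$ in either model is
\[
\mathcal T(\boldsymbol s)=\Big\{\boldsymbol t\in\{0,1\}^N:\textstyle\sum_i t_i=k,\ s_i\le s_j \text{ whenever } t_i=1,\ t_j=0\Big\},
\]
that is, the indicator vectors of $k$-subsets that form a valid $k$NN set with ties broken arbitrarily.

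For Model~\eqref{eq:ordering}, constraints \eqref{eq:ordering_assign}--\eqref{eq:ordering_assigN_1} make $\boldsymbol o$ a permutation matrix $\sigma$. Constraints \eqref{eq:ordering_distance_order} say that $s_{\sigma(1)}\le\dots\le s_{\sigma(N)}$, and \eqref{eq:ordering_knn_selection} sets $t_i=1$ exactly when $\sigma^{-1}(i)\le k$. This yields $\boldsymbol t\in\mathcal T(\boldsymbol s)$. Conversely, given any $\boldsymbol t\in\mathcal T(\boldsymbol s)$, we can sort the selected indices by distance into ranks $1,\dots,k$ and the remaining indices into ranks $k+1,\dots,N$. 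The monotone order survives at the boundary between ranks $k$ and $k+1$ because of the defining property of $\mathcal T(\boldsymbol s)$. The McCormick linearization \eqref{eq:mccormick} is exact because the $o_{i,p}$ are binary, so it need not be treated separately.

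For Model~\eqref{eq:distance_compare_tie}, the direction $\mathcal T(\boldsymbol s)\subseteq$ feasible set goes by construction. Given $\boldsymbol t\in\mathcal T(\boldsymbol s)$, fix a total order $\sigma$ consistent with the distances in which the selected points occupy the first $k$ positions, as above. Define $d_{ij}=1$ if and only if $\sigma^{-1}(j)\le\sigma^{-1}(i)$. Then \eqref{eq:random_select_knn_strict1} holds, including the tie cases, and $\sum_j d_{ij}=\sigma^{-1}(i)$. Constraint \eqref{eq:random_select_knn_strict3}, with $M_2=N-k$, then reads $t_i=1 \Rightarrow \text{rank}\le k$ and $t_i=0\Rightarrow \text{rank}\ge k$. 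Both follow from the rank assignment.

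The reverse direction is the main obstacle. We must show that any feasible $\boldsymbol d$ induces a consistent ordering, even though $\boldsymbol d$ need not be transitive in tie cases. The plan is to use only what \eqref{eq:random_select_knn_strict1} forces. If $s_i<s_j$, then $d_{ij}=0$. Hence for every $i$, $r_i:=\sum_j d_{ij}$ satisfies $|\{j:s_j<s_i\}|+1\le r_i\le|\{j:s_j\le s_i\}|$. Now suppose for contradiction that $t_i=1$, $t_j=0$ and $s_i>s_j$. Constraint \eqref{eq:random_select_knn_strict3} gives $r_i\le k$, while the bound above gives $r_i\ge |\{l:s_l<s_i\}|+1$. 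Hence at most $k-1$ points are strictly closer than $i$. Since $j$ is one of them, the counting must then contradict $\sum_l t_l=k$ together with the lower bounds $r_l\ge k$ for unselected $l$. The cleanest route is to show that every $l$ with $s_l\le s_j$ must have $t_l=1$, so that too many points are selected. The case analysis with ties requires care, and we would handle it by comparing the multiset of ranks $\{r_l\}$ against the sorted distance levels.

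Once $\boldsymbol t\in\mathcal T(\boldsymbol s)$ is established in both directions for both models, the equivalence follows.
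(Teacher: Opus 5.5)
Your architecture differs from the paper's. The paper maps solutions directly. From $\boldsymbol o^\ast$ it sets $d_{ij}=\sum_p o^\ast_{i,p}\sum_{q\le p}o^\ast_{j,q}$, which is exactly your $d_{ij}=\mathbb{I}\{\sigma^{-1}(j)\le\sigma^{-1}(i)\}$. From $\boldsymbol d^\ast$ it builds $\boldsymbol o$ by ranking points according to the row sums $\sum_j d^\ast_{ij}$. You instead show that both projections onto $(\boldsymbol z,\boldsymbol s,\boldsymbol t)$ coincide with $\{\boldsymbol t\in\mathcal T(\boldsymbol s)\}$.

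Three of your four inclusions are fine, modulo the $M_2$ remark at the end. The fourth, feasibility in Model~\eqref{eq:distance_compare_tie} $\Rightarrow$ $\boldsymbol t\in\mathcal T(\boldsymbol s)$, is the step you flag as the main obstacle, and you leave it as a plan. The plan is also off target. The claim that every $l$ with $s_l\le s_j$ has $t_l=1$ already includes $l=j$, so proving it is no easier than the contradiction you want. Neither $\sum_l t_l=k$ nor a comparison of rank multisets is needed. As written, the proof is therefore incomplete at its only nontrivial point.

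The missing observation is to apply your two-sided bound $|\{l:s_l<s_m\}|+1\le r_m\le|\{l:s_l\le s_m\}|$ to the unselected point $j$, not just to $i$. Suppose $t_i=1$, $t_j=0$ and $s_i>s_j$. Then \eqref{eq:random_select_knn_strict3} gives $r_i\le k\le r_j$, whereas
\[
r_i\ \ge\ 1+|\{l:s_l<s_i\}|\ \ge\ 1+|\{l:s_l\le s_j\}|\ \ge\ 1+r_j\ \ge\ k+1,
\]
a contradiction. Together with \eqref{eq:random_select_knn_sum_t} this gives $\boldsymbol t\in\mathcal T(\boldsymbol s)$.

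Once this is added, your route is complete, and in one respect cleaner than the paper's. The paper's converse needs the same inequality ($s_i>s_j\Rightarrow r_i>r_j$) to make the row-sum ranking distance-monotone. It must also break ties among equal row sums in favor of selected points, a step it leaves implicit; equal row sums do occur when $\boldsymbol d$ is cyclic on a tie class. Because you rebuild the order from $(\boldsymbol s,\boldsymbol t)$ directly, that issue never arises.

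Finally, with $M_2=N-k$, constraint \eqref{eq:random_select_knn_strict3} also imposes $r_i\ge 2k-N$ on selected points. So your reading ``$t_i=1\Rightarrow r_i\le k$'' is exact only when $k\le (N+1)/2$, and the same holds for your construction, which puts a selected point at rank~$1$. In general one needs $M_2\ge\max\{N-k,\,k-1\}$.
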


\begin{proof}\label{proof:knn}
    \begin{enumerate}
    \item Suppose \((\boldsymbol{s}^\ast, \boldsymbol{o}^\ast, \boldsymbol{t}^\ast)\) is an optimal solution to Model \eqref{eq:ordering}. Below, we construct a feasible solution to Model \eqref{eq:distance_compare} with the same objective function value. We set \(s_i=s_i^\ast,\ t_i = t_i^\ast,\ \forall i \in [N]\) and \( d_{ij}= \sum_{p=1}^N o^\ast_{i,p} \sum_{q=1}^p o^\ast_{j,q}, \ \forall i\in [N],\ j\in[N]\).

Due to constraints~\eqref{eq:ordering_assign} and \eqref{eq:ordering_assigN_1}, \( \sum_{q=1}^p o^\ast_{i,q} \in \{0,1\}, \text{and} \sum_{p=1}^N o^\ast_{j,p} \sum_{q=1}^p o^\ast_{i,q} \in \{0,1\},\) we have \( d_{ij} \in \{0,1\}.\)

From constraint \eqref{eq:ordering_distance_order}, if \(s_i^\ast \leq s_j^\ast\), then for the unique index \(p^\ast\) such that \(o^\ast_{i,p^\ast} = 1\), we have \(\sum_{q=1}^{p^\ast} o^\ast_{j,q} = 0\). Together with constraint~\eqref{eq:ordering_assign} and \eqref{eq:ordering_assigN_1}, for $p^\prime \neq p^\ast$, we have $o_{i,p^\prime}=0$, this implies that
\[
\sum_{p=1}^N o^\ast_{i,p} \sum_{q=1}^p o^\ast_{j,q} = 0 \quad \text{if } s_i^\ast \leq s_j^\ast.
\]
Conversely, if \(s_i^\ast \geq s_j^\ast\), then from constraint \eqref{eq:ordering_distance_order}, for the unique \(p^\ast\) satisfying \(o^\ast_{i,p^\ast} = 1\), we have \(\sum_{q=1}^{p^\ast} o^\ast_{j,q} = 1\). Similarly, combined with constraints~\eqref{eq:ordering_assign} and \eqref{eq:ordering_assigN_1}, it follows that
\[
\sum_{p=1}^N o^\ast_{i,p} \sum_{q=1}^p o^\ast_{j,q} = 1 \quad \text{if } s_i^\ast \geq s_j^\ast.
\]
Therefore, we have
\[
d_{ij} = \sum_{p=1}^N o^\ast_{i,p} \sum_{q=1}^p o^\ast_{j,q}
=
\begin{cases}
0, & \text{if } s_i^\ast \leq s_j^\ast, \\
1, & \text{if } s_i^\ast \geq s_j^\ast.
\end{cases}
\]
Since $s_i = s_i^\ast$, it ensures that \(d_{ij}\) satisfies constraints \eqref{eq:random_select_knn_strict1}.

From the definition of the ordering variables $\boldsymbol{o}$, if $s_i $ is among the $k$ nearest neighbors, i.e., $s_i $ belongs to the $k$ smallest distances, then there exists $p^\ast \leq k$ such that $o^\ast_{i,p^\ast} = 1$, and thus 
\(
t_i^\ast = \sum_{p=1}^k o^\ast_{i,p} = 1.
\)
Together with constraints \eqref{eq:ordering_assign} and \eqref{eq:ordering_assigN_1}, we have
\(
\sum_{j=1}^N \sum_{q=1}^{p^\ast} o^\ast_{j,q} \leq k,
\)
which implies
\(
\sum_{j=1}^N d_{ij} = \sum_{j=1}^N o^\ast_{i,p^\ast}\sum_{q=1}^{p^\ast} o^\ast_{j,q} \leq k.
\)

Conversely, if $s_i $ is not among the $k$ nearest neighbors, then there exists $p^\prime > k$ such that $o^\ast_{i,p^\prime} = 1$, and thus
\(
t_i^\ast = \sum_{p=1}^k o^\ast_{i,p} = 0.
\)
As a result, 
\(
\sum_{j=1}^N \sum_{q=1}^{p^\prime} o^\ast_{j,q} > k,
\)
\(
\sum_{j=1}^N d_{ij} = \sum_{j=1}^N o^\ast_{i,p^\prime}\sum_{q=1}^{p^\ast} o^\ast_{j,q} > k.
\)

Since $t_i=t^\ast_i$, $d_{ij}$ satisfies constraints \eqref{eq:random_select_knn_strict3}. From constraints \eqref{eq:ordering_knn_selection},  \eqref{eq:ordering_assign} and \eqref{eq:ordering_assigN_1}, we have $\sum_{i=1}^N t_i = \sum_{i=1}^N t_i^\ast = \sum_{i=1}^N \sum_{j=1}^k o^\ast_{ij}=k$, which implies that $t_i$ satisfies constraint \eqref{eq:random_select_knn_sum_t}.

\item Conversely, suppose \( (t^\ast, s^\ast, d^\ast)\) is an optimal solution to Model \eqref{eq:distance_compare}. We can construct a feasible solution to Model \eqref{eq:ordering} that attains the same objective value. Let $t_i=t^\ast_i, \bar{s} = s_i^\ast$.

For each data point \(i\), consider the sum
\(\sum_{j=1}^N d^\ast_{ij}\), and sort these sums in non-decreasing order. We define the ranking index \(q^\ast_i\) for each data point \(i\) as the position of \(\sum_{j=1}^N d^\ast_{ij}\) in the ordered sequence. Then, we let
\(
\bar{o}_{i,p} = \mathbb{I}\{q^\ast_i = p\}.
\)
With this ranking index, each data point is assigned to exactly one position among \(N\), and \(\bar{o}_{i,p}\) satisfies constraints \eqref{eq:ordering_assign} and \eqref{eq:ordering_assigN_1}.

From constraints \eqref{eq:random_select_knn_strict1}, we have
\(
s_i  \leq s_j  \Longleftrightarrow \sum_{j=1}^N d^\ast_{ij} \leq \sum_{i=1}^N d^\ast_{ji}.
\)
Thus, we have
\(
\sum_{i=1}^N s_i  \bar{o}_{i,p}
= \sum_{i=1}^N s_i  \cdot \mathbb{I}\{q^\ast_i = p\}
= \{s_i  \mid q^\ast_i = p\}
\leq \{s_i  \mid q^\ast_i = p+1\}
= \sum_{i=1}^N s_i  \bar{o}_{i,p+1}.
\)
This implies that \(\bar{o}_{i,p}\) satisfies constraints \eqref{eq:ordering_distance_order}.

From constraints \eqref{eq:random_select_knn_strict3}, and \eqref{eq:random_select_knn_sum_t}, we have $t_i^\ast = \mathbb{I}\{q_i^\ast \leq k \}$. From constraints~\eqref{eq:ordering_assign}, we have
\(
t_i = \sum_{p=1}^k o_{i, p} = \sum_{p=1}^k \mathbb{I}\{q_i^\ast=p\} = \mathbb{I}\{q_i^\ast \leq k \} = t_i^\ast,
\)
which implies $t_i$ satisfies constraint ~\eqref{eq:ordering_knn_selection}.
\end{enumerate}
Since both formulations \eqref{eq:distance_compare_tie} and \eqref{eq:ordering} have the same objective function, this concludes the proof. 
\end{proof}
Although these two formulations are equivalent, during our initial testing, we found that Model \eqref{eq:ordering} resulted in much longer solution times. Therefore, we omit Model \eqref{eq:ordering} in the comparison.

\subsection{ER-DD-SAA with CART}\label{sec:er_dd_saa_cart_omitted}
\textbf{Special Case: Single-Dimensional Decision.} When the decision variable is one-dimensional, i.e., \(d_z=1\), we propose an alternative formulation tailored to this special case. In this case, each region is completely determined by its upper-bound threshold. Let
\(\{b_z^{(1)},\dots,b_z^{(N^{\boldsymbol{w}})}\}\)
denote these upper-bound values sorted in nondecreasing order:
$b_z^{(1)} \le b_z^{(2)} \le \cdots \le b_z^{(N^{\boldsymbol{w}})}$. Given a target point \(\boldsymbol{z}\), the goal is to identify the smallest index \(r\in [N^{\boldsymbol{w}}]\) such that \(z\le b_z^{(r)}\), which corresponds to the first region in the ordered sequence that contains \(\boldsymbol{z}\). 

To model such logic, we introduce binary variables \(\bar u_r\in\{0,1\}\), such that \(\bar u_r=1\) if and only if \(z\le b_z^{(r)}\). We continue to use binary variables \(R_r\in\{0,1\}\) to indicate whether region $r$ is selected, where \(R_r=1\) if and only if \(r\) is the smallest index such that \(\bar u_r=1\). The resulting formulation is
\allowdisplaybreaks
 \begin{subequations}
\begin{align}\min_{\boldsymbol{l},\boldsymbol{u},\boldsymbol{R}, \boldsymbol{z}} \quad & \frac{1}{N}\sum_{i=1}^N
    c\!\left(
        \boldsymbol z,
        \operatorname{Proj}_{\mathcal Y}\!\left(
            \sum_{r=1}^{N^{\boldsymbol{w}}} R_r \hat{y}^r + \hat{\epsilon}^i
        \right)
    \right) \\
    \text{s.t.} \quad 
    & \delta - M \bar{u}_r \le z - b_z^{(r)} \leq M (1 - \bar{u}_r),\quad \forall r \in \mathcal{N}^{\boldsymbol w},  \label{eq:upper_bound_ordered_rule} \\
    & R_r \leq \bar{u}_r, \quad  \forall r\in \mathcal{N}^w, \label{eq:satisfy_check_ordered_rule} \\
    & R_r \leq \bar{u}_r - \bar{u}_{r-1}, \quad  \forall r\in \mathcal{N}^w \setminus \{1\} \label{eq:smallest_r_ordered_rule} \\
    & \bar{u}_r, R_r \in \{0, 1\}, \quad \forall r\in \mathcal{N}^w,\label{eq:ordered_binary} \\
    & \eqref{eq:cart_decision_feasiblity}--\eqref{eq:unique-box}.
\end{align}
\label{eq:special_cart}
\end{subequations}
Here, constraints~\eqref{eq:upper_bound_ordered_rule} enforce that \(\bar u_r=1\) if and only if \(z\le b_z^{(r)}\). Constraints~\eqref{eq:satisfy_check_ordered_rule} impose that region \(r\) can be selected only if $\bar{u}_r=1$, i.e., \(z \le b_z^{(r)}\), where we use a sufficiently large constant $M_r= b_z^{(r)}- b_z^{(r-1)}$. Constraints~\eqref{eq:smallest_r_ordered_rule} ensure that region \(r\) can be selected only if \(r\) is the first index for which \(\bar u_r=1\), i.e., only if \(z\le b_{(r)}\) and \(z>b_{(r-1)}\).

Model~\eqref{eq:er_dd_saa_cart} and Model~\eqref{eq:special_cart} encode the same decision logic in the one-dimensional setting. While Model~\eqref{eq:special_cart} saves $N^{\boldsymbol{w}}$ binary variables by omitting explicit boundary definitions, it requires additional $N^{\boldsymbol{w}}$ constraints to identify the active leaf node. The following theorem establishes the equivalence between the two formulations.

\begin{theorem}\label{thm:cart_equivalence}
    Model \eqref{eq:er_dd_saa_cart} and Model \eqref{eq:special_cart} are equivalent when \(d_z=1\). 
\end{theorem}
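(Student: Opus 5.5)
We prove equivalence in the same way as Theorem~\ref{thm:knn_equivalence}: from any feasible solution of one model we build a feasible solution of the other with the same $(z,\boldsymbol R)$. The objectives of \eqref{eq:er_dd_saa_cart} and \eqref{eq:special_cart} depend only on $z$ and $\sum_r R_r\hat y^r$, so matching these quantities gives equal objective values. Hence the two feasible sets project onto the same set of $(z,\boldsymbol R)$, and the optimal values and optimal $(z,\boldsymbol R)$ coincide.

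The key structural fact concerns $d_z=1$ with $\boldsymbol w$ fixed. The intervals $(a_z^r,b_z^r]$, $r\in\mathcal N^{\boldsymbol w}$, partition the real line. Sorting them by upper bound gives $a_z^{(r)}=b_z^{(r-1)}$, with $a_z^{(1)}=-\infty$ and the largest $b_z^{(N^{\boldsymbol w})}=+\infty$ (or at least the upper bound of $\mathcal Z$). The reason is that the CART leaves restricted to the slice $\{\boldsymbol w\}$ are disjoint half-open intervals covering the line. I would state this as a preliminary lemma, because it is what makes a region "determined by its upper threshold."

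\emph{From \eqref{eq:er_dd_saa_cart} to \eqref{eq:special_cart}.} Take feasible $(z,\boldsymbol l,\boldsymbol u,\boldsymbol R)$. By the big-$M$/$\delta$ logic of \eqref{eq:ind_region_lowerbound1}--\eqref{eq:unique-box}, exactly one $R_{r^\ast}=1$, and $z\in(a_z^{(r^\ast)},b_z^{(r^\ast)}]$. Set $\bar u_r=\mathbb I\{z\le b_z^{(r)}\}$; constraint \eqref{eq:upper_bound_ordered_rule} holds by the choice of $M,\delta$. Monotonicity of the sorted thresholds gives $\bar u_r\ge\bar u_{r-1}$. The lemma gives $z>b_z^{(r^\ast-1)}$, so $\bar u_{r^\ast}-\bar u_{r^\ast-1}=1$, and \eqref{eq:satisfy_check_ordered_rule}--\eqref{eq:smallest_r_ordered_rule} hold. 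For $r\ne r^\ast$ these constraints are trivial, since $R_r=0\le\bar u_r-\bar u_{r-1}$ by monotonicity.

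\emph{From \eqref{eq:special_cart} to \eqref{eq:er_dd_saa_cart}.} Take feasible $(z,\bar{\boldsymbol u},\boldsymbol R)$ with $\sum_rR_r=1$ and $R_{r^\ast}=1$. Then $\bar u_{r^\ast}=1$, and if $r^\ast>1$ also $\bar u_{r^\ast-1}=0$. This means $b_z^{(r^\ast-1)}<z\le b_z^{(r^\ast)}$, so by the lemma $z\in\mathcal B_{r^\ast}$. Now set $l_r=\mathbb I\{z>a_z^r\}$ and $u_r=\mathbb I\{z\le b_z^r\}$. Constraints \eqref{eq:ind_region_lowerbound1}--\eqref{eq:ind_region_upperbound1} then hold, and $l_{r^\ast}+u_{r^\ast}=2\ge2R_{r^\ast}$. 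So the same $(z,\boldsymbol R)$ is feasible.

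The main obstacle is the careful handling of the big-$M$/$\delta$ encodings of strict versus non-strict inequalities. We must check that \eqref{eq:upper_bound_ordered_rule} forces $\bar u_r=1\iff z\le b_z^{(r)}$, which requires $\delta$ small enough relative to the gaps between distinct thresholds and the grid of $\mathcal Z$. The same check is needed at the boundary cases $r=1$ and infinite bounds, where the constraint for $r-1$ is absent. A secondary point is ties among the sorted $b_z^{(r)}$, which cannot occur for disjoint leaves, so the ordering is strict.
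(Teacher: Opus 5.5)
Your proposal is correct and is essentially the paper's proof: both keep $(z,\boldsymbol R)$ fixed and pass between the binaries via $\bar u_r=u_r$ and $l_r=1-\bar u_{r-1}$ (your indicator definitions agree with these at every feasible point), and your partition lemma makes explicit the identification $a_z^r=b_z^{(r-1)}$ that the paper simply imposes at the start. The $\delta$-issue you flag needs no smallness or grid assumption, which would not even be available for continuous $\mathcal Z$: with the same $M,\delta$, constraint \eqref{eq:upper_bound_ordered_rule} is literally \eqref{eq:ind_region_upperbound1} with $u_r=\bar u_r$, and \eqref{eq:ind_region_lowerbound1} with $a_z^r=b_z^{(r-1)}$ and $l_r=1-\bar u_{r-1}$ is that same constraint for index $r-1$, so feasibility in either model already forces $z\notin(b_z^{(r)},b_z^{(r)}+\delta)$ for every $r$, which is exactly what your big-$M$ checks need.
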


\begin{proof}\label{sec:proof_single_equivalence}
   Let $a_z^r = b_z^{(r-1)}$ for $r = 2, \ldots, N^{\boldsymbol{w}}$. Since the
leftmost region has lower bound $-\infty$ from training, we set
$a_z^1 < \inf_{z \in \mathcal{Z}} z$ so that $l_1 = 1$ for every feasible $z$; this is without loss of generality and does not alter the CART partition.  Define $b_z^r = b_z^{(r)}$ for $r \in \mathcal{N}^{\boldsymbol{w}}$. 

    \begin{enumerate}
        \item Suppose \( (\boldsymbol{l}^\ast, \boldsymbol{u}^\ast, \boldsymbol{R}^\ast) \) is an optimal solution to Model \eqref{eq:er_dd_saa_cart}. We can then construct a feasible solution \( (\bar{\boldsymbol{u}}^\prime, \boldsymbol{R}^\prime) \) to Model \eqref{eq:special_cart} by setting \(
\bar{u}^\prime_{r-1} = 1 - l_r^\ast, \ \bar{u}^\prime_{r} = u_r^\ast, \  R^\prime_r = R_r^\ast.
\)
By construction, \( \bar{\boldsymbol{u}}^\prime \) and \( \boldsymbol{R}^\prime \) satisfy constraints \eqref{eq:unique-box} and \eqref{eq:cart_binary_decision}.  From constraints \eqref{eq:ind_region_upperbound1}, it follows that \( \bar{u}^\prime_r \) satisfies constraints \eqref{eq:upper_bound_ordered_rule}. Moreover, from constraint \eqref{eq:box-activation}, if \( R_r^\ast = 1 \), then \( l_r^\ast = 1 \) and \( u_r^\ast = 1 \). Therefore, \( \bar{u}^\prime_{r-1} = 0 \), \( \bar{u}^\prime_r = 1 \), and thus \( \bar{u}^\prime_r \geq R_r^\ast = R^\prime_r \),  \( \bar{u}^\prime_r - \bar{u}^\prime_{r-1} = 1 \geq R_r^\ast = R^\prime_r \). If \( R_r^\ast = 0 \), then \( ( l_r^\ast, u_r^\ast ) = (0, 1) \) or \( (1, 0) \). Therefore, \( (\bar{u}^\prime_{r-1}, \bar{u}^\prime_r) = (1, 1) \) or \( (0, 0) \), which implies \( \bar{u}^\prime_r \geq R_r^\ast = R^\prime_r \), \( \bar{u}^\prime_r - \bar{u}^\prime_{r-1} = 0 \geq R_r^\ast = R^\prime_r \). In both cases, \( \bar{\boldsymbol{u}}^\prime \) and \( \boldsymbol{R}^\prime \) satisfy constraints~\eqref{eq:satisfy_check_ordered_rule} and \eqref{eq:smallest_r_ordered_rule}.

\item Conversely, suppose \((\boldsymbol{\bar{u}}^\ast, \boldsymbol{R}^\ast)\) is an optimal solution to Model \eqref{eq:special_cart}, we can construct a feasible solution \( (\boldsymbol{l}^\prime, \boldsymbol{u}^\prime, \boldsymbol{R}^\prime)\) to Model \eqref{eq:er_dd_saa_cart} by setting $\boldsymbol{R}^\prime = \boldsymbol{R}^\ast, \boldsymbol{u}^\prime=\boldsymbol{\bar{u}}^\ast$ , $l^\prime_r=1 - \bar{u}^\ast_{r-1}, \forall r=2,\ldots, N^{\boldsymbol{w}}$ and $l^\prime_1=1$. By construction, \( (\boldsymbol{l}^\prime, \boldsymbol{u}^\prime, \boldsymbol{R}^\prime)\) satisfies constraints~\eqref{eq:cart_binary_decision} and \eqref{eq:unique-box}. From constraints~\eqref{eq:upper_bound_ordered_rule}, \(\boldsymbol{l}^\prime, \boldsymbol{u}^\prime\) satify constraints~\eqref{eq:ind_region_lowerbound1}--\eqref{eq:ind_region_upperbound1}. From constraint~\eqref{eq:cart_binary_decision}, $R_r=1$ if and only if $\bar{u}^\ast_r=1,$ and $\bar{u}^\ast_{r-1}=0$, then by construction $l_r^\prime = 1- \bar{u}^\ast_r=1$ and $u^\prime_r=\bar{u}^\ast_r=1$, which implies \(\boldsymbol{l}^\prime, \boldsymbol{u}^\prime\) satisfy constraint~\eqref{eq:box-activation}.

    \end{enumerate}    
    This completes the proof.
\end{proof}

\section{Omitted Proofs}\label{subsubsec:ommitted_proof}

\noindent\textbf{THEOREM~\ref{thm:alg_convergence}.}
\textit{
Suppose Assumptions~\ref{ass:complete_recourse} and~\ref{ass:expensive_recourse} hold,
\(\mathcal Z\) is nonempty and compact, and the
RMP~\eqref{eq:master_problem} is solved to
global optimality at each iteration. The
proposed BD-CG algorithm converges to a globally optimal solution in finitely
many iterations.
}
 
\begin{proof}\label{proof:algorithm_convergence}
We first show that, under the exact termination criterion, the algorithm
returns a globally optimal solution upon termination. We then show that
termination must occur within finitely many iterations.
\begin{enumerate}
    \item \textbf{Global optimality upon termination.}
    \begin{enumerate}
        \item We first introduce an
        equivalent full formulation for ER-DD-SAA problem~\eqref{eq:two_stage_problem_saa} as follows:
        \begin{subequations}\label{eq:full_problem} 
\begin{align} 
\min_{\boldsymbol z, \boldsymbol{d}, \boldsymbol{t}, \theta} \quad & \boldsymbol{f}^{\top}\boldsymbol{z} + \theta \\ 
\text{s.t.} \quad & \eqref{eq:decision_feasibility}--\eqref{eq:random_select_knn_sum_t}, \eqref{eq:knn_binary}, \\ 
& d_{ij} + d_{ji}=1, \quad \forall i, j \in [N], \, i <j, \\ 
& M_1^{ij}(d_{ij}-1) \le s_i - s_j \le M_1^{ij} d_{ij}, \quad \forall i, j \in [N], \, i <j, \\ 
& k-M_2 t_i \le \sum_{j \in [N]\setminus\{i\}} d_{ij} + 1  \le k + M_2(1-t_i), \quad \forall i \in [N], \label{eq:alg_constraint3}\\ 
& d_{ij} \in \{0,1\}, \quad \forall i, j \in [N], \\
& \theta \ge \frac{1}{N}\sum_{i=1}^{N}
            h\!\left(\boldsymbol z,\ \frac{1}{k}\sum_{j=1}^{N}t_j\boldsymbol y^{j}
            +\hat{\boldsymbol\epsilon}_i\right). \label{eq:full_second_approximation}
\end{align} 
\end{subequations}
 
        \item At iteration \(m\), the RMP~\eqref{eq:master_problem} is a
        relaxation of the full problem~\eqref{eq:full_problem}. This follows
        from two sources. First, the RMP includes only a subset of the
        constraints required to exactly characterize the \(k\)NN selection.
        Second, the RMP contains only the Bender's optimality cuts generated
        up to iteration \(m\), rather than all cuts associated with the
        extreme points of the dual recourse polyhedron. For any generated
        dual extreme point \(\boldsymbol\pi^{\ell}\), the corresponding
        Bender's cut \eqref{eq:dual_cut} gives a lower approximation of the recourse function
        by weak duality: \(
            h\!\left(\boldsymbol z,\ \frac{1}{k}\sum_{j=1}^{N}
            t_j\boldsymbol y^{j}+\hat{\boldsymbol\epsilon}_i\right)
            \ \ge \
            \left(\boldsymbol\pi_i\right)^{\top}
            \!\left[H (\boldsymbol z)+\boldsymbol T\!\left(
            \frac{1}{k}\sum_{j=1}^{N}t_j\boldsymbol y^{j}
            +\hat{\boldsymbol\epsilon}_i\right)\right] \). Hence, \eqref{eq:lower_bound_theta}--\eqref{eq:Bender's_cuts} provide a relaxation of \eqref{eq:full_second_approximation}. Together
        with the fact that the RMP~\eqref{eq:master_problem} enforces only a subset of the \(k\)NN
    selection constraints, the RMP~\eqref{eq:master_problem} is a relaxation of the full problem~\eqref{eq:full_problem}.
        Consequently, for this minimization problem, its optimal value
        provides a valid lower bound (LB) on the optimal value of
        problem~\eqref{eq:full_problem}.
 
        \item Upon termination at iteration \(m\), the exact termination
        criterion gives \(F^m \subseteq S^m\). Hence the RMP-selected \(k\)NN coincides with the true \(k\)NN set, which means that the
        current solution \((\boldsymbol z^m,\boldsymbol t^m)\) is feasible to
        the problem~\eqref{eq:full_problem}. Suppose
        \((\boldsymbol z^{\ast},\boldsymbol t^{\ast},\theta^{\ast})\) is an
        optimal solution of the problem~\eqref{eq:full_problem}.
        With weak and strong duality of LP, we have
        \begin{align*}
            \theta^{\ast}
             &\ge
            \frac{1}{N}\sum_{i=1}^{N}
            h\!\left(\boldsymbol z^{\ast},\ \frac{1}{k}\sum_{j=1}^{N}
            t^{\ast}_j\boldsymbol y^{j}+\hat{\boldsymbol\epsilon}_i\right)
            \ =\
            \frac{1}{N}\sum_{i=1}^{N} \max_{\boldsymbol\pi_i}\ 
            \left(\boldsymbol\pi_i\right)^{\top}
            \!\left[H (\boldsymbol z^{\ast})+\boldsymbol T\!\left(
            \frac{1}{k}\sum_{j=1}^{N}t^{\ast}_j\boldsymbol y^{j}
            +\hat{\boldsymbol\epsilon}_i\right)\right]
            \\
            &\ge
            \Theta^{\ell}(\boldsymbol z^{\ast},\boldsymbol t^{\ast}), \, \forall \ell \in [m].
        \end{align*}
        Therefore,
        \((\boldsymbol z^{\ast},\boldsymbol t^{\ast},\theta^{\ast})\) is
        feasible for the RMP at iteration \(m\), which implies
        \(
            \boldsymbol f^{\top}\boldsymbol z^{m}+\theta^{m}
            \ \le\
            \boldsymbol f^{\top}\boldsymbol z^{\ast}+\theta^{\ast}.
        \)
        Moreover, when the algorithm terminates, the current solution satisfies the newly generated cut,
        and by strong duality of the subproblem~\eqref{eq:sub_problem},
        \[
            \theta^{m}
            \ \ge\
            \Theta^{m}(\boldsymbol z^{m},\boldsymbol t^{m})
            \ =\
            \frac{1}{N}\sum_{i=1}^{N}
            h\!\left(\boldsymbol z^{m},\ \frac{1}{k}\sum_{j=1}^{N}
            t^{m}_j\boldsymbol y^{j}+\hat{\boldsymbol\epsilon}_i\right).
        \]
        Since \((\boldsymbol z^{m},\boldsymbol t^{m})\) is feasible for the problem~\eqref{eq:full_problem}, its objective value provides a valid upper bound (UB),
        i.e.,
        \(\boldsymbol f^{\top}\boldsymbol z^{m}
        +\frac{1}{N}\sum_{i=1}^{N}h(\boldsymbol z^{m},
        \frac{1}{k}\sum_{j=1}^{N}t^{m}_j\boldsymbol y^{j}
        +\hat{\boldsymbol\epsilon}_i)
        \ge \boldsymbol f^{\top}\boldsymbol z^{\ast}+\theta^{\ast}\).
        Combining the above inequalities yields
        \[
            \boldsymbol f^{\top}\boldsymbol z^{\ast}+\theta^{\ast}
            \ \le\
            \boldsymbol f^{\top}\boldsymbol z^{m}
            +\frac{1}{N}\sum_{i=1}^{N}
            h\!\left(\boldsymbol z^{m},\ \frac{1}{k}\sum_{j=1}^{N}
            t^{m}_j\boldsymbol y^{j}+\hat{\boldsymbol\epsilon}_i\right)
            \ \le\
            \boldsymbol f^{\top}\boldsymbol z^{m}+\theta^{m}
            \ \le\
            \boldsymbol f^{\top}\boldsymbol z^{\ast}+\theta^{\ast},
        \]
        which shows that $(\boldsymbol{z}^m, \boldsymbol{t}^m)$ is optimal to \eqref{eq:full_problem} at termination.
    \end{enumerate}
 
    \item \textbf{Finite convergence.}
    \begin{enumerate}
        \item The full formulation of the \(k\)NN selection contains only
        finitely many constraints. In particular, the total number of such
        constraints added in the algorithm under the pairwise distance
        comparison is at most \(\frac{N^2+N}{2}\) for
        \eqref{eq:random_select_knn_strict1}--\eqref{eq:random_select_knn_strict3}.
 
        \item Since the second-stage LP has relatively complete and
        sufficiently expensive recourse, the number of extreme points of its
        dual polyhedron is finite, meaning that only finitely many Bender's
        optimality cuts can be generated.
 
        \item Consider the incumbent solution
        \((\boldsymbol z^m,\boldsymbol t^m,\theta^m)\) obtained by solving
        the RMP~\eqref{eq:master_problem} at iteration \(m\). If the
        algorithm has not terminated, the current solution must be strictly
        cut off by at least one of the following two mechanisms:
        \begin{itemize}
            \item \textbf{Constraint Generation.}
            For the pairwise distance comparison formulation, an invalid
            \(k\)NN selection implies that there exist indices
            \(i \in S^m \setminus F^m\) and \(j \in F^m \setminus S^m\) for
            which the incumbent solution violates the corresponding ordering
            constraint
            \(
                M_1^{ij}(d_{ij}-1) \le s_i - s_j \le M_1^{ij} d_{ij}.
            \)
            Adding this violated constraint to the
            RMP~\eqref{eq:master_problem} therefore strictly cuts off the
            current infeasible incumbent solution. Since the active set can be enlarged
            only up to the finite set of constraints in the full formulation,
            this cutoff mechanism can occur only finitely many times.
 
            \item \textbf{Bender's Decomposition.}
            Suppose that the current incumbent solution
            \((\boldsymbol z^m,\boldsymbol t^m,\theta^m)\) does not satisfy
            the newly generated cut, i.e.,
            \(
                \theta^m < \Theta^m(\boldsymbol z^m,\boldsymbol t^m).
            \)
            Then, adding the following Bender's optimality
            cut~\eqref{eq:dual_cut} will cut off this solution:
            \(
                \theta^m \ \ge\ \Theta^m(\boldsymbol z^m,\boldsymbol t^m).
            \)
            Since the number of dual extreme points is finite under
            relatively complete and sufficiently expensive recourse, only
            finitely many distinct Bender's optimality cuts can be generated.
        \end{itemize}
        Because both the pool of candidate \(k\)NN constraints and the number
        of extreme points of the second-stage dual polyhedron are finite, the
        algorithm must terminate in finitely many iterations.
    \end{enumerate}
\end{enumerate}
This completes the proof. 
\end{proof}

Next, we verify that the three nonparametric regression models considered in this paper (i.e., $k$NN, CART, ReLU NNs) satisfy the uniform consistency in Assumption \ref{assumption:estimator}.
\begin{theorem}[Consistency of \(k\)NN] \label{lem:knn_convergence} (adapted from \cite[Theorem 12.1]{biau2015lectures})
Under the following conditions: (i)The space $\mathcal{Z} \times \mathcal{W}$ is compact; (ii) $Q^\ast(\boldsymbol{z}, \boldsymbol{w})$ is continuous on $\mathcal{Z} \times \mathcal{W}$; (iii) There exists $\lambda > 0$ such that
    \(
    \sup_{(\boldsymbol{z}, \boldsymbol{w}) \in \mathcal{Z} \times \mathcal{W}} 
    \mathbb{E}\!\left[ e^{\lambda |Y-Q^\ast(\boldsymbol{z}, \boldsymbol{w})|} \,\middle|\, \boldsymbol{Z} = \boldsymbol{z}, \boldsymbol{W} = \boldsymbol{w} \right] < \infty;
    \) (iv) The number of neighbors \(k_N\) satisfies $k_N \to \infty$, $k_N/N \to 0$, and $k_N / \log N \to \infty$.
Then, we have $\sup_{(\boldsymbol{z},\boldsymbol{w})\in \mathcal{Z}\times\mathcal{W}}
\left| \hat{Q}_N^{k\text{-}\mathrm{NN}}(\boldsymbol{z},\boldsymbol{w})
- Q^*(\boldsymbol{z},\boldsymbol{w}) \right|
\xrightarrow{\text{a.s.}} 0$.

\end{theorem}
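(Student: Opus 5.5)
Since the statement is adapted from \cite[Theorem 12.1]{biau2015lectures}, the plan is to reduce it to that result by matching hypotheses, then spell out the standard bias–variance decomposition that underlies it. Write $\boldsymbol x=(\boldsymbol z,\boldsymbol w)$ and $\boldsymbol x^i=(\boldsymbol z^i,\boldsymbol w^i)$, with i.i.d.\ design points. Decompose
\[
\hat Q_N^{k\text{-}\mathrm{NN}}(\boldsymbol x)-Q^\ast(\boldsymbol x)=\frac{1}{k_N}\sum_{i\in\mathcal N_{k_N}(\boldsymbol x)}\bigl(Q^\ast(\boldsymbol x^i)-Q^\ast(\boldsymbol x)\bigr)+\frac{1}{k_N}\sum_{i\in\mathcal N_{k_N}(\boldsymbol x)}\boldsymbol\epsilon^i ,
\]
and show that each term tends to zero uniformly over $\mathcal Z\times\mathcal W$ almost surely.

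\textbf{Bias term.} By (i)--(ii), $Q^\ast$ is uniformly continuous on the compact set. It then suffices to show that the $k_N$-th nearest-neighbor radius $R_{k_N}(\boldsymbol x)$ satisfies $\sup_{\boldsymbol x}R_{k_N}(\boldsymbol x)\to0$ a.s. Cover the support by finitely many balls of radius $\delta/2$, each with positive probability mass $p_\delta$. By a Chernoff bound, each ball contains at least $k_N$ sample points with probability at least $1-e^{-cN}$ once $k_N/N\to0$, and Borel--Cantelli finishes the argument. This step needs the design distribution to have support covering $\mathcal Z\times\mathcal W$; I would state this as the implicit setting of \cite{biau2015lectures}.

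\textbf{Noise term (main obstacle).} The noise term is where uniformity is hard, because the neighbor set depends on $\boldsymbol x$. I would handle it in three steps.
\begin{itemize}
\item[1.] As $\boldsymbol x$ ranges over $\mathbb R^{d}$ with the $L_1$ metric, the family of neighbor sets $\mathcal N_{k}(\boldsymbol x)$ has only polynomially many members in $N$, say $O(N^{c d})$. This follows from a cell-counting or VC argument on the arrangement of bisector regions.
\item[2.] Conditional on the design, the $\boldsymbol\epsilon^i$ are independent with uniformly bounded exponential moments by (iii). A Chernoff/Bernstein bound therefore gives, for each fixed index set of size $k_N$,
\[
\mathbb P\Bigl(\Bigl|\tfrac1{k_N}\sum \boldsymbol\epsilon^i\Bigr|>\eta\Bigr)\le 2e^{-c(\eta)k_N}.
\]
\item[3.] A union bound over the polynomially many sets gives a total probability of at most $N^{cd}e^{-c(\eta)k_N}$. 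This is summable precisely because $k_N/\log N\to\infty$, and Borel--Cantelli then gives almost-sure uniform convergence.
\end{itemize}
Ties must be broken consistently, for example by index, so that the count in step 1 remains valid.

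\textbf{Conclusion.} Combining the two terms yields $\sup|\hat Q_N^{k\text{-}\mathrm{NN}}-Q^\ast|\to0$ a.s. This implies Assumption~\ref{assumption:estimator}(i), and Assumption~\ref{assumption:estimator}(ii) follows as noted after that assumption.
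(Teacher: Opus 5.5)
Your proposal is correct and is essentially the argument behind Biau and Devroye's Theorem 12.1, which the paper simply invokes without a proof of its own. That argument has two parts: uniform shrinkage of the $k_N$-NN radius over a compact support for the bias term (using $k_N/N\to 0$), and, for the noise term, a polynomial bound on the number of distance orderings combined with an exponential inequality and a union bound (using $k_N/\log N\to\infty$). The support caveat you raise is genuine, not cosmetic: the bias step only controls query points in the support of the design distribution, so the paper's supremum over $\mathcal Z\times\mathcal W$ implicitly requires that support to be all of $\mathcal Z\times\mathcal W$. Also, for the $L_1$ metric the bisectors are not hyperplanes, so you should do the ordering count after refining by the coordinate hyperplanes through the data points, where $L_1$ distances become affine; this still gives a polynomial bound such as $O(N^{3d})$.
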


\begin{theorem}[Consistency of CART]\label{lem:cart-uniform} (adapted from {\cite[Lemma 7]{bertsimas2019predictions}})
Let \((\boldsymbol z,\boldsymbol w)\) take values in
\(\mathcal Z\times\mathcal W = [0,1]^{d_z + d_w}\), suppose that: (i) \(\hat Q_N(\boldsymbol z,\boldsymbol w)\) is a regular,
    random-split, honest tree trained on i.i.d. samples; (ii) \(Q^*(\boldsymbol z,\boldsymbol w)\) is Lipschitz continuous on
    \(\mathcal Z\times\mathcal W\); (iii) there exists \(\lambda>0\) such that
    \(
    \sup_{(\boldsymbol z,\boldsymbol w)\in\mathcal Z\times\mathcal W}
    \mathbb E\!\left[
        \exp\!\left(
            \lambda \left|Y-Q^*(\boldsymbol z,\boldsymbol w)\right|
        \right)
        \,\middle|\, \boldsymbol{Z}=\boldsymbol z, \boldsymbol{W} = \boldsymbol w)
    \right]
    <\infty;
    \) (iv) the tree is grown to full depth \(n_d \in\mathbb N\),
    where \(\log (N) /n_d\to 0\), \(N/n_d \to \infty,\) as \(N\to \infty.\) 
Then, we have \(
\sup_{(\boldsymbol z,\boldsymbol w)\in\mathcal Z\times\mathcal W}
\left|
\hat Q^{\text{CART}}_N(\boldsymbol z,\boldsymbol w)-Q^*(\boldsymbol z,\boldsymbol w)
\right|
\xrightarrow{P}0.
\)
\end{theorem}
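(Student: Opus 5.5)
The statement is adapted from \cite[Lemma 7]{bertsimas2019predictions}, so the plan is to reduce it to a uniform concentration-plus-bias argument over the leaves of an honest, regular, random-split tree. For any input $\boldsymbol x=(\boldsymbol z,\boldsymbol w)\in[0,1]^{d_z+d_w}$, let $\mathcal B(\boldsymbol x)$ be its leaf and $\bar Q^\ast(\mathcal B):=\mathbb E[Q^\ast(\boldsymbol X)\mid \boldsymbol X\in\mathcal B]$. Then
\begin{align*}
\bigl|\hat Q^{\mathrm{CART}}_N(\boldsymbol x)-Q^\ast(\boldsymbol x)\bigr|
\le \Bigl|\frac{1}{|\mathcal B(\boldsymbol x)|}\sum_{i:\boldsymbol x^i\in\mathcal B(\boldsymbol x)}\epsilon^i\Bigr|
+\sup_{\boldsymbol x'\in\mathcal B(\boldsymbol x)}\bigl|Q^\ast(\boldsymbol x')-Q^\ast(\boldsymbol x)\bigr|.
\end{align*}
Here the sum runs over the estimation half-sample, by honesty. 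The first term is the variance part and the second the bias part. The proof bounds each uniformly in $\boldsymbol x$; since the supremum over $\boldsymbol x$ is a maximum over the finitely many leaves, uniformity reduces to a union bound over leaves.

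\textbf{Bias term.} By Lipschitz continuity (ii), the bias term is at most $L_Q\,\mathrm{diam}(\mathcal B(\boldsymbol x))$. For this I would invoke the standard lemma for regular random-split trees (Wager--Athey / Meinshausen type). Random splitting means each coordinate is split with probability bounded below, and regularity means each split keeps at least a fraction $\alpha$ of points on each side. Together these force every leaf to be split $\Omega(\log(N/n_d))$ times along each coordinate, with high probability uniformly over leaves. Hence every side length is at most $(1-\alpha)^{c\log(N/n_d)}$. Since $N/n_d\to\infty$ by (iv), $\sup_{\boldsymbol x}\mathrm{diam}(\mathcal B(\boldsymbol x))\xrightarrow{P}0$.

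\textbf{Variance term.} Honesty implies that, conditional on the tree structure, the residuals $\epsilon^i$ of estimation samples in a leaf are independent and mean zero. Each leaf contains at least $n_d$ such samples (minimum leaf size), and there are at most $N/n_d$ leaves. The exponential moment condition (iii) gives a Bernstein/sub-exponential bound
\begin{align*}
\mathbb P\Bigl(\Bigl|\tfrac{1}{n}\sum\epsilon^i\Bigr|>\eta\,\Big|\,\text{tree}\Bigr)\le 2e^{-c\,n\min(\eta,\eta^2)}
\end{align*}
for a leaf with $n\ge n_d$ points. A union bound over at most $N/n_d\le N$ leaves gives a failure probability of at most $2N e^{-c n_d\min(\eta,\eta^2)}=2\exp(\log N-c n_d\min(\eta,\eta^2))$, which tends to $0$ because $\log N/n_d\to0$. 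Combining the two terms yields uniform convergence in probability.

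\textbf{Main obstacle.} The hardest step is the uniform diameter bound. The per-leaf split-count argument has to hold simultaneously for all leaves, and leaves are data-dependent. I would handle this by counting root-to-leaf paths, each of length at least $\log_{1/\alpha}(N/n_d)$, applying a Chernoff bound to the number of splits on each coordinate along a path, and union-bounding over at most $N/n_d$ paths and $d_z+d_w$ coordinates. The lemma of \cite{bertsimas2019predictions} packages exactly this, so in the write-up I would cite it and only check that our conditions (i)--(iv) match its hypotheses.
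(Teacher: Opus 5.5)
The paper gives no argument of its own for this statement; it simply defers to Lemma~7 of \cite{bertsimas2019predictions}. Your closing plan of citing that lemma is therefore the paper's route. Your error decomposition and your variance step are sound. Conditionally on a partition that ignores the estimation responses, each leaf mean averages at least $n_d$ independent sub-exponential residuals, and the factor $N/n_d$ from the union bound is absorbed because $n_d/\log N\to\infty$. The sketch breaks in the bias step, in two places, and the second one cannot be repaired under (i)--(iv).

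\textbf{Counts versus lengths.} The step ``hence every side length is at most $(1-\alpha)^{c\log(N/n_d)}$'' does not follow. Regularity controls the \emph{fraction of sample points} sent to each child, not the length of the cut. Turning counts into lengths needs two things:
\begin{itemize}
\item the law of $(\boldsymbol Z,\boldsymbol W)$ must have a density bounded away from $0$ and $\infty$ on the cube;
\item a relative concentration bound must hold uniformly over all axis-aligned boxes of mass $\gtrsim n_d/N$.
\end{itemize}
This is the role played by uniform covariates in Wager and Athey's Lemma~2. Nothing in (i)--(iv) supplies it, and without it the supremum over the whole cube fails. For instance, if the covariates only charge $\{x_1\le 1/2\}$ and $Q^\ast(\boldsymbol x)=x_1$, every leaf meeting $\{x_1=1\}$ predicts at most $1/2+o_P(1)$ there.

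\textbf{Uniform split counts.} More seriously, the claim that every leaf is split $\Omega(\log(N/n_d))$ times along each coordinate, uniformly over leaves, is false.
\begin{itemize}
\item A path of length $L$ has no split on coordinate $j$ with probability as large as $(1-\pi/d)^L$ (for example, when each coordinate is drawn with probability exactly $\pi/d$). No Chernoff bound can beat this.
\item For median splits, your union bound over leaves is therefore at least of order $(N/n_d)^{1+\log_2(1-\pi/d)}$. This does not vanish for any $d=d_z+d_w\ge 2$.
\end{itemize}
The failure is genuine for $d\ge 3$. Take the tree that splits at the node median along a coordinate drawn uniformly at random and stops below $2n_d$ points.
\begin{itemize}
\item This tree is regular, random-split with $\pi=1$, and honest, since it never looks at responses.
\item The nodes with no ancestor split on coordinate~$1$ form a Galton--Watson process with offspring $2$ w.p.\ $1-1/d$ and $0$ w.p.\ $1/d$. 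This process is supercritical.
\item Hence, with probability bounded away from zero, some leaf spans all of $[0,1]$ in $x_1$.
\item With uniform covariates and $Q^\ast(\boldsymbol x)=x_1$, that leaf's membership depends only on the other coordinates. Its honest mean is therefore $1/2+o_P(1)$, so $\sup|\hat Q_N-Q^\ast|\ge 1/2-o_P(1)$ on that event.
\item Meanwhile (ii)--(iv) hold, for example with $n_d=\lfloor\sqrt N\rfloor$ and Gaussian noise.
\end{itemize}
Wager--Athey-type diameter bounds are pointwise in the query point. That is why your path-counting argument works for a fixed $\boldsymbol x$ but not simultaneously for all leaves.

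So ``checking that (i)--(iv) match the hypotheses'' is exactly where the argument has to change. Verify whether the cited lemma carries a design-density assumption and whether its conclusion is really uniform. To obtain the sup-norm version that Assumption~\ref{assumption:estimator}(i) requires, you need extra hypotheses, for example:
\begin{itemize}
\item a design density bounded away from $0$ and $\infty$;
\item a splitting rule that cuts every coordinate a fixed fraction of the time along \emph{every} path, such as cycling through coordinates;
\item uniform-over-boxes concentration to drive the count-to-length step.
\end{itemize}
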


\begin{theorem}(Adapted from {\cite[Theorem 2]{imaizumi2023sup}})\label{thm:dnn-Linfty-mean}
Let \((\boldsymbol z,\boldsymbol w)\) take values in
\(\mathcal Z\times\mathcal W = [0,1]^{d_z + d_w}\) and \(\boldsymbol{Y} \in \mathbb{R}^{d_y}\). Let \(\hat Q_N^{\mathrm{NNs}}\) denote the adversarial estimator over deep NNs with
depth \(N_d\) and width \(N_w\). Suppose that: (i) the marginal
measure \(P_{\mathcal{Z} \times \mathcal{W}}\) has a density uniformly lower bounded by \(C_{P_{\mathcal{Z} \times \mathcal{W}}}>0\). (ii) \(Q^*\) is continuous, \(\mathbb{E}[\|\hat Q_N^{\mathrm{NNs}}\|_{L^\infty}^2] \le V^2\) for some \(V>0\), and \(\mathbb{E}[\|\hat Q_N^{\mathrm{NNs}} - Q^*\|_{L^\infty}^2] \le \zeta_N^2\), where \(\zeta_N \ge 0\) and \(\zeta_N \to 0\) as \(N \to \infty\).  Then there exists \((N_d,N_w)\) with
\(N_dN_w=o(N)\) such that
\(
\mathbb E\!\left[\|\hat Q_N^{\mathrm{NNs}} -Q^*\|_{L^\infty}^2\right]\to 0,
\, \text{as } N \to\infty.
\)
\end{theorem}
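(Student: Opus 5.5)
The plan is to read this as a specialization of the sup-norm risk bound in \cite[Theorem 2]{imaizumi2023sup} to our input space $\mathcal Z\times\mathcal W=[0,1]^{d_z+d_w}$. That theorem bounds the $L^\infty$ risk of the adversarial estimator by three pieces: an approximation error, a complexity (variance) term, and the residual adversarial term $\zeta_N^2$. The approximation error is $\inf_f\|f-Q^*\|_{L^\infty}^2$, taken over ReLU networks of depth $N_d$ and width $N_w$. The complexity term is of the order $N_dN_w\log(N_dN_w)\log N/N$. Its constants depend only on $C_{P_{\mathcal Z\times\mathcal W}}$ from (i), the bound $V$ from (ii), and the dimension.

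I will first check that our conditions match those of the cited theorem. The density lower bound (i) is what converts an $L^2(P_{\mathcal Z\times\mathcal W})$ control into $L^\infty$ control; this is why it appears. The uniform bound $V$ on $\mathbb E\|\hat Q_N^{\mathrm{NNs}}\|_{L^\infty}^2$ plays the role of the clipping or boundedness hypothesis on the function class. The assumption $\zeta_N\to0$ controls the adversarial optimization term. Multi-dimensional $\boldsymbol Y\in\mathbb R^{d_y}$ is handled coordinatewise: apply the scalar result to each output and sum the $d_y$ bounds.

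Second, I will make the approximation term vanish. Since $Q^*$ is continuous on the compact cube, it is uniformly continuous. ReLU universal approximation results (e.g.\ Yarotsky-type constructions, or a direct piecewise-linear interpolation on a grid) then give, for each $\delta>0$, a network of some finite depth $D(\delta)$ and width $W(\delta)$ within $\delta$ of $Q^*$ in sup norm.

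Third, and this is the main obstacle, I must choose $(N_d,N_w)$ as functions of $N$ so that the approximation error and the complexity term vanish simultaneously. This must hold even though no modulus of continuity, and hence no rate, is available. I would use a diagonal argument. Take $\delta_m\downarrow0$ and the associated sizes $(D_m,W_m)$. Let $m(N)$ be the largest $m$ such that $D_mW_m\log(D_mW_m)\log N\le \sqrt N$, and set $(N_d,N_w)=(D_{m(N)},W_{m(N)})$. Then $m(N)\to\infty$, so the approximation error is at most $\delta_{m(N)}^2\to0$. The complexity term is at most $N^{-1/2}\to0$. Moreover $N_dN_w\le\sqrt N=o(N)$, as required. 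The one subtlety is that the approximating network class must be nested or monotone in size, so that larger architectures still contain the approximant. I would ensure this by padding networks with identity layers, which ReLU realizes as $x=\max\{0,x\}-\max\{0,-x\}$.

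Combining the three pieces gives $\mathbb E\|\hat Q_N^{\mathrm{NNs}}-Q^*\|_{L^\infty}^2\to0$. By Markov's inequality this yields uniform convergence in probability. That is Assumption~\ref{assumption:estimator}(i) for ReLU NNs, which is how the result feeds into Theorem~\ref{lem:generic-er-dd-saa}.
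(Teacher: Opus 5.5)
The paper gives no argument here beyond the pointer to \cite[Theorem 2]{imaizumi2023sup}. Your plan is therefore an expanded version of the same route: invoke that risk decomposition, then choose $(N_d,N_w)$ by a diagonal argument. The diagonal step itself is sound. What you overlooked is that hypothesis (ii), as written, already asserts $\mathbb{E}[\|\hat Q_N^{\mathrm{NNs}}-Q^*\|_{L^\infty}^2]\le\zeta_N^2$ with $\zeta_N\to0$ for the estimator itself. The conclusion is then immediate, and your universal-approximation and architecture-selection steps do no work for the statement as literally posed.

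If you want the nontrivial content the citation is meant to supply, your first step (``check that our conditions match'') is where the argument is loose.
\begin{itemize}
\item In the cited result, $V$ and $\zeta_N$ are conditions on the preprocessed (corrected) pseudo-response that enters the adversarial loss, not on the network estimator. $V$ bounds that pseudo-response and $\zeta_N$ is its sup-norm error. $\zeta_N$ is not an optimization residual, and $V$ is not a clipping condition on the network class.
\item The density lower bound (i) does not by itself turn $L^2(P_{\mathcal Z\times\mathcal W})$ control into $L^\infty$ control. It is the supremum over $h$-neighbourhoods in the adversarial loss, combined with (i), that does so. The price is a factor of order $h^{-(d_z+d_w)}$ in front of all terms, so the radius $h$ must be held fixed (or shrink slowly) and belongs in your list of constants.
\item For fully connected ReLU classes, the complexity term is typically of order $(N_dN_w)^2\log(N_dN_w)\log N/N$, not linear in $N_dN_w$. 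Your choice still handles this: $N_dN_w\log(N_dN_w)\log N\le\sqrt N$ gives $(N_dN_w)^2\log(N_dN_w)\log N/N\le 1/(\log(N_dN_w)\log N)\to0$.
\end{itemize}
With the hypotheses matched this way and $(D_m,W_m)$ made nondecreasing by padding, your diagonal argument does yield an architecture with $N_dN_w=o(N)$ and vanishing sup-norm risk.
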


\begin{theorem}[Consistency of ReLU NNs]\label{thm:nn_uniform}
Under the assumptions of Theorem~\ref{thm:dnn-Linfty-mean}, we have
\[
\sup_{(\boldsymbol z,\boldsymbol w)\in\mathcal Z\times\mathcal W}
\left|\hat Q_N^{\mathrm{NNs}} (\boldsymbol z,\boldsymbol w)-Q^*(\boldsymbol z,\boldsymbol w)\right|
\xrightarrow{P} 0.
\]
\end{theorem}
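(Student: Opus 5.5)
The statement follows from the $L^\infty$ mean-square bound of Theorem~\ref{thm:dnn-Linfty-mean} together with Markov's inequality. Write
\[
\Delta_N := \sup_{(\boldsymbol z,\boldsymbol w)\in\mathcal Z\times\mathcal W}\left|\hat Q_N^{\mathrm{NNs}}(\boldsymbol z,\boldsymbol w)-Q^*(\boldsymbol z,\boldsymbol w)\right| ,
\]
and note that $\Delta_N=\|\hat Q_N^{\mathrm{NNs}}-Q^*\|_{L^\infty}$. When $\boldsymbol Y$ is vector valued, we measure the difference in a fixed norm on $\mathbb R^{d_y}$ and take the supremum of that norm.

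The first point to settle is that the $L^\infty$ norm in Theorem~\ref{thm:dnn-Linfty-mean} really controls the pointwise supremum over $\mathcal Z\times\mathcal W$. Strictly, an $L^\infty$ norm is an essential supremum with respect to the reference measure. The marginal density is bounded below by $C_{P_{\mathcal Z\times\mathcal W}}>0$ on $[0,1]^{d_z+d_w}$, so this measure is equivalent to Lebesgue measure there. Both $Q^*$ (by assumption) and any ReLU network (a continuous piecewise-affine map) are continuous. The difference $\hat Q_N^{\mathrm{NNs}}-Q^*$ is therefore continuous, and for a continuous function the essential supremum on a set where every nonempty open subset has positive measure equals the true supremum. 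Hence $\Delta_N$ is exactly the $L^\infty$ norm in Theorem~\ref{thm:dnn-Linfty-mean}. Measurability of $\Delta_N$ is also immediate: by continuity, the supremum can be taken over a countable dense subset of $[0,1]^{d_z+d_w}$.

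With this identification, we choose $(N_d,N_w)$ as in Theorem~\ref{thm:dnn-Linfty-mean}, which gives $\mathbb E[\Delta_N^2]\to0$. For any $\varepsilon>0$, Markov's (Chebyshev's) inequality yields
\[
\mathbb P(\Delta_N>\varepsilon)\le \frac{\mathbb E[\Delta_N^2]}{\varepsilon^2}\to 0,
\]
which is the claimed convergence in probability. In other words, $L^2$ convergence of the random variable $\Delta_N$ implies its convergence in probability to zero.

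The only delicate step is the identification of the essential supremum with the pointwise supremum. It relies on the positive lower bound on the density in assumption (i) and on the continuity of both $Q^*$ and the ReLU network. Once that is in place, the rest of the argument is routine.
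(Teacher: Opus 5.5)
Your proposal is correct and matches the paper's proof: you invoke Theorem~\ref{thm:dnn-Linfty-mean} to get $\mathbb{E}[\|\hat Q_N^{\mathrm{NNs}}-Q^*\|_{L^\infty}^2]\to 0$ and then apply Markov's inequality to the squared sup-norm. Your extra step, which identifies the essential supremum with the pointwise supremum using continuity of $Q^*$ and of the ReLU network together with the positive density lower bound, is a valid refinement of a point the paper asserts with the single word ``Equivalently''.
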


\begin{proof}{Proof}
By Theorem~\ref{thm:dnn-Linfty-mean}, there exists a
choice of network depth \(N_d\) and width \(N_w\) such that
\(
\mathbb E\!\left[\|\hat Q_N^{\mathrm{NNs}} -Q^*\|_{L^\infty}^2\right]\to 0
\qquad \text{as } N \to\infty.
\)
By Markov's inequality, for any \(\epsilon>0\),
\(
\mathbb P\!\left(
\|\hat Q_N^{\mathrm{NNs}}-Q^*\|_{L^\infty}>\epsilon
\right)
\le
\frac{\mathbb E\!\left[\|\hat Q_N^{\mathrm{NNs}}-Q^*\|_{L^\infty}^2\right]}{\epsilon^2}.
\)
Hence,
\(
\mathbb P\!\left(
\|\hat Q_N^{\mathrm{NNs}}-Q^*\|_{L^\infty}>\epsilon
\right)\to 0,
\qquad \forall \epsilon>0,
\)
which implies
\(
\|\hat Q_N^{\mathrm{NNs}}-Q^*\|_{L^\infty}\xrightarrow{P}0.
\)
Equivalently,
\(
\sup_{(\boldsymbol z,\boldsymbol w)\in\mathcal Z\times\mathcal W}
\left|
\hat Q_N^{\mathrm{NNs}} (\boldsymbol z,\boldsymbol w)-Q^*(\boldsymbol z,\boldsymbol w)
\right|
\xrightarrow{P}0.
\) This completes the proof. 
\end{proof}


\section{Parameters tuning}\label{sec:parameters_tuning}
Table~\ref{tab:parameter_tuning} summarizes the hyperparameter tuning procedures before solving the ER-DD-SAA problem. For all models, the
hyperparameters are selected by five-fold cross-validation using negative mean
squared error as the scoring criterion.  For the two-stage facility location
problem, the candidate hidden-layer configurations \((16,32)\) and \((32,32)\)
are excluded to reduce the size of the resulting MIP formulation.

\begin{table}[H]
\centering
\caption{Summary of hyperparameter tuning for regression models}
\label{tab:parameter_tuning}
\begin{tabular}{lll}
\hline
Model & Preprocessing & Tuned hyperparameters \\ \hline
$k$NN & \texttt{StandardScaler} & $k \in \{1,2,3\}$ \\ \hline
CART & None &
\begin{tabular}[t]{@{}l@{}}
\texttt{max\_depth} $\in\{3, 6, 9, 12\}$ \\
\texttt{min\_samples\_split} $\in\{5, 10, 15\}$ \\
\texttt{min\_samples\_leaf} $\in\{2, 4, 6, 8\}$
\end{tabular} \\ \hline
ReLU NNs & \texttt{StandardScaler} &
\begin{tabular}[t]{@{}l@{}}
\texttt{hidden\_layer\_sizes} $\in \{(8),(16),(32),(8,8),(8,16),$ \\
\qquad $(8,32),(16,16),(16,32),(32,32)\}$ \\
\texttt{activation} $=$ \texttt{relu} \\
\texttt{alpha} $\in\{10^{-4},10^{-3},10^{-2}\}$ \\
\texttt{solver} $\in\{\texttt{lbfgs},\texttt{adam}\}$ \\
For \texttt{lbfgs}: \texttt{max\_iter} $=50000$ \\
For \texttt{adam}: \texttt{learning\_rate\_init} $\in\{10^{-3},10^{-2}\}$ \\
For \texttt{adam}: \texttt{early\_stopping} $=$ \texttt{True} \\
For \texttt{adam}: \texttt{validation\_fraction} $=0.1$ \\
For \texttt{adam}: \texttt{max\_iter} $=5000$
\end{tabular} \\ \hline
\end{tabular}
\end{table}

\section{Full ER-DD-SAA Formulations} \label{sec:full_formulation}
\subsection{Full ER-DD-SAA formulations for newsvendor problem with pricing}\label{subsec:newsvendor_full_formulation}
Let \(\overline{P}=20\) and \(\underline{P}=0\) denote valid upper and lower bounds for pricing decision \(p\). Let \(\overline{v}\) denote the maximum potential revenue, estimated as \(\overline{P} \cdot \max_{i \in [N]} y^i\). The variables $s_i^+$ and $s_i^-$ capture the positive and negative parts of 
$p - p^i$, respectively, whereas the binary variable $\Omega_i$ indicates 
whether $p - p^i$ is nonnegative.  The full ER-DD-SAA with \(k\)NN model formulation \eqref{eq:distance_compare_tie} for the newsvendor problem with pricing is given by the following MILP:
\begin{subequations}\label{eq:newsvendor_knn}
    \begin{align}
\min \quad & \frac{1}{N} \sum_{i=1}^N (f \cdot q-v_i + h \cdot h_i + b \cdot b_i )\\
\text{s.t.} \quad & \eqref{eq:random_select_knn_sum_t}--\eqref{eq:comparison_binary} \\
& s^+_{i} - s^-_{i} = p - p^i, \forall i \in [N], \\
& s_i =s^+_{i} + s^-_{i}+ |\boldsymbol{w}^i-\boldsymbol{w}|,\quad \forall i \in [N],\\
& 0\le s^+_{i} \leq M \Omega_{i}, \quad \forall i \in [N],\\
& 0\le s^-_{i} \leq M (1-\Omega_{i}), \quad \forall i \in [N],\\
&p - \overline{P} (1-t_i) \leq \phi_i \leq p - \underline{P} (1-t_i), \quad \forall i \in [N], \label{eq:phi_mccormick_1}\\
& \underline{P} t_i \le \phi_i \leq \overline{P} t_i, \quad \forall i \in [N], \\
&  \frac{1}{k}\sum_{i=1}^N \phi_i y^i + p \cdot \hat{\epsilon}^i \le v_i \le \frac{1}{k}\sum_{i=1}^N \phi_i y^i + p \cdot \hat{\epsilon}^i + \bar{v} (1-g_i) , \quad \forall i \in [N], \label{eq:linearize_revenu1}\\
& 0 \le v_i \le \bar{v} g_i,  \forall i \in [N], \\
& \hat{y} = \frac{1}{k}\sum_{j=1}^N t_j y^j, \label{eq:point_prediction}\\
&\hat{y} + \hat{\epsilon}^i\le D_i \le \hat{y} + \hat{\epsilon}^i + M(1-g_i), \quad \forall i \in [N], \label{eq:projection_prediction} \\
& 0\le D_i \leq M g_i, \quad \forall i \in [N],\\
& h_i \geq q - D_i, \quad \forall i \in [N],\\
& b_i \geq D_i - q, \quad \forall i \in [N],\\
& \underline{P} \leq p \leq \bar{P}, \\
& q \ge 0, \quad q \in \mathbb{Z}, \\
& g_i \in \{0, 1\}, \quad \forall i \in [N], \\
& D_i, h_i, b_i, \phi_i \geq 0, \quad \forall i \in [N], \label{eq:positive_variables}\\
& \Omega_i \in \{0, 1\}, \quad \forall i \in [N].
\end{align}
\end{subequations}
The ER-DD-SAA with $k$NN formulation \eqref{eq:single_level_binary} can be obtained by replacing \eqref{eq:random_select_knn_sum_t}--\eqref{eq:comparison_binary} in Model \eqref{eq:newsvendor_knn} by \eqref{eq:bilevel-1}--\eqref{eq:bilevel-last}.

The ER-DD-SAA with CART \eqref{eq:er_dd_saa_cart} for the newsvendor problem with pricing is given by the following MILP:
\begin{subequations}\label{eq:newsvendor_cart}
    \begin{align}
\min \quad & \frac{1}{N} \sum_{i=1}^N (f \cdot q-v_i + h \cdot h_i + b \cdot b_i )\\
\text{s.t.} \quad & \eqref{eq:ind_region_lowerbound1}--\eqref{eq:cart_binary_decision} \\
& p - \overline{P} (1-R_r) \le  \phi_r \leq p - \underline{P} (1-R_r), \quad \forall r \in \mathcal{N}^{\boldsymbol{w}}, \\
& \underline{P} R_r \le \phi_r \leq \overline{P} R_r, \quad \forall r \in \mathcal{N}^{\boldsymbol{w}}, \\
&  \sum_{r \in \mathcal{N}^{\boldsymbol{w}}} \phi_r \hat{y}^r + p \cdot \hat{\epsilon}^i \le v_i \le \sum_{r \in \mathcal{N}^{\boldsymbol{w}}} \phi_r \hat{y}^r + p \cdot \hat{\epsilon}^i + \bar{v} (1-g_i) , \quad \forall i \in [N],\\
& 0 \le v_i \le \bar{v} g_i,\quad  \forall i \in [N], \\
& \hat{y} = \sum_{r \in \mathcal{N}^{\boldsymbol{w}}} R_r \hat{y}^r, \\
& \eqref{eq:projection_prediction} --  \eqref{eq:positive_variables}.
\end{align}
\end{subequations}

The ER-DD-SAA with ReLU NNs \eqref{eq:nn_erddsaa} for the newsvendor problem with pricing is given by:
\begin{subequations}\label{eq:newsvendor_nn}
    \begin{align}
\min \quad & \frac{1}{N} \sum_{i=1}^N (f \cdot q-v_i + h \cdot h_i + b \cdot b_i )\\
\text{s.t.} \quad & \eqref{eq:nn_first_layer}, \eqref{eq:nn_output_layer}, \eqref{eq:relu_linear_1} -- \eqref{eq:relu_linear_4}\\
& v_i = \max \{ p \cdot (\hat{y} + \hat{\epsilon}^i ), 0\}, \quad \forall i \in [N], \\
& \eqref{eq:projection_prediction} -- \eqref{eq:positive_variables}.
\end{align}
\end{subequations}
which involves the product $p \cdot \hat{y}$ with two continuous decision variables. Therefore, Model \eqref{eq:newsvendor_nn} is an MINLP.

\subsection{Two-Stage Facility Location Problem}\label{subsec:facility_formulation}
In the two-stage facility location problem, the DM decides which facilities to open in the first stage. In the second-stage problem, let $x_{ij}$ denote the shipment quantity from facility $i \in \Gamma_1$ to customer site $j \in \Gamma_2$, and $s_j$ the unmet demand at site $j$. 
For a demand realization $\boldsymbol{Y}(\boldsymbol{z},\boldsymbol{w})$, the second-stage recourse function is:
\begin{subequations}\label{eq:sec_stage}
\begin{align}
    h\big(\boldsymbol{z},\boldsymbol{Y}(\boldsymbol{z},\boldsymbol{w})\big)
    =
    \min_{\boldsymbol{x},\boldsymbol{s}} \quad
    & \sum_{i\in \Gamma_1}\sum_{j\in \Gamma_2} c_{ij}x_{ij}
    + \sum_{j\in \Gamma_2}
    \left(p_j s_j - r_j \boldsymbol{Y}(\boldsymbol{z},\boldsymbol{w}) \right)  
    \label{eq:sec_stage_obj} \\
    \text{s.t.} \quad
    & \sum_{i\in \Gamma_1} x_{ij} + s_j
    \ge \boldsymbol{Y}(\boldsymbol{z},\boldsymbol{w}),
    \quad \forall j\in \Gamma_2, \label{eq:demand_cons} \\
    & \sum_{j\in \Gamma_2} x_{ij}
    \le C_i z_i,
    \quad \forall i\in \Gamma_1, \label{eq:capacity_cons} \\
    & x_{ij}, s_j \ge 0, \quad \forall i\in \Gamma_1, j\in \Gamma_2.
\end{align}
\end{subequations}

In the downstream optimization problem, fixed opening costs and facility capacities are sampled independently as $f_i \sim U (20000,30000)$ and $C_i \sim U(40000,60000)$, respectively, for all $i\in \Gamma_1$. Revenues and penalty costs are sampled as $r_j \sim U(5,10)$ and $p_j \sim U(1,5)$, respectively, for all $j\in \Gamma_2$. Facility and customer locations are generated independently and uniformly over the $20 \times 20$ grid. The unit transportation cost is set to $0.05$ times the Euclidean distance. For simplicity, we assume a common gas price across all customer sites, set to $e=4$. This avoids introducing multiple predictors into the downstream optimization problem. $\chi_1$ and $\chi_2$ are randomly drawn parameters with
$\chi_1 \sim \mathrm{Uniform}(15000,\, 16000)$ serving as the initial demand and
$\chi_2 \sim \mathrm{Uniform}(475,\, 525)$ scaling the joint effect of facility openings and gas price,
and $\chi_3 = 30$ is the maximum number of open facilities.
The resulting ER-DD-SAA problem for \eqref{eq:two_stage_facility_location_problem} is
\begin{subequations}\label{eq:er_dd_saa_facility_full}
\begin{align}
    \min_{\boldsymbol{z}, \boldsymbol{x},\boldsymbol{s}} \quad
    & \sum_{i \in \Gamma_1} f_i z_i + \frac{1}{N}\sum_{n=1}^N \left( \sum_{i\in \Gamma_1}\sum_{j\in \Gamma_2} c_{ij}x_{ijn}
    + \sum_{j\in \Gamma_2}
    \left(p_j s_{jn} - r_j (\hat{Q}_N(\boldsymbol{z},\boldsymbol{w})+\hat{\epsilon}_n) \right)  \right)
    \label{eq:sec_stage_obj_full} \\
    \text{s.t.} \quad
    & \sum_{i\in \Gamma_1} x_{ijn} + s_{jn}
    \ge \hat{Q}_N(\boldsymbol{z},\boldsymbol{w}) + \hat{\epsilon}_n,
    \quad \forall j\in \Gamma_2,\ \forall n \in [N],  \label{eq:demand_cons_full} \\
    & \sum_{j\in \Gamma_2} x_{ijn}
    \le C_i z_i,
    \quad \forall i\in \Gamma_1,\ \forall n \in [N], \label{eq:capacity_cons_full} \\
    & x_{ijn}, s_{jn} \ge 0, \quad \forall i\in \Gamma_1,\ j\in \Gamma_2,\ \forall n \in [N], \\
    & z_i \in \{0,1\},\ \forall i \in \Gamma_1. \label{eq:facility_binary_full}
\end{align}
\end{subequations}

The full formulation for ER-DD-SAA with \(k\)NN for the two-stage facility locaion problem \eqref{eq:two_stage_facility_location_problem} is formulated as:
\begin{subequations}\label{eq:er_dd_saa_facility_full_knn}
\begin{align}
     \min_{\boldsymbol{z}, \boldsymbol{x},\boldsymbol{s}} \quad
    & \sum_{i \in \Gamma_1} f_i z_i + \frac{1}{N}\sum_{n=1}^N \left( \sum_{i\in \Gamma_1}\sum_{j\in \Gamma_2} c_{ij}x_{ijn}
    + \sum_{j\in \Gamma_2}
    \left(p_j s_{jn} - r_j (\hat{Q}_N^{k\text{-}\mathrm{NN}}(\boldsymbol{z},\boldsymbol{w})+\hat{\epsilon}_n) \right)  \right) \\
    \text{s.t.} \quad
    & \eqref{eq:demand_cons_full}--\eqref{eq:facility_binary_full},\  \eqref{eq:random_select_knn_p_norm}--\eqref{eq:comparison_binary}, \\
    & \hat{Q}_N^{k\text{-}\mathrm{NN}}(\boldsymbol{z}, \boldsymbol{w}) = \frac{1}{k} \sum_{l=1}^N t_l y^l.
\end{align}
\end{subequations}

The full formulation for ER-DD-SAA with CART for the two-stage facility locaion problem \eqref{eq:two_stage_facility_location_problem} is formulated as:
\begin{subequations}\label{eq:er_dd_saa_facility_full_cart}
\begin{align}
     \min_{\boldsymbol{z}, \boldsymbol{x},\boldsymbol{s}} \quad
    & \sum_{i \in \Gamma_1} f_i z_i + \frac{1}{N}\sum_{n=1}^N \left( \sum_{i\in \Gamma_1}\sum_{j\in \Gamma_2} c_{ij}x_{ijn}
    + \sum_{j\in \Gamma_2}
    \left(p_j s_{jn} - r_j (\hat{Q}_N^{\text{CART}}(\boldsymbol{z},\boldsymbol{w})+\hat{\epsilon}_n) \right)  \right) \\
    \text{s.t.} \quad
    & \eqref{eq:demand_cons_full}--\eqref{eq:facility_binary_full}, \\ &\eqref{eq:ind_region_lowerbound1}--\eqref{eq:cart_binary_decision}, \\
    & \hat{Q}_N^{\text{CART}}(\boldsymbol{z}, \boldsymbol{w}) = \sum_{r \in \mathcal{N}^{\boldsymbol{w}}} R_r \hat{y}^r.
\end{align}
\end{subequations}

The full formulation for ER-DD-SAA with ReLU NNs for the two-stage facility location problem \eqref{eq:two_stage_facility_location_problem} is given by:
\begin{subequations}\label{eq:er_dd_saa_facility_full_nn}
\begin{align}
     \min_{\boldsymbol{z}, \boldsymbol{x},\boldsymbol{s}} \quad
    & \sum_{i \in \Gamma_1} f_i z_i + \frac{1}{N}\sum_{n=1}^N \left( \sum_{i\in \Gamma_1}\sum_{j\in \Gamma_2} c_{ij}x_{ijn}
    + \sum_{j\in \Gamma_2}
    \left(p_j s_{jn} - r_j (\hat{Q}_N^{\text{NNs}} (\boldsymbol{z},\boldsymbol{w})+\hat{\epsilon}_n) \right)  \right) \\
    \text{s.t.} \quad
    & \eqref{eq:demand_cons_full}--\eqref{eq:facility_binary_full},\\  &\eqref{eq:nn_first_layer}, \eqref{eq:nn_output_layer},  \eqref{eq:relu_linear_1} -- \eqref{eq:relu_linear_4}, \\
    & \hat{Q}_N^{\text{NNs}}(\boldsymbol{z}, \boldsymbol{w}) = \hat{y}.
\end{align}
\end{subequations}
All three formulations above are MILPs.

\section{Additional Numerical Results}\label{sec:additional_results}

\subsection{Comparison between different models for ER-DD-SAA with \(k\)NN in newsvendor problem with pricing}\label{sec:knn_comparison}
We compare three formulations for ER-DD-SAA with \(k\)NN: the pairwise distance formulation~\eqref{eq:distance_compare_tie}, the bilevel reformulation~\eqref{eq:single_level_binary}, and the benchmark \eqref{eq:zhihai}. We set \texttt{NumericFocus}=3 and \texttt{PoolSearchMode}=2 to improve numerical stability. Table~\ref{tb:knn_algorithm} shows that the bilevel reformulation~\eqref{eq:single_level_binary} is generally the fastest and achieves competitive OOS performance. The pairwise distance formulation~\eqref{eq:distance_compare_tie} becomes computationally expensive as \(N\) increases and fails to reach optimality for some larger instances, leading to worse performance. The benchmark formulation~\eqref{eq:zhihai} also becomes substantially slower as \(N\) and \(k\) increase; for \(N=500\) and \(k=2\), only three of five instances are solved to optimality. Overall, the bilevel reformulation~\eqref{eq:single_level_binary} provides the best computational performance among the formulations considered. 

\begin{table}[htbp]
\centering
\caption{Comparison of solution approaches for ER-DD-SAA with $k$NN in newsvendor problem with pricing}
\label{tb:knn_algorithm}
\resizebox{\textwidth}{!}{%
\begin{tabular}{c|c|rrr|rrr|rrr}
\hline
\multirow{2}{*}{$N$}
& \multirow{2}{*}{$k$}
& \multicolumn{3}{c|}{Model~\eqref{eq:distance_compare_tie}}
& \multicolumn{3}{c|}{Bilevel~\eqref{eq:single_level_binary}}
& \multicolumn{3}{c}{Benchmark~\eqref{eq:zhihai}} \\
\cline{3-5} \cline{6-8} \cline{9-11}
& & \multicolumn{1}{c}{Time (s)} & \multicolumn{1}{c}{IS Cost} & \multicolumn{1}{c|}{OOS Cost}
& \multicolumn{1}{c}{Time (s)} & \multicolumn{1}{c}{IS Cost} & \multicolumn{1}{c|}{OOS Cost}
& \multicolumn{1}{c}{Time (s)} & \multicolumn{1}{c}{IS Cost} & \multicolumn{1}{c}{OOS Cost} \\ \hline
100 & \multirow{3}{*}{1}
& 11.36     & -58,296.68 & -40,969.97
& \textbf{1.85} & -58,296.68 & -40,969.97
& 5.12      & -58,296.68 & -40,969.97 \\
300 &
& 1,195.00  & -56,870.09 & -50,834.58
& \textbf{35.42} & -56,870.09 & -50,834.58
& 173.13    & -56,870.09 & -50,834.58 \\
500 &
& 10,429.55 & -41,363.26 & -39,421.56
& \textbf{268.00} & -56,419.17 & -51,993.29
& 1,190.71  & -56,419.17 & -51,992.29 \\ \hline
100 & \multirow{3}{*}{2}
& 28.98     & -57,190.38 & -45,501.79
& \textbf{6.54}  & -57,190.38 & -45,501.79
& 8.92      & -57,190.38 & -45,501.79 \\
300 &
& 3,666.56  & -57,056.02 & -50,407.23
& 318.23    & -57,056.02 & -50,407.23
& \textbf{294.46} & -57,056.02 & -50,407.23 \\
500 &
& 10,821.29 & -5,426.97  & -3,914.72
& \textbf{684.96} & -56,133.16 & -52,633.38
& 6,991.87  & -53,869.78 & -51,107.66 \\ \hline
\end{tabular}%
}
\end{table}

\subsection{Comparison between different models for ER-DD-SAA with CART in newsvendor problem with pricing}\label{sec:cart_comparison}
Table~\ref{tab:cart_comparison} compares ER-DD-SAA with CART under three optimization formulations: the proposed general formulation~\eqref{eq:er_dd_saa_cart}, the single-dimensional formulation~\eqref{eq:special_cart}, and the direct implementation by the Gurobi ML package \citep{gurobiMLfeatures}. Since \(p\) is continuous and formulations in~\eqref{eq:er_dd_saa_cart} and ~\eqref{eq:special_cart} rely on Big-M constraints, we set the Gurobi parameter \texttt{NumericFocus}=$3$  to enhance numerical stability and use default settings for all other parameters. Table~\ref{tab:cart_comparison} shows that all three formulations obtain nearly identical IS and OOS costs. For larger sample sizes, the proposed MILP formulations~\eqref{eq:er_dd_saa_cart} and \eqref{eq:special_cart} are much faster than Gurobi ML. Among the two proposed formulations, Model~\eqref{eq:er_dd_saa_cart} is generally faster, suggesting that the additional constraints in Model~\eqref{eq:special_cart} can offset the benefit of using fewer variables.

\begin{table}[htbp]
\centering
\caption{Computational comparisons for ER-DD-SAA with CART in newsvendor problem with pricing.}
\label{tab:cart_comparison}
\resizebox{\textwidth}{!}{%
\begin{tabular}{c|c|rrr|rrr|rrr}
\hline
\multirow{2}{*}{$N$} & \multirow{2}{*}{$|\mathcal{N}^{w}|$}
& \multicolumn{3}{c|}{Model~\eqref{eq:er_dd_saa_cart}}
& \multicolumn{3}{c|}{Model~\eqref{eq:special_cart}}
& \multicolumn{3}{c}{Gurobi ML} \\
\cline{3-5} \cline{6-8} \cline{9-11}
& & \multicolumn{1}{c}{IS Cost} & \multicolumn{1}{c}{OOS Cost} & \multicolumn{1}{c|}{Time (s)}
& \multicolumn{1}{c}{IS Cost} & \multicolumn{1}{c}{OOS Cost} & \multicolumn{1}{c|}{Time (s)}
& \multicolumn{1}{c}{IS Cost} & \multicolumn{1}{c}{OOS Cost} & \multicolumn{1}{c}{Time (s)} \\ \hline
1000 & 43.60 & -58,043.45 & -51,500.96 & 19.09           & -58,043.53 & -51,500.91 & 18.42  & -58,043.45 & -51,500.96 & \textbf{3.46} \\
3000 & 49.00 & -57,546.12 & -52,500.47 & \textbf{85.78}  & -57,546.12 & -52,501.47 & 107.71 & -57,546.12 & -52,501.47 & 315.25 \\
5000 & 41.80 & -57,462.58 & -52,148.86 & \textbf{204.40} & -57,462.58 & -52,148.86 & 222.27 & -57,462.58 & -52,148.86 & 911.60 \\
7000 & 30.60 & -56,680.22 & -53,481.89 & \textbf{308.63} & -56,680.24 & -53,481.88 & 419.53 & -56,680.22 & -53,481.89 & 1,590.57 \\ \hline
\end{tabular}%
}
\end{table}

\subsection{Performance of ER-DD-SAA with ReLU NNs in newsvendor problem with pricing}\label{sec:nn_comparison_newsvendor}

For each sample size \(N\), a ReLU network is trained using \texttt{MLPRegressor} in \texttt{scikit-learn} and embedded into ER-DD-SAA using Gurobi ML. Table~\ref{tab:nn_sample_size_comparison} shows that the OOS cost improves from \(N=1000\) to \(N=3000\) and remains relatively stable thereafter. Both training and optimization times increase with \(N\), with optimization time rising sharply beyond \(N=3000\). Thus, larger training data set provide limited additional OOS improvement while substantially increasing computational cost.

\begin{table}[htbp]
\centering
\caption{Computational comparison of ER-DD-SAA with ReLU NNs in newsvendor problem with pricing.}
\label{tab:nn_sample_size_comparison}
\setlength{\tabcolsep}{24pt}
\begin{tabular}{lrrrr}
\hline
$N$
& \multicolumn{1}{c}{IS Cost}
& \multicolumn{1}{c}{OOS Cost}
& \multicolumn{1}{c}{Train}
& \multicolumn{1}{c}{Opt.}
\\
\hline
1000 & -55,413.35 & -55,136.74 & 47.91  & 45.82 \\
3000 & -55,458.69 & -55,230.75 & 112.95 & 185.21 \\
5000 & -55,443.53 & -55,214.43 & 182.77 & 691.68 \\
7000 & -55,448.63 & -55,120.99 & 259.18 & 1,380.22 \\
\hline
\end{tabular}
\end{table}

\subsection{Comparisons between different models for ER-DD-SAA with CART in two-stage facility location}\label{sec:facility_cart_comparison}
We compare the general formulation~\eqref{eq:er_dd_saa_cart}, the one-dimensional formulation~\eqref{eq:special_cart}, and the direct implementation using the Gurobi ML package~\citep{gurobiMLfeatures} in the two-stage facility location problem.
As shown in Table~\ref{tab:cart_comparison_facility}, all three formulations yield the same IS and OOS costs. Formulation~\eqref{eq:er_dd_saa_cart} is consistently the fastest. Although formulation~\eqref{eq:special_cart} uses fewer variables, it introduces more constraints. The results therefore suggest that the reduction in variables does not offset the additional constraint burden in this setting.

\begin{table}[htbp]
\centering
\caption{Computational comparison for ER-DD-SAA with CART in two-stage facility location problem.}
\label{tab:cart_comparison_facility}
\resizebox{\textwidth}{!}{%
\begin{tabular}{c|rrr|rrr|rrr}
\hline
\multirow{2}{*}{$N$} & \multicolumn{3}{c|}{Model \eqref{eq:er_dd_saa_cart}}
& \multicolumn{3}{c|}{Model \eqref{eq:special_cart}}
& \multicolumn{3}{c}{Gurobi ML} \\
\cline{2-4} \cline{5-7} \cline{8-10}
& \multicolumn{1}{c}{IS Cost} & \multicolumn{1}{c}{OOS Cost} & \multicolumn{1}{c|}{Time (s)}
& \multicolumn{1}{c}{IS Cost} & \multicolumn{1}{c}{OOS Cost} & \multicolumn{1}{c|}{Time (s)}
& \multicolumn{1}{c}{IS Cost} & \multicolumn{1}{c}{OOS Cost} & \multicolumn{1}{c}{Time (s)} \\ \hline
1000 & -2,003,034.42 & -1,853,490.67 & \textbf{49.21}  & -2,003,034.42 & -1,853,490.67 & 82.36    & -2,003,034.42 & -1,853,490.67 & 207.30   \\
2000 & -2,001,948.46 & -1,855,618.67 & \textbf{90.62}  & -2,001,948.46 & -1,855,618.67 & 758.63   & -2,001,948.46 & -1,855,618.67 & 898.03   \\
3000 & -2,000,245.25 & -1,856,664.56 & \textbf{94.64}  & -2,000,245.25 & -1,856,664.56 & 1,538.98 & -2,000,245.25 & -1,856,664.56 & 1,491.95 \\
4000 & -1,998,478.01 & -1,855,570.57 & \textbf{136.69} & -1,998,478.01 & -1,855,570.57 & 2,455.50 & -1,998,478.01 & -1,855,570.57 & 3,231.30 \\ \hline
\end{tabular}%
}
\end{table}






\end{document}